\newtheorem{theorem}{Theorem}[section]
\newtheorem{lemma}[theorem]{Lemma}
\newtheorem{prop}[theorem]{Proposition}
\newtheorem{defn}[theorem]{Definition}
\newtheorem{cor}[theorem]{Corollary}
\title{A quantum cluster algebra of Kronecker type and the dual canonical basis}
\author{Philipp Lampe}
\begin{document}

\maketitle
\begin{abstract}
The article concerns the dual of Lusztig's canonical basis of a subalgebra of the positive part $U_q(\mathfrak{n})$ of the universal enveloping algebra of a Kac-Moody Lie algebra of type $A_1^{(1)}$. The examined subalgebra is associated with a terminal module $M$ over the path algebra of the Kronecker quiver via an Weyl group element $w$ of length four. 

Gei\ss -Leclerc-Schr\"{o}er attached to $M$ a category $\mathcal{C}_M$ of nilpotent modules over the preprojective algebra of the Kronecker quiver together with an acyclic cluster algebra $\mathcal{A}(\mathcal{C}_M)$. The dual semicanonical basis contains all cluster monomials. By construction, the cluster algebra $\mathcal{A}(\mathcal{C}_M)$ is a subalgebra of the graded dual of the (non-quantized) universal enveloping algebra $U(\mathfrak{n})$.

We transfer to the quantized setup. Following Lusztig we attach to $w$ a subalgebra $U_q^+(w)$ of $U_q(\mathfrak{n})$. The subalgebra is generated by four elements that satisfy straightening relations; it degenerates to a commutative algebra in the classical limit $q=1$.  The algebra $U_q^+(w)$ possesses four bases, a PBW basis, a canonical basis, and their duals. We prove recursions for dual canonical basis elements. The recursions imply that every cluster variable in $\mathcal{A}(\mathcal{C}_M)$ is the specialization of the dual of an appropriate canonical basis element. Therefore, $U_q^+(w)$ is a quantum cluster algebra in the sense of Berenstein-Zelevinsky. Furthermore, we give explicit formulae for the quantized cluster variables and for expansions of products of dual canonical basis elements. 
\end{abstract}

\section{Introduction}
Cluster algebras have been introduced by Fomin and Zelevinsky (see \cite{FZ}, \cite{FZ2}) as a device for studying {\it dual canonical bases} and {\it total positivity}. 

A {\it cluster algebra} of rank $n$ (for some $n \in \mathbb{N}$) is a subalgebra of the field $\mathbb{Q}(x_1,\ldots,x_n)$ of rational functions in $n$ variables. Its generators are called {\it cluster variables}. Each cluster variable belongs to several overlapping {\it clusters}. Every cluster, and hence every cluster variable, is obtained from an initial cluster by a sequence of {\it mutations}. A mutation replaces an element in a cluster by an explicitly defined rational function in the variables of that cluster. We refer to \cite{FZ} for definitions and to \cite{FZ4} for good survey about cluster algebras. Note that a cluster algebra is {\it commutative}. One feature of cluster algebras is the {\it Laurent phenomenon} asserting that every cluster variable is a Laurent polynomial in the initial cluster variables. 

Cluster algebras quickly gained popularity in various branches of mathematics, for example Poisson geometry (e.g. \cite{GSV}), higher Teichm\"{u}ller theory (e.g. \cite{FG}), combinatorics (e.g. \cite{MP}), integrable systems (e.g. \cite{FZ5}), etc. 

In this article we come back to one of the original motivations for introducing cluster algebras, namely a combinatorial approach to the study of {\it dual canonical bases} of quantized universal enveloping algebras.  Canonical bases have been defined by Lusztig in \cite{Lu2} and \cite{Lu1}. Lusztig uses the algebraic geometry of varieties of representations of a quiver and especially the theory of perverse sheaves to construct canonical bases. The dual canonical basis is the basis adjoint to the canonical basis with respect to a bilinear form. Lusztig (see \cite[Theorem 14.2.3]{Lu1}) also showed that there is a combinatorial characterization for the canonical basis elements. The corresponding version for dual canonical bases is described in \cite{lnt}, \cite{Re}, and \cite{Le2}; in our example the characterization is Theorem \ref{combprop}. Fomin and Zelevinsky (see \cite{FZ}) conjecture that every cluster monomial (i.e. a monomial in the cluster variables of a single cluster) is a non-deformed dual canonical basis element. The conjecture has only been proven in very few cases. 

Let $Q$ be a finite quiver without oriented cycles. Furthermore, let $\mathbb{C}Q$ be its {\it path algebra} and let $\Lambda$ be the associated {\it preprojective algebra}. Gei\ss -Leclerc-Schr\"oer (\cite[Section 4]{GLS2}) attached to every {\it terminal} $\mathbb{C}Q$-module $M$ a subcategory $\mathcal{C}_M \subseteq {\rm nil}(\Lambda)$ of nilpotent $\Lambda$-modules. The category $\mathcal{C}_M$ is a {\it Frobenius category}. The stable category $\underline{\mathcal{C}}_M$ is triangulated by a theorem of Happel (\cite[Section 2.6]{H}). Gei\ss -Leclerc-Schr\"oer (\cite[Theorem 11.1]{GLS2}) showed that if $M=I \oplus \tau(I)$ where $I$ is the direct sum of all indecomposable injective representations, then there is an equivalence of triangulated categories $\underline{\mathcal{C}}_M \simeq \mathcal{C}_Q$ between $\underline{\mathcal{C}}_M$ and the {\it cluster category} $\mathcal{C}_Q$ as defined in \cite{BMRRT} to be the orbit category $\mathcal{D}^b\left({\rm mod}(kQ)\right)/\tau_{\mathcal{D}}^{-1} \circ [1]$. Keller (\cite{K}) proved that $\mathcal{C}_Q$ is indeed triangulated. 

Furthermore, Gei\ss -Leclerc-Schr\"oer (\cite[Section 4]{GLS2}) constructed for every $\mathcal{C}_M$ a cluster algebra $\mathcal{A}(\mathcal{C}_M)$; it is a subalgebra of the graded dual of the universal enveloping algebra of the positive part of the corresponding Lie algebra, i.e. $\mathcal{A}(\mathcal{C}_M) \subseteq U(\mathfrak{n})_{gr}^{\ast}$. \cite{GLS2} also proved that all cluster monomials are in the dual of Lusztig's semicanonical basis. There is an isomorphism between $U(\mathfrak{n})$ and an algebra $\mathcal{M}$ of $\mathbb{C}$-valued functions on $\Lambda$. We refer to \cite{GLS2} for a precise definition of $\mathcal{M}$. It is generated by functions $d_{\textbf{i}}$ that map a $\Lambda$-module $X$ to the Euler characteristic of the flag variety of $X$ of type $\textbf{i}$. Prominent elements in $\mathcal{A}(\mathcal{C}_M)$ are (under the described isomorphism) the $\delta$-functions of certain rigid $\Lambda$-modules. Additionally, there is an isomorphism $\mathcal{A}(\mathcal{C}_M) \simeq \mathbb{C}[N(w)]$ where $\mathbb{C}[N(w)]$ is the coordinate ring of the unipotent subgroup $N(w)$ attached to the adaptable Weyl group element $w$ of $M$. Therefore, we may call the $\mathcal{C}_M$ a {\it categorification} of the cluster algebra $\mathcal{A}(\mathcal{C}_M)$. 

Our aim is an investigation of the quantized setup for a specific quiver. We consider the special case where $Q$ is the {\it Kronecker quiver}. The type of the corresponding Lie algebra is $A_1^{(1)}$. As a terminal $\mathbb{C}Q$-module $M$ we choose the direct sum of the two indecomposable injective modules and their Auslander-Reiten translates. We describe the cluster algebra $\mathcal{A}(\mathcal{C}_M)$ in Section \ref{sec:int}; it is a cluster algebra of rank two and it is the simplest example of a cluster algebra with infinitely many cluster variables. The adaptable Weyl group element associated with $M$ is $w=s_1s_2s_1s_2$. In Section \ref{quant} and Section \ref{sec:sub} we study the subalgebra $U_q^{+}(w) \subseteq U_q(\mathfrak{n})$ using Lusztig's $T$-automorphisms of $U_q(\mathfrak{g})$. According to  Lusztig (\cite{Lu1}) there is a canonical basis of $U_q^{+}(w)$; it has been studied by Leclerc (\cite{Le1}, \cite{Le2}). The purpose of this article is to describe the dual canonical basis of $U_q^{+}(w)$ more detailed. The main results are the following. Theorem \ref{thm:clu} asserts that every cluster variable of $\mathcal{A}(\mathcal{C}_M)$ is the specialization of an appropriate dual canonical basis element. The main ingredient for the proof is Theorem \ref{thm:recursion} which provides recursions for dual canonical basis element so that they can be computed explicitly. We also give formulae for these dual canonical basis elements. The formulae are quantized versions of the known formulae for the coefficients of cluster variables in type $A_1^{(1)}$ from \cite{CZ}. Furthermore, we show that two adjacent quantized cluster variables quasi-commute, i.e. they are commutative up to a power of the deformation parameter $q$. This is together with the product expansions of Corollary \ref{cor} part of the study of the multiplicative behaviour of dual canonical bases. Furthermore, we prove a quantum exchange relation for the quantized cluster variables so that $U_q^+(w)$ becomes a {\it quantum cluster algebra} in the sense of Berenstein-Zelevinsky (\cite{BZ2}).

{\bf Acknowledgements.} The article is part of my Ph.D. studies at the University of Bonn. I would like to thank the Bonn International Graduate School in Mathematics (BIGS) for their support. I would like to thank my advisor Jan Schr\"oer for continuous encouragement. I am also grateful to Bernard Leclerc for valuable discussion and explanations.

\section{The cluster algebra structure associated with a terminal $kQ$-module}
\label{sec:int}
\subsection{Representation theory of the Kronecker quiver}
The quiver $Q=(Q_0,Q_1)$ with vertex set $Q_0=\left\lbrace 0,1 \right\rbrace$ and arrow set $Q_1=\left\lbrace a_1,a_2\right\rbrace$ with $a_1,a_2 \colon 0 \to 1$ is called the {\it Kronecker quiver}. 
\begin{figure}[h!]
\begin{center}
\begin{tikzpicture}
\node[draw,circle,fill=blue!25] at (8,2) (k1) {0};
\node[draw,circle,fill=blue!25] at (8,0) (k2) {1};
\path[->,thick,shorten <=2pt,shorten >=2pt,>=stealth'] (k1) edge [bend left] node[right] {$a_2$} (k2);
\path[->,thick,shorten <=2pt,shorten >=2pt,>=stealth'] (k1) edge [bend right] node[left] {$a_1$} (k2);
\end{tikzpicture}
\end{center}
\caption{The Kronecker quiver}\label{fig:kron}
\end{figure}

Let $k$ be a field. The category ${\rm rep}_k(Q)$ of finite-dimensional representations of $Q$ can be identified with the category ${\rm mod}(kQ)$ of finite-dimensional modules over the {\it path algebra} $kQ$. (For more information on representations of quivers see for example \cite{CB}.)

The Kronecker quiver is a  {\it tame} quiver. There are infinitely many indecomposable $kQ$-modules which are classified as {\it preprojective}, {\it preinjective} or {\it regular}. A part of the preinjective component of the {\it Auslander-Reiten quiver} of ${\rm mod}(kQ)$ is shown in Figure \ref{fig:AR}. The modules are represented by their dimension vectors. For example, the dimension vector
\begin{center}  
\begin{tikzpicture}
\node[draw,rectangle,rounded corners,fill=blue!25] at (-7,0) (c) {$\begin{matrix}2 \\ 1 \end{matrix}$};
\node at (-4,0) (c) {admits a representation};
\node at (1.5,-0.1) (c) {.};
\node at (0,0.75) (k1) {$k^2$};
\node at (0,-0.75) (k2) {$k$};
\path[->,thick] (k1) edge [bend left] node[right] {$(0 \ 1)$} (k2);
\path[->,thick] (k1) edge [bend right] node[left] {$(1 \ 0)$} (k2);
\end{tikzpicture}
\end{center} The maps are given by $1 \times 2$ matrices if we choose a basis of the vector spaces. The solid arrows display the space of {\it irreducible maps}; the dotted arrows display the {\it Auslander-Reiten translation} $\tau$. 

\begin{figure}[h!]
\begin{center}
\begin{tikzpicture}
\node at (-5,1) (inf) {{\large $\cdots$}};
\node[draw,rectangle,rounded corners,fill=blue!25] at (0,2) (a) {$\begin{matrix}4 \\ 3 \end{matrix}$};
\node[draw,rectangle,rounded corners,fill=blue!25] at (2,0) (b) {$\begin{matrix}3 \\ 2 \end{matrix}$};
\node[draw,rectangle,rounded corners,fill=blue!25] at (4,2) (c) {$\begin{matrix}2 \\ 1 \end{matrix}$};
\node[draw,rectangle,rounded corners,fill=blue!25] at (6,0) (d) {$\begin{matrix}1 \\ 0 \end{matrix}$};
\node[draw,rectangle,rounded corners,fill=red!50] at (-2,0) (e) {$\begin{matrix}5 \\ 4 \end{matrix}$};
\node[draw,rectangle,rounded corners,fill=red!50] at (-4,2) (f) {$\begin{matrix}6 \\ 5 \end{matrix}$};
\path[->,thick,shorten <=2pt,shorten >=2pt] (a) edge [bend left=10] node {} (b);
\path[->,thick,shorten <=2pt,shorten >=2pt] (a) edge [bend left=-10] node {} (b);
\path[->,thick,shorten <=2pt,shorten >=2pt] (b) edge [bend left=10] node {} (c);
\path[->,thick,shorten <=2pt,shorten >=2pt] (b) edge [bend left=-10] node {} (c);
\path[->,thick,shorten <=2pt,shorten >=2pt] (c) edge [bend left=10] node {} (d);
\path[->,thick,shorten <=2pt,shorten >=2pt] (c) edge [bend left=-10] node {} (d);
\path[->,thick,shorten <=2pt,shorten >=2pt] (e) edge [bend left=10] node {} (a);
\path[->,thick,shorten <=2pt,shorten >=2pt] (e) edge [bend left=-10] node {} (a);
\path[->,thick,shorten <=2pt,shorten >=2pt] (f) edge [bend left=-10] node {} (e);
\path[->,thick,shorten <=2pt,shorten >=2pt] (f) edge [bend left=10] node {} (e);
\path[->,thick,shorten <=2pt,shorten >=2pt,dotted] (c) edge node {} (a);
\path[->,thick,shorten <=2pt,shorten >=2pt,dotted] (d) edge node {} (b);
\path[->,thick,shorten <=2pt,shorten >=2pt,dotted] (b) edge node {} (e);
\path[->,thick,shorten <=2pt,shorten >=2pt,dotted] (a) edge node {} (f);
\end{tikzpicture}
\end{center}
\caption{A part of the preinjective component of ${\rm mod}(kQ)$}\label{fig:AR}
\end{figure}
We consider the direct sum $M=I_0 \oplus I_1 \oplus \tau (I_0) \oplus \tau (I_1)$ of the four blue modules. These four modules are the two indecomposable injective modules $I_0$ and $I_1$ asssociated with the vertices $0$ and $1$ and their Auslander-Reiten translates, $\tau (I_0)$ and $\tau (I_1)$. The module $M$ is a {\it terminal $kQ$-module} in the sense of Gei\ss -Leclerc-Schr\"{o}er (\cite[Section 2.2]{GLS2}). According to \cite[Theorem 3.3]{GLS2} the terminal $kQ$-module $M$ gives rise to a cluster algebra structure. To explain this theorem let us introduce some notation.

\subsection{The preprojective algebra} Let $\Lambda$ be the {\it preprojective algebra}; it is defined as $$\Lambda=k\overline{Q} / (c).$$ Here $\overline{Q}$ denotes {\it double quiver} of $Q$ which is by definition given by a vertex set $\overline{Q}_0=Q_0$ and an arrow set $\overline{Q}_1=Q_1 \cup \left\lbrace a_1^{*},a_2^{*}\right\rbrace$ with two additional arrows $a_1^{*},a_2^{*} \colon 1 \to 0$. The ideal $(c)$ is the two-sided ideal generated by the element $$c=\sum_{a \in Q_1}\left(a^{*}a-aa^{*} \right) \in k\overline{Q}.$$ The algebra $\Lambda$ is infinite dimensional, since $Q$ is not an orientation of a Dynkin diagram. There is a {\it restriction functor} $$\pi_Q \colon {\rm mod}(\Lambda) \to {\rm mod}(kQ)$$ given by forgetting the linear maps associated with $a_1^*$ and $a_2^*$ in a $\Lambda$-module i.e. a representation of $\overline{Q}$ such that the linear maps satisfy relations corresponding to the ideal $(c)$. Ringel (\cite[Theorem B]{R}) proved that the category ${\rm mod}(\Lambda)$ is isomorphic to a category called $C(1,\tau)$. The objects in the category $C(1,\tau)$ are pairs $(X,f)$ consisting of a $kQ$-module $X$ and a $kQ$-module homomorphism $f \colon X \to \tau (X)$ from $X$ to its translate $\tau (X)$; morphisms in $C(1,\tau)$ from a pair $(X,f)$ to a pair $(Y,g)$ are given by a $kQ$-module homomorphism $h \colon X \to Y$ for which the diagram 
\begin{center}
\hspace{0cm}
\begin{xy}
  \xymatrix{
    X \ar[r]^{h} \ar[d]^{f} &  Y  \ar[d]^{g}  \\
    \tau (X) \ar[r]^{\tau(h)} &  \tau (Y)  
  }
\end{xy}
\end{center}
commutes. Using this correspondence Gei\ss -Leclerc-Schr\"{o}er (\cite[Section 7.1]{GLS2}) constructed for every $i=0,1$ and natural numbers $a \leq b$ a $\Lambda$-module 
\begin{align*}
T_{i,[a,b]}=\left(I_{i,[a,b]},e_{i,[a,b]} \right) 
\end{align*}
where $I_{i,[a,b]}=\bigoplus_{j=a}^b \tau^j(I_i)$ and the map $$e_{i,[a,b]} \colon I_{i,[a,b]} \to \tau \left( I_{i,[a,b]}\right)=\bigoplus_{j=a+1}^{b+1} \tau^j(I_i) $$ is identity on every $\tau^j(I_i)$ for $a+1 \leq j \leq b$ and zero otherwise. We are interested in the six $\Lambda$-modules $T_{i,[a,b]}$ for $i=0,1$ and $0 \leq a,b \leq 1$. We display the modules by their graded dimension vectors.
\begin{figure}
\begin{center}
\begin{tikzpicture}
\node[rectangle,rounded corners] at (0,0) (a) {$\begin{matrix}0\end{matrix}$};
\node [left] at (a.west) {$T_{0,[0,0]}=$};
\node[rectangle,rounded corners] at (6,0) (b) {$\begin{matrix}0&&0 \\ &1& \end{matrix}$};
\node [left] at (b.west) {$T_{1,[0,0]}=$};
\node[rectangle,rounded corners] at (0,-1.5) (c) {$\begin{matrix}0&&0&&0 \\ &1&&1& \end{matrix}$};
\node [left] at (c.west) {$T_{0,[1,1]}=$};
\node[rectangle,rounded corners] at (6,-1.5) (d) {$\begin{matrix}0&&0&&0&&0 \\ &1&&1&&1& \end{matrix}$};
\node [left] at (d.west) {$T_{1,[1,1]}=$};
\node[rectangle,rounded corners] at (0,-3.5) (e) {$\begin{matrix}0&&0&&0 \\ &1&&1& \\ &&0&&\end{matrix}$};
\node [left] at (e.west) {$T_{0,[0,1]}=$};
\node[rectangle,rounded corners] at (6,-3.5) (f) {$\begin{matrix}0&&0&&0&&0 \\ &1&&1&&1& \\ &&0&&0&& \\ &&&1&&& \end{matrix}$};
\node [left] at (f.west) {$T_{1,[0,1]}=$};
\end{tikzpicture}
\end{center}
\caption{The modules $T_{i,[a,b]}$}\label{tmod}
\end{figure}

All six modules are {\it rigid} and {\it nilpotent}. Recall that a $\Lambda$-module $T$ is said to be rigid if ${\rm Ext}_{\Lambda}^1(T,T)=0$.

\subsection{The $\delta$-functions and the cluster algebra structure}
In this subsection let $k=\mathbb{C}$. The $\delta$-functions of the modules $T_{i,[a,b]}$ satisfy {\it generalized determinantal identities} (see \cite[Theorem 18.1]{GLS2}). Let $U_0,U_1,U_2$ and $U_3$ be the $\delta$-functions of $T_{0,[0,0]},T_{1,[0,0]},T_{0,[1,1]}$ and $T_{1,[1,1]}$ respectively; let $P_0$ and $P_1$ be the $\delta$-functions of $T_{0,[0,1]}$ and $T_{1,[0,1]}$, respectively. The determinantal identities in this case read as follows 
\begin{align}
P_0 &= U_2U_0 - U_1^2, \\
P_1 &= U_3U_1 - U_2^2.
\end{align}
These relations may be regarded as first exchange relations of a cluster algebra, called $\mathcal{A}(\mathcal{C}_M)$ in \cite{GLS2}, with initial cluster $(U_0,U_1,P_0,P_1)$, initial exchange matrix visualized by the the quiver in Figure \ref{fig:in}
\begin{figure}[h!]
\begin{center}
\begin{tikzpicture}
\node[draw,rectangle,rounded corners,fill=red!25] at (0,2) (a) {$P_1$};
\node[draw,rectangle,rounded corners,fill=red!25] at (2,0) (b) {$P_0$};
\node[draw,rectangle,rounded corners] at (4,2) (c) {$U_1$};
\node[draw,rectangle,rounded corners] at (6,0) (d) {$U_0$};
\path[->,thick,shorten <=2pt,shorten >=2pt] (a) edge [bend left=10] node {} (b);
\path[->,thick,shorten <=2pt,shorten >=2pt] (a) edge [bend left=-10] node {} (b);
\path[->,thick,shorten <=2pt,shorten >=2pt] (b) edge [bend left=10] node {} (c);
\path[->,thick,shorten <=2pt,shorten >=2pt] (b) edge [bend left=-10] node {} (c);
\path[->,thick,shorten <=2pt,shorten >=2pt] (c) edge [bend left=10] node {} (d);
\path[->,thick,shorten <=2pt,shorten >=2pt] (c) edge [bend left=-10] node {} (d);
\path[->,thick,shorten <=2pt,shorten >=2pt] (c) edge node {} (a);
\path[->,thick,shorten <=2pt,shorten >=2pt] (d) edge node {} (b);
\end{tikzpicture}
\end{center}
\caption{Initial cluster} \label{fig:in}
\end{figure}
and frozen variables $P_0$ and $P_1$. The frozen variables $P_0$ and $P_1$ may be regarded as coefficients of the cluster algebra, see \cite{FZ3}.  

The mutations at $U_0$ and $U_1$ yield to the new cluster variables $U_2$ and $U_3$,
\begin{align*} U_2=\frac{U_1^2+P_0}{U_0}, && U_3=\frac{U_2^2+P_1}{U_1}.
\end{align*}
The associated quiver encodes the exchange relation; it gets also mutated. The mutated quivers are shown below.

\begin{figure}
\begin{center}
\begin{tikzpicture}
\node[draw,rectangle,rounded corners,fill=red!25] at (0,2) (a) {$P_1$};
\node[draw,rectangle,rounded corners,fill=red!25] at (1.5,0) (b) {$P_0$};
\node[draw,rectangle,rounded corners] at (3,2) (c) {$U_1$};
\node[draw,rectangle,rounded corners] at (4.5,0) (d) {$U_2$};
\path[->,thick,shorten <=2pt,shorten >=2pt] (a) edge [bend left=10] node {} (b);
\path[->,thick,shorten <=2pt,shorten >=2pt] (a) edge [bend left=-10] node {} (b);
\path[->,thick,shorten <=2pt,shorten >=2pt] (d) edge [bend left=10] node {} (c);
\path[->,thick,shorten <=2pt,shorten >=2pt] (d) edge [bend left=-10] node {} (c);
\path[->,thick,shorten <=2pt,shorten >=2pt] (c) edge node {} (a);
\path[->,thick,shorten <=2pt,shorten >=2pt] (b) edge node {} (d);
\node[draw,rectangle,rounded corners,fill=red!25] at (7,2) (aa) {$P_1$};
\node[draw,rectangle,rounded corners,fill=red!25] at (8.5,0) (bb) {$P_0$};
\node[draw,rectangle,rounded corners] at (10,2) (cc) {$U_3$};
\node[draw,rectangle,rounded corners] at (11.5,0) (dd) {$U_2$};
\path[->,thick,shorten <=2pt,shorten >=2pt] (aa) edge [bend left=10] node {} (bb);
\path[->,thick,shorten <=2pt,shorten >=2pt] (aa) edge [bend left=-10] node {} (bb);
\path[->,thick,shorten <=2pt,shorten >=2pt] (dd) edge [bend left=5] node {} (aa);
\path[->,thick,shorten <=2pt,shorten >=2pt] (dd) edge [bend left=-5] node {} (aa);
\path[->,thick,shorten <=2pt,shorten >=2pt] (cc) edge [bend left=10] node {} (dd);
\path[->,thick,shorten <=2pt,shorten >=2pt] (cc) edge [bend left=-10] node {} (dd);
\path[->,thick,shorten <=2pt,shorten >=2pt] (aa) edge node {} (cc);
\path[->,thick,shorten <=2pt,shorten >=2pt] (bb) edge node {} (dd);
\end{tikzpicture}
\end{center}
\caption{The cluster after mutation at $U_0$ and $U_1$, consecutively}
\end{figure}

If we specialize the coefficients $P_0=P_1=1$ we get a coefficient-free cluster algebra of rank two with initial exchange matrix $B=\begin{pmatrix}
 0 & -2\\
2 & 0
\end{pmatrix}$; the specialized cluster variables $U_n (n \in \mathbb{Z})$ satisfy the recursion $$U_{n+1}U_{n-1}=U_n^2+1$$ for every $n \in \mathbb{Z}$. In \cite[Theorem 4.1]{CZ} Caldero and Zelevinsky proved that
\begin{align}
\label{bla}
U_{n+2} = \frac{1}{U_1^{n}U_0^{n+1}}\sum_{k,l} \genfrac(){0cm}{0}{n-k}{l}\genfrac(){0cm}{0}{n+1-l}{k}U_1^{2k}U_0^{2l} 
\end{align}
where the sum is taken over all $k,l \in \mathbb{N}$ such that either $k+l \leq n$ or $(k,l)=(n+1,0)$. Musiker and Propp (\cite{MP}) gave nice combinatorial descriptions of the coeffcients. The author (\cite{La}) gives a different formula for the coefficients. Sz\'ant\'o (\cite{Sz}) shows that a quantized version of the formula is related to the number of points in a Grassmannian over a finite field $\mathbb{F}_{q}$ in the context of {\it Hall algebras}.

Equation (\ref{bla}) illustrates the Fomin-Zelevinsky's {\it Laurent phenomenon} (\cite{FZ}): Every cluster variable $U_n (n \in \mathbb{Z})$ is a Laurent polynomial in $U_1$ and $U_0$.

Caldero and Zelevinsky (\cite{CZ}) derive formula (\ref{bla}) using the {\it Caldero-Chapoton map} from \cite{CC} and computing Euler characteristics of Grassmannians of quiver representations. Later Zelevinsky (\cite{Z}) gave a simpler proof for formula (\ref{bla}). He observed that the expression $$T=\frac{1+U_n^2+U_{n+1}^2}{U_nU_{n+1}}$$ is invariant of $n$. Thus, the non-linear exchange relation $U_{n+1}U_{n-1}=U_n^2+1$ may be replaced by a linear three-term recursion
\begin{align}
\label{threeterm}
U_{n+1}=TU_{n}-U_{n-1}, & \quad (n \in \mathbb{Z}), 
\end{align}
when we define $$T=\frac{1+U_1^2+U_2^2}{U_1U_2}.$$ Note that $T=U_3U_0-U_2U_1$.

In analogy to these formulae the cluster variables $U_n$ of the cluster algebra with initial exchange matrix $B=\begin{pmatrix}
 0 & 2\\
-2 & 0\\
0 & -1\\
1 & 0
\end{pmatrix}$, initial seed $(U_1,U_2,P_0,P_1)$, and frozen variables $P_0$ and $P_1$ satisfy the exchange relation
\begin{align}
\label{nonquantex}
U_{n+1}U_{n-1}=U_n^2+P_1^{n-1}P_0^{n-4}
\end{align}
for $n \geq 4$. The cluster variables are explicitly given by
\begin{align}\label{nonquant}
U_{n+3} = \frac{1}{U_1^{n+1}U_2^n}\sum_{k,l} \genfrac(){0cm}{0}{n-k}{l}\genfrac(){0cm}{0}{n+1-l}{k}P_1^{n+1-k}U_2^{2k}U_1^{2l}P_0^{n-l}.
\end{align} 
(For reasons that will become clear later on we have switched our initial seed from $(U_0,U_1)$ to $(U_1,U_2)$.)

Gei\ss -Leclerc-Schr\"{o}er realized the cluster algebra $\mathcal{A}(\mathcal{C}_M)$ as a subalgebra of the graded dual  $U(\mathfrak{n})_{gr}^{*}$ of the positive part $\mathfrak{n}$ of the universal enveloping algebra of the Kac-Moody Lie algebra of type $A_1^{(1)}$. 

A striking feature is {\it polynomiality}: Every $U_n$ is actually a polynomial in $U_3,U_2,U_1,U_0$. For example, one may check that $U_4=U_3^2U_0-2U_3U_2U_1+U_2^3$. A priori the cluster variables $U_n$ are only rational functions in $U_3,U_2,U_1,U_0$. If we plug $P_0=U_2U_0-U_1^2$ and $P_1=U_3U_1-U_2^2$ in equation (\ref{nonquant}) and use the binomial theorem we see that
\begin{align}\label{hugesum}
U_{n+3} = \sum_{a,b} c_{n,a,b} U_3^aU_2^{n+2-2a+b}U_1^{n-1-2b+a}U_0^b 
\end{align} 
with coefficients 
\begin{align}
\label{coeffs}
 c_{n,a,b}=\sum_{k,l}(-1)^{k+l+a+b+1} \genfrac(){0cm}{0}{n-k}{l}\genfrac(){0cm}{0}{n+1-l}{k}\genfrac(){0cm}{0}{n+1-k}{a}\genfrac(){0cm}{0}{n-l}{b}.
\end{align}
Polynomiality yields to the combinatorially non-trivial binomial identity $c_{n,a,b}=0$ if either $n+2-2a+b<0$ or $n-1-2b+a<0$.

\subsection{Mutation of rigid modules}
\label{rigid}
In the following subsection we describe the mutation of rigid $\Lambda$-modules. We use the abbreviations $T_0=T_{0,[0,0]}$, $T_1=T_{1,[0,0]}$, $P_0=T_{0,[0,1]}$, and $P_1=T_{1,[0,1]}$; recall that the $\Lambda$-modules on the right hand sides are displayed in Figure \ref{tmod}. 

The $\Lambda$-module $T=T_0 \oplus T_1 \oplus P_0 \oplus P_1$ is rigid. Moreover, $T$ is {\it maximal rigid}, i.e. every indecomposable $\Lambda$-module $\tilde{T}$ for which $T \oplus \tilde{T}$ is rigid is isomorphic to a direct summand of $T$. The quiver of $T$ is given in Figure \ref{fig:start}. Note the similarity with Figure \ref{fig:in}.

\begin{figure}[h!]
\begin{center}
\begin{tikzpicture}
\node[draw,rectangle,rounded corners,fill=red!25] at (0,2) (a) {$P_1$};
\node[draw,rectangle,rounded corners,fill=red!25] at (2,0) (b) {$P_0$};
\node[draw,rectangle,rounded corners] at (4,2) (c) {$T_1$};
\node[draw,rectangle,rounded corners] at (6,0) (d) {$T_0$};
\path[->,thick,shorten <=2pt,shorten >=2pt] (a) edge [bend left=10] node {} (b);
\path[->,thick,shorten <=2pt,shorten >=2pt] (a) edge [bend left=-10] node {} (b);
\path[->,thick,shorten <=2pt,shorten >=2pt] (b) edge [bend left=10] node {} (c);
\path[->,thick,shorten <=2pt,shorten >=2pt] (b) edge [bend left=-10] node {} (c);
\path[->,thick,shorten <=2pt,shorten >=2pt] (c) edge [bend left=10] node {} (d);
\path[->,thick,shorten <=2pt,shorten >=2pt] (c) edge [bend left=-10] node {} (d);
\path[->,thick,shorten <=2pt,shorten >=2pt] (c) edge node {} (a);
\path[->,thick,shorten <=2pt,shorten >=2pt] (d) edge node {} (b);
\end{tikzpicture}
\end{center}
\caption{The quiver of $T$} \label{fig:start}
\end{figure}

The dimension of the sixteen homomorphism spaces between the direct summands of $T$ are put in a matrix $C_T$ called {\it Cartan matrix}. In our example we have $C_T=\begin{pmatrix}1&2&3&4\\0&1&2&3\\1&2&4&6\\0&1&2&4\end{pmatrix}$, where rows and columns are ordered in accordance with the order $T_0$, $T_1$, $P_0$, $P_1$. For example ${\rm dim}\left({\rm Hom}(T_0,T_0)\right) =1$, ${\rm dim}\left({\rm Hom}(T_1,T_0)\right)=2$, ${\rm dim}\left({\rm Hom}(P_0,T_0)\right) =3$, etc.

There is a mutation process for maximal rigid $\Lambda$-modules analogous to the mutation process for cluster algebras. We refer to \cite{GLS1} for a detailed exposition. (\cite{GLS1} work with Dynkin quivers $Q$, but same procedures apply to the general setup as well.) The modules $P_0$ and $P_1$ are projective-injective and cannot be mutated; they correspond to frozen variables in cluster algebra. Both $T_0$ and $T_1$ can be mutated. Let us describe the mutation for $T_0$. There is a (unique up to isomorphism) $\Lambda$-module $T_2$ such that $T_2 \ncong T_0$ and the module $T_2 \oplus T/T_0 \cong T_2 \oplus T_1 \oplus P_0 \oplus P_1$ is again maximal rigid; two short exact sequences 
\begin{center}
\begin{tikzpicture}
\node at (0,0) (a) {$0$};
\node at (1.5,0) (b) {$T_0$};
\node at (3,0) (c) {$P_0$};
\node at (4.5,0) (d) {$T_2$};
\node at (6,0) (e) {$0$};
\path[->,shorten <=2pt,shorten >=2pt] (a) edge node {} (b);
\path[->,shorten <=2pt,shorten >=2pt] (b) edge node {} (c);
\path[->,shorten <=2pt,shorten >=2pt] (c) edge node {} (d);
\path[->,shorten <=2pt,shorten >=2pt] (d) edge node {} (e);
\node at (0,-0.5) (aa) {$0$};
\node at (1.5,-0.5) (bb) {$T_2$};
\node at (3,-0.5) (cc) {$T_1 \oplus T_1$};
\node at (4.5,-0.5) (dd) {$T_0$};
\node at (6,-0.5) (ee) {$0$};
\path[->,shorten <=2pt,shorten >=2pt] (aa) edge node {} (bb);
\path[->,shorten <=2pt,shorten >=2pt] (bb) edge node {} (cc);
\path[->,shorten <=2pt,shorten >=2pt] (cc) edge node {} (dd);
\path[->,shorten <=2pt,shorten >=2pt] (dd) edge node {} (ee);
\end{tikzpicture}
\end{center}
characterize $T_2$. The appearence of $P_0 $ and $T_1 \oplus T_1$ as middle terms is an incarnation of the fact that there is one arrow form $T_0$ to $P_0$ in the quiver of $T$ and two arrows form $T_1$ to $T_0$.  We see that $T_2=T_{0,[1,1]}$. 

Denote the mutated module by $T'=\mu_{T_0}(T)=T_2 \oplus T_1 \oplus P_0 \oplus P_1$. The Cartan matrix and the quiver of $T'$ are shown in Figure \ref{fig:onemut}. The quiver of $T'$ is obtained from the previous quiver by quiver mutation. A combinatorial recursion for the Cartan matrices is given in \cite[Proposition 7.5]{GLS1}.
\begin{figure}[h!]
\begin{center}
\begin{tikzpicture}
\node at (-4,1) (a) {$C_{T'}=\begin{pmatrix}1&0&1&2\\2&1&2&3\\3&2&4&6\\2&1&2&4\end{pmatrix}$};
\node[draw,rectangle,rounded corners,fill=red!25] at (0,2) (a) {$P_1$};
\node[draw,rectangle,rounded corners,fill=red!25] at (2,0) (b) {$P_0$};
\node[draw,rectangle,rounded corners] at (3,2) (c) {$T_1$};
\node[draw,rectangle,rounded corners] at (5,0) (d) {$T_2$};
\path[->,thick,shorten <=2pt,shorten >=2pt] (a) edge [bend left=10] node {} (b);
\path[->,thick,shorten <=2pt,shorten >=2pt] (a) edge [bend left=-10] node {} (b);
\path[->,thick,shorten <=2pt,shorten >=2pt] (d) edge [bend left=10] node {} (c);
\path[->,thick,shorten <=2pt,shorten >=2pt] (d) edge [bend left=-10] node {} (c);
\path[->,thick,shorten <=2pt,shorten >=2pt] (c) edge node {} (a);
\path[->,thick,shorten <=2pt,shorten >=2pt] (b) edge node {} (d);
\end{tikzpicture}
\end{center}
\caption{The Cartan matrix and the quiver of $T'$} \label{fig:onemut}
\end{figure}

Mutation of rigid modules is involutive just as mutation of cluster algebras, i.e. $\mu_{T_2}(T')=T$. Since $P_0$ and $P_1$ are not mutable, the only non-trivial further mutation is $T''=\mu_{T_1}(T')$. Using short exact sequences we see that $T''=T_2 \oplus T_3 \oplus P_0 \oplus P_1$ with $T_3=T_{1,[1,1]}$. Iteration of the mutation process yields to a sequence of $\Lambda$-modules $(T_n)_{n \in \mathbb{N}}$. Similarly to cluster variables, one obtains a sequence $(T_n)_{n \in \mathbb{N}^{-}}$ of $\Lambda$-modules by starting with mutation of $T$ at $T_1$.
\begin{figure}[h!]
\begin{center}
\begin{tikzpicture}
\node at (-4,1) (a) {$C_{T'}=\begin{pmatrix}1&2&1&2\\0&1&0&1\\3&4&4&6\\2&3&2&4\end{pmatrix}$};
\node[draw,rectangle,rounded corners,fill=red!25] at (0,2) (a) {$P_1$};
\node[draw,rectangle,rounded corners,fill=red!25] at (2,0) (b) {$P_0$};
\node[draw,rectangle,rounded corners] at (3,2) (c) {$T_3$};
\node[draw,rectangle,rounded corners] at (5,0) (d) {$T_2$};
\path[->,thick,shorten <=2pt,shorten >=2pt] (a) edge [bend left=0] node {} (b);
\path[->,thick,shorten <=2pt,shorten >=2pt] (a) edge [bend left=-10] node {} (b);
\path[->,thick,shorten <=2pt,shorten >=2pt] (c) edge [bend left=10] node {} (d);
\path[->,thick,shorten <=2pt,shorten >=2pt] (c) edge [bend left=-0] node {} (d);
\path[->,thick,shorten <=3pt,shorten >=3pt] (d) edge [bend left=5] node {} (a);
\path[->,thick,shorten <=3pt,shorten >=3pt] (d) edge [bend left=-5] node {} (a);
\path[->,thick,shorten <=2pt,shorten >=2pt] (a) edge node {} (c);
\path[->,thick,shorten <=2pt,shorten >=2pt] (b) edge node {} (d);
\end{tikzpicture}
\end{center}
\caption{The Cartan matrix and the quiver of $T'$} \label{fig:twomut}
\end{figure}

Using \cite[Proposition 7.5]{GLS1} and one proves by mathematical induction that for $n \geq 3$ the Cartan matrix of $T_n \oplus T_{n+1} \oplus P_0 \oplus P_1$ is $$\begin{pmatrix}(2n-5)^2 & (2n-4)^2-2 & 3n-9 & 5n-14 \\ (2n-4)^2 & (2n-3)^3 & 3n-6 & 5n-9 \\ 5n-11 & 5n-6 & 4 & 6 \\ 3n-6 & 3n-3 & 2 &4 \end{pmatrix}.$$ Especially, we have ${\rm dim}\left( {\rm End}(T_n) \right) = (2n-5)^2$.

\subsection{Bases of the cluster algebra $\mathcal{A}(\mathcal{C}_M)$}
\label{bases}
Several authors studied various bases of the cluster algebra $\mathcal{A}(\mathcal{C}_M)$. In this subsection we describe the {\it semicanonical basis} of Caldero-Zelevinsky (\cite{CZ}),  the {\it canonical basis} of Sherman-Zelevinsky (\cite{SZ}), and the {\it dual semicanonical basis} of Gei\ss -Leclerc-Schröer (\cite{GLS2}). To define these bases we introduce the {\it normalized Chebyshev polynomials of the first and second kind}.

\begin{defn}
Define a sequence $\left(T_k\right)_{k=0}^{\infty}$ of polynomials $T_k \in \mathbb{Z}[X]$ recursively by $T_0=2$, $T_1=X$, and $T_{k+1}=XT_k-T_{k-1}$ for $k \geq 1$; another sequence $\left(S_k\right)_{k=0}^{\infty}$ of polynomials $S_k \in \mathbb{Z}[X]$ is defined recursively by $S_0=1$, $S_1=X$, and $S_{k+1}=XS_k-S_{k-1}$ for $k \geq 1$. 
\end{defn}

The polynomial $T_k$ is called the $k^{th}$ {\it normalized Chebyshev polynomial of the first kind}; the polynomial $S_k$ is called the $k^{th}$ {\it normalized Chebyshev polynomial of the second kind}. Figure \ref{cheb} displays the Chebyshev polynomials with lowest indices.

\begin{figure}
\begin{center}
\begin{tabular}{|c||c|c|c|c|c|}\hline
$k$&$0$&$1$&$2$&$3$&$4$ \\ \hline\hline 
$T_k$&$2$&$x$&$x^2-2$&$x^3-3x$&$x^4-4x^2+2$ \\ \hline
$S_k$&$1$&$x$&$x^2-1$&$x^3-2x$&$x^4-3x^2+1$ \\ \hline
\end{tabular}
\end{center}
\caption{Normalized Chebyshev polynomials of the first and second kind}\label{cheb}
\end{figure}

A monomial $U_{n+1}^{a_1}U_n^{a_2}P_1^{a_3}P_0^{a_4}$ (with $a_1,a_2,a_3,a_4 \in \mathbb{N}$) in the cluster variables of a {\it single} cluster is called a {\it cluster monomial.} Let $\underline{Mono}$ be the set of all cluster monomials. Put $z=U_3U_0-U_2U_1=\frac{P_1P_0+P_1U_1^2+P_0U_2^2}{U_1U_2}$ and
\begin{itemize}
 \item $\underline{\mathcal{B}}=\underline{Mono}\cup\left\lbrace (P_1P_0)^{\frac{k}{2}}T_k\left(z(P_1P_0)^{-\frac{1}{2}}\right)  \vert k \geq 1\right\rbrace$, 
 \item $\underline{\mathcal{S}}=\underline{Mono}\cup\left\lbrace (P_1P_0)^{\frac{k}{2}}S_k\left(z(P_1P_0)^{-\frac{1}{2}}\right)  \vert k \geq 1\right\rbrace$, 
 \item $\underline{\Sigma}=\underline{Mono}\cup\left\lbrace  z^k  \vert k \geq 1 \right\rbrace$.
\end{itemize}
The elements $s_k=(P_1P_0)^{\frac{k}{2}}S_k\left(z(P_1P_0)^{-\frac{1}{2}}\right) \in \underline{\mathcal{S}}$ obey the relation 
\begin{align}s_{k+1}=zs_k-P_1P_0s_{k-1} \label{nq1}
\end{align}
 for $k \geq 2$ which resembles the relation 
\begin{align}U_{k+1}=zU_k-P_1P_0U_{k-1} \label{nq2}
\end{align}
for $k \geq 4$ for cluster variables. 

\begin{theorem}[\cite{CZ,SZ,GLS2}] 
Each of $\underline{\mathcal{B}}$, $\underline{\mathcal{S}}$, and $\underline{\Sigma}$ is a $\mathbb{Q}$-basis of $\mathcal{A}(\mathcal{C}_M)$.
\end{theorem}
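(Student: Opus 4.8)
This statement compiles three results: $\underline{\mathcal{B}}$ is a basis by Sherman--Zelevinsky (\cite{SZ}), $\underline{\mathcal{S}}$ is a basis by Caldero--Zelevinsky (\cite{CZ}), and $\underline{\Sigma}$ is a basis by Gei\ss--Leclerc--Schr\"oer (\cite{GLS2}), so the shortest route is simply to quote them. The common mechanism, which I would carry out as a uniform proof, is the following. By the Laurent phenomenon (\cite{FZ}) the cluster algebra $\mathcal{A}(\mathcal{C}_M)$ --- the rank-two cluster algebra with the exchange matrix of Figure~\ref{fig:in} and frozen variables $P_0,P_1$ --- embeds into $R=\mathbb{Q}[U_1^{\pm1},U_2^{\pm1},P_0,P_1]$. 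Because $S_k$ and $T_k$ only contain powers of the variable congruent to $k$ modulo $2$, the elements $z^k$, $s_k$ and $t_k:=(P_1P_0)^{k/2}T_k\!\left(z(P_1P_0)^{-1/2}\right)$ are genuine polynomials in $z$ and $P_1P_0$, hence lie in $\mathcal{A}(\mathcal{C}_M)$, and both $s_k$ and $t_k$ are monic of degree $k$ in $z$.

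\textbf{Linear independence.} For a generic linear functional $\ell$ on the lattice of $(U_1,U_2)$-exponents, attach to every nonzero element of $R$ the Laurent monomial on which $\ell$ is extremal. Using the explicit Laurent expansion (\ref{nonquant}) of $U_{n+3}$ for $n\ge0$, its mirror image for cluster variables of negative index, the displayed expansion of $z$, and the recursion (\ref{nq1}), one checks that each cluster monomial and each $z^k$ has a unique extremal monomial, that distinct such elements have distinct extremal monomials (equivalently, the corresponding $g$-vectors are distinct), and that $s_k$ and $t_k$ share the extremal monomial of $z^k$ (since both are monic of degree $k$ in $z$ and the extremal monomial of $z^j$ strictly dominates that of $z^i(P_1P_0)^m$ for $i<j$). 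An extremal-term cancellation argument then excludes any nontrivial $\mathbb{Q}$-linear relation among the elements of $\underline{\mathcal{B}}$, of $\underline{\mathcal{S}}$, or of $\underline{\Sigma}$; in particular no $s_k$ and no $t_k$ lies in $\mathrm{span}_{\mathbb{Q}}(\underline{Mono})$.

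\textbf{Spanning.} This is the substantive step. By polynomiality (the discussion surrounding (\ref{hugesum})) the algebra $\mathcal{A}(\mathcal{C}_M)$ is generated by $U_0,U_1,U_2,U_3$. One develops a standard-monomial theory: using the linear recursion (\ref{nq2}) and the exchange relations $U_{n-1}U_{n+1}=U_n^2+P_1^{\,n-1}P_0^{\,n-4}$, every monomial in the generators is rewritten as a $\mathbb{Q}$-linear combination of cluster monomials and powers of $z$, by induction on the pairwise distances of the cluster-variable factors and on the $z$-degree; what must be verified is that this straightening never leaves the set $\underline{Mono}\cup\{z^k\mid k\ge1\}$, the delicate cases being products of pairwise non-adjacent cluster variables and products of $z$ with a cluster monomial. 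This is precisely the content of \cite{SZ} (and of the Euler-characteristic computations of \cite{CZ} and \cite{GLS2}), which I would invoke rather than redo with the frozen exponents; it yields $\mathrm{span}_{\mathbb{Q}}(\underline{\Sigma})=\mathcal{A}(\mathcal{C}_M)$.

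\textbf{Passing between the three sets.} The algebra $\mathcal{A}(\mathcal{C}_M)$ carries the natural $\mathbb{Z}^2$-grading (by $g$-vectors, equivalently by dimension vectors in the categorification) for which every cluster monomial is homogeneous, $z$ is homogeneous, and $\deg(P_1P_0)=2\deg z$ --- the last because $U_1U_2$ occurs among the monomials of $z$ and $P_1P_0=zU_1U_2-P_1U_1^2-P_0U_2^2$. Hence $z^j(P_1P_0)^m$ is homogeneous of degree $(j+2m)\deg z$, and in that degree the basis $\underline{\Sigma}$ contains the single imaginary element $z^{j+2m}$; therefore modulo $\mathrm{span}_{\mathbb{Q}}(\underline{Mono})$ one has $z^j(P_1P_0)^m\equiv\lambda_{j,m}z^{j+2m}$ for a scalar $\lambda_{j,m}$. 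Expanding $s_k$ and $t_k$ in powers of $z$ and collecting, one finds $s_k\equiv\mu_k z^k$ and $t_k\equiv\nu_k z^k$ modulo $\mathrm{span}_{\mathbb{Q}}(\underline{Mono})$, with $\mu_k,\nu_k$ the $z^k$-coefficients of $s_k,t_k$ in the basis $\underline{\Sigma}$, which are nonzero by the linear-independence step. So the transition matrices relating the imaginary parts of $\underline{\Sigma}$, $\underline{\mathcal{S}}$ and $\underline{\mathcal{B}}$ are diagonal and invertible, and $\underline{\mathcal{S}}$, $\underline{\mathcal{B}}$ are bases as well. The one real obstacle is the spanning step --- verifying the straightening stays within $\underline{Mono}\cup\{z^k\}$; the linear independence is a routine leading-term computation given (\ref{nonquant}) and (\ref{nq1}), and once it is in hand the passage between the three bases is a short grading argument.
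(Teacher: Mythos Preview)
The paper does not prove this theorem at all: it is stated with attribution to \cite{CZ,SZ,GLS2} and left without argument. You correctly recognise this in your first sentence, and the simplest ``proof'' is indeed to cite those papers, exactly as the authors do.

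Your sketch goes well beyond the paper. The outline is sound: linear independence by a leading-term argument, spanning by the straightening results of \cite{SZ} (which you rightly flag as the substantive step and defer), and then a grading argument to pass from $\underline{\Sigma}$ to $\underline{\mathcal S}$ and $\underline{\mathcal B}$. The last of these is a clean observation not in the paper: since $\deg(P_1P_0)=2\deg z$ for the root-lattice grading, each $s_k$ and $t_k$ is homogeneous of degree $k\deg z$, and the only imaginary element of $\underline{\Sigma}$ in that degree is $z^k$; combined with $s_k,t_k\notin\operatorname{span}(\underline{Mono})$ this gives invertible (indeed diagonal) transition matrices on the imaginary parts. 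Two minor points: the grading you want is the root-lattice grading inherited from $U(\mathfrak n)^*_{gr}$ (this is the dimension-vector grading in the categorification), not the $g$-vector grading --- these are different lattices and your parenthetical ``equivalently'' is not quite right, though the argument is unaffected. And your claim that distinct cluster monomials have distinct extremal monomials is true here but is itself a theorem (distinct $g$-vectors for cluster monomials), so in a self-contained write-up you would either verify it directly from \eqref{nonquant} in this rank-two case or cite it.
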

The basis $\underline{\mathcal{B}}$ is known as the {\it canonical basis} of $\mathcal{A}(\mathcal{C}_M)$, $\underline{\mathcal{S}}$ is the {\it semicanonical basis} of $\mathcal{A}(\mathcal{C}_M)$, and $\underline{\Sigma}$ is the {\it dual semicanonical basis} of $\mathcal{A}(\mathcal{C}_M)$.

\section{Quantization}
\label{quant}
\subsection{The quantized universal enveloping algebra $U_q(\mathfrak{g})$ of type $A_1^{(1)}$}
Let $C=(a_{ij})_{1 \leq i,j \leq 2}=\begin{pmatrix}
 2 & -2\\
-2 & 2
\end{pmatrix}$ be the {\it Cartan matrix} associated with the Kronecker quiver $Q$. Let $\mathfrak{g}$ be the {\it Kac-Moody Lie algebra} of type $C$; it admits a triangular
decomposition $\mathfrak{g}=\mathfrak{n}_{-} \oplus \mathfrak{h} \oplus \mathfrak{n}$. The Lie algebra $\mathfrak{n}$ is called the {\it positive part} of $\mathfrak{g}$. 

The Lie algebra $\mathfrak{g}$ is studied by its {\it root lattice}. There are two {\it simple roots}, $\alpha_1$ and $\alpha_2$. By $\Delta^+$ we denote the set of {\it positive roots}. There are two kinds of positive roots called {\it real} and {\it imaginary} roots, i.e. $\Delta^+=\Delta^+_{re}\cup\Delta^+_{im}$ with $\Delta^+_{re}=\left\lbrace (n+1)\alpha_1+n\alpha_2 \colon n \in \mathbb{N} \right\rbrace \cup \left\lbrace n\alpha_1+(n+1)n\alpha_2 \colon n \in \mathbb{N} \right\rbrace$ and $\Delta^+_{im}=\left\lbrace n\alpha_1+n\alpha_2 \colon n \in \mathbb{N}^+ \right\rbrace$. Note that the real positive roots correspond to dimension vectors which admit a unique irreducible $kQ$-module. Examples are displayed in Figure \ref{fig:AR}. They are also the {\it g-vectors} of the cluster algebra of type $A_1^{(1)}$ introduced above, see \cite{FZ3}.

Let $W$ be the {\it Weyl group} of type $\mathfrak{g}$; it is generated by two simple reflections $s_1,s_2 \in W$ which act on the simple roots by
\begin{align*} s_1(\alpha_1)&=-\alpha_1, && s_1(\alpha_2) = \alpha_2 +2 \alpha_1, \\
 s_2(\alpha_1)&=\alpha_1 +2 \alpha_2, && s_2(\alpha_2) = -\alpha_2. 
\end{align*}
 
In Section \ref{sec:int} we mentioned the {\it universal enveloping algebra} $U(\mathfrak{n})$ of $\mathfrak{n}$. It is the $\mathbb{C}$-algebra generated by $E_1,E_2$ subject to the relations 
\begin{align*}
E_1^3E_2-3E_1^2E_2E_1+3E_1E_2E_1^2-E_2E_1^3=0,\\ 
E_2^3E_1-3E_2^2E_1E_2+3E_2E_1E_2^2-E_1E_2^3=0.
\end{align*}

These relations are called {\it Serre relations}.

It is known that $U(\mathfrak{n})$ can be endowed with a comultiplication $\Delta \colon U(\mathfrak{n}) \to U(\mathfrak{n}) \otimes U(\mathfrak{n})$ defined by $\Delta(x) = 1\otimes x + x \otimes 1$ for all $x \in \mathfrak{n}$ (using the canonical embedding $\iota \colon \mathfrak{n} \to U(\mathfrak{n})$) and an antipode so that $U(\mathfrak{n})$ becomes a cocommutative Hopf algebra. It is graded by the root lattice. The graded dual of $U(\mathfrak{n})$, the Hopf algebra $U(\mathfrak{n})_{gr}^*$, is a commutative $\mathbb{C}$-algebra.

By introducing a deformation parameter $q$ one can construct a series of Hopf algebras $U_q(\mathfrak{n})$ that are not cocommutative but specialize to $U(\mathfrak{n})$ if we set $q=1$. To describe this construction we introduce quantized integers and quantized binomial coefficients. 

Remarkably, the Hopf algebra $U_q(\mathfrak{n}) \cong U_q(\mathfrak{n})_{gr}^{*}$ is a self-dual Hopf algebra whereas $U(\mathfrak{n}) \not\cong U(\mathfrak{n})^{*}$.

\begin{defn}
For an integer $k$, let $$\left[k \right] =\frac{q^k-q^{-k}}{q-q^{-1}}\in \mathbb{Q}\left( q\right) $$ denote the quantum integer. For two integers $n,k$, let $$\genfrac[]{0cm}{0}{n}{k}=\frac{\left[n \right] \left[n-1 \right] \cdots \left[n -k+1\right]}{\left[k \right] \left[k-1 \right] \cdots \left[1\right]} \in \mathbb{Q}\left( q\right) $$ denote the quantum binomial coefficient. Furthermore, for a natural number $k$, let $$[k]!=[k][k-1]\cdots[1]$$ denote the quantized factorial. 
\end{defn}
Both $[k]$ and $\genfrac[]{0cm}{0}{n}{k}$ are actually Laurent polynomials in $q$. Note that $[k]=k$, $\genfrac[]{0cm}{0}{n}{k}=\genfrac(){0cm}{0}{n}{k}$, and $[k]!=k!$ if we specialize $q=1.$ Examples of quantum integers include
\begin{align*}
 [0]&=0, && &[1]=& \ 1, \\ [2]&=q+q^{-1}, && &[3]=& \ q^2+1+q^{-2}.
\end{align*}

Note that some authors, for example \cite{KC}, use a different notation for quantum integers. Note also that quantum binomial coefficients, just as ordinary binomial coefficients, are defined for negative integers $n,k$ as well. For example, $\genfrac[]{0cm}{0}{-2}{1}=-q-q^{-1}$, but $\genfrac[]{0cm}{0}{n}{k}=0$ if $k<0$. 

Quantized integers are related with the normalized Chebyshev polynomials $S_k$, for $k \geq 0$, of the second kind from Subsection \ref{bases}. More precisely, there holds $[k]=S_{k-1}([2])=S_{k-1}(q+q^{-1})$ for $k \geq 1$.

\begin{defn}
The quantized enveloping algebra $U_q(\mathfrak{g})$ is a $\mathbb{Q}(q)$-algebra generated by
\begin{align}
 E_i, (i=1,2), &&  F_i, (i=1,2), &&  K_i,K_i^{-1}, (i=1,2), \nonumber
\end{align}
subject to the following relations
\begin{align}
&K_iK_j =K_jK_i, && (i \neq j) \\
&K_iK_i^{-1} = K_i^{-1}K_i = 1, && (i=1,2)  \\
&K_iE_jK_i^{-1} = q^{a_{ij}}E_j, && (1 \leq i,j \leq 2) \label{nice}\\
&K_iF_jK_i^{-1} = q^{-a_{ij}}F_j, && (1 \leq i,j \leq 2) \\
&E_iF_j -F_jE_i = \delta_{ij}\frac{K_i-K_i^{-1}}{q-q^{-1}},&& (1 \leq i,j \leq 2) \label{commu}\\
&E_1^3E_2-[3]E_1^2E_2E_1+[3]E_1E_2E_1^2-E_2E_1^3=0, && \label{qs1}\\
&E_2^3E_1-[3]E_2^2E_1E_2+[3]E_2E_1E_2^2-E_1E_2^3=0, && \label{qs2} \\
&F_1^3F_2-[3]F_1^2F_2F_1+[3]F_1F_2F_1^2-F_2F_1^3=0, &&\\
&F_2^3F_1-[3]F_2^2F_1F_2+[3]F_2F_1F_2^2-F_1F_2^3=0, &&
\end{align}
where $\delta_{ij}$ is the Kronecker delta function.
\end{defn}
The subalgebra generated by $E_1$ and $E_2$ is called the {\it quantized enveloping algebra} $U_q(\mathfrak{n})$. The only relations in $U_q(\mathfrak{n})$ remain (\ref{qs1}) and (\ref{qs2}). These are called {\it quantized Serre relations}. The algebra $U_q(\mathfrak{n})$ specializes to $U(\mathfrak{n})$ in the limit $q=1$.

\subsection{The Poincar\'{e}-Birkhoff-Witt basis}
To construct a basis of $U_q(\mathfrak{n})$ Lusztig (\cite[Chapter 37]{Lu1}) constructs several {\it T-automorphisms}. We will use the notation $E_i^{(k)}=E_i^k/[k]!$ for $i=1,2$ and the similar notation for $F_i$. For every $i=1,2$ define
\begin{itemize}
\item $T_i(E_i)=-K_i^{-1}F_i$, 
\item $T_i(F_i)=-E_iK_i$,
\item $T_i(E_j)=\sum_{r+s=2}(-1)^{r}q^{-r}E_i^{(r)}E_jE_i^{(s)}$ for $j \neq i$,
\item $T_i(F_j)=\sum_{r+s=2}(-1)^{r}q^{r}F_i^{(s)}F_jF_i^{(r)}$ for $j \neq i$,
\item $T_i(K_j)=K_jK_i^{-a_{ij}}$ for $i=1,2$.
\end{itemize}
Lusztig shows that $T_i$ can be extended to an algebra homomorphism $$T_i \colon U_q(\mathfrak{g}) \to U_q(\mathfrak{g}).$$ (It is denoted $T'_{i,-1}$ in Lusztig's book \cite[Chapter 37]{Lu1} where the variable $q$ is called $v$ instead.) Furthermore, $T_i$ is an algebra automorphism. The images of the generators under the inverse $T_i^{-1}$ are given by (see \cite[Chapter 37]{Lu1}) 
\begin{itemize}
\item $T_i^{-1}(E_i)=-F_iK_i$, 
\item $T_i^{-1}(F_i)=-K_i^{-1}E_i$,
\item $T_i^{-1}(E_j)=\sum_{r+s=2}(-1)^{r}q^{-r}E_i^{(s)}E_jE_i^{(r)}$ for $j \neq i$,
\item $T_i^{-1}(F_j)=\sum_{r+s=2}(-1)^{r}q^{r}F_i^{(r)}F_jF_i^{(s)}$ for $j \neq i$,
\item $T_i^{-1}(K_j)=K_jK_i^{-a_{ij}}$ for $i=1,2$.
\end{itemize}

The automorphisms $T_i$ are sometimes called {\it braid operators}. This terminology comes from the fact that operators $T_i$ can be defined for abitrary quivers $Q$ and that they satisfy braid group relations. In this particular case it means that $T_1T_2$ has infinite order because $s_1s_2 \in W$ has infinite order. 

For every reduced expression $w=s_{i_1}s_{i_2}\cdots s_{i_k}$ of a Weyl group element $w \in W$ Lusztig (see \cite[Proposition 40.2.1]{Lu1}) constructs a {\it Poincar\'{e}-Birkhoff-Witt basis}. 

\begin{theorem}[Lusztig] 
Let $w \in W$ and let $s_{i_1}s_{i_2}\cdots s_{i_k}$ be a reduced expression for $w$. Then all elements $$p_i(\textbf{c}):= (T_{i_1} \circ T_{i_2} \circ \cdots \circ T_{i_{k-1}})(E_{i_k}^{(c_k)}) \cdots (T_{i_1} \circ T_{i_2})(E_{i_3}^{(c_3)})T_{i_1}(E_{i_2}^{(c_{2})})E_{i_1}^{(c_1)}$$ parametrized by sequences $\textbf{c}=(c_1,\cdots,c_k)\in \mathbb{N}^k$, form a $\mathbb{Q}(q)$-basis of a subalgebra called $U_q^+(w)$ of $U_q(\mathfrak{n})$ which does only depend on $w$ but not on the choice of the reduced expression for $w$. 
\end{theorem}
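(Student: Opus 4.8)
The plan is to reduce the statement to Lusztig's general theory of PBW bases for the subalgebras $U_q^+(w)$ and then to verify that the stated ordering of the product matches Lusztig's convention. The first thing I would do is recall that Lusztig (\cite[Proposition 40.2.1]{Lu1}) associates to each reduced word $\mathbf{i}=(i_1,\ldots,i_k)$ for $w$ a family of root vectors: setting $\beta_t = s_{i_1}s_{i_2}\cdots s_{i_{t-1}}(\alpha_{i_t})$, these are distinct positive roots $\beta_1,\ldots,\beta_k$ (the set $\{\beta_t\}$ equals the inversion set $\Delta^+\cap w(\Delta^-)$, independent of $\mathbf i$), and one defines $E_{\beta_t} = (T_{i_1}\circ\cdots\circ T_{i_{t-1}})(E_{i_t})$ together with divided powers $E_{\beta_t}^{(c)} = (T_{i_1}\circ\cdots\circ T_{i_{t-1}})(E_{i_t}^{(c)})$. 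Lusztig shows the ordered monomials $E_{\beta_k}^{(c_k)}\cdots E_{\beta_2}^{(c_2)}E_{\beta_1}^{(c_1)}$, over $\mathbf c\in\mathbb N^k$, form a $\mathbb Q(q)$-basis of a subalgebra, and that this subalgebra — but not the individual basis elements — is independent of $\mathbf i$. The elements $p_{\mathbf i}(\mathbf c)$ in the statement are exactly these ordered monomials, so the theorem is a direct transcription once the indexing is aligned.

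Concretely, the steps I would carry out are: (i) quote that $T_i$ extends to an algebra automorphism of $U_q(\mathfrak g)$ (already recalled in the excerpt, following \cite[Chapter 37]{Lu1}), so each $(T_{i_1}\circ\cdots\circ T_{i_{t-1}})(E_{i_t}^{(c_t)})$ is a well-defined element of $U_q(\mathfrak g)$; (ii) cite \cite[Proposition 38.2.1, Proposition 40.1.2]{Lu1} to the effect that $E_{\beta_t}$ in fact lies in $U_q(\mathfrak n)$ (not just $U_q(\mathfrak g)$) — this is the point where one uses that $\beta_t$ is a positive root and that $T_{i_1}\cdots T_{i_{t-1}}$ sends $E_{i_t}$ into $U_q^+$ because $s_{i_1}\cdots s_{i_{t-1}}(\alpha_{i_t})>0$; (iii) invoke \cite[Proposition 40.2.1]{Lu1} for the linear independence and spanning of the ordered monomials, giving the basis of the subalgebra $\bigoplus_{\mathbf c}\mathbb Q(q)\,p_{\mathbf i}(\mathbf c)$; and (iv) invoke the independence-of-reduced-expression statement, which in Lusztig follows from the braid relations satisfied by the $T_i$ together with a comparison argument (two reduced words differ by a sequence of braid moves, and one checks the corresponding PBW subalgebras coincide on overlapping initial segments). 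Assembling (i)–(iv) gives exactly the assertion, with $U_q^+(w)$ defined as the common span.

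For our specific situation one can be even more hands-on: $w=s_1s_2s_1s_2$ has length four, the inversion set consists of the four real roots $\alpha_1,\ 2\alpha_1+\alpha_2$ wait — more precisely $\beta_1=\alpha_1$, $\beta_2=s_1(\alpha_2)=\alpha_2+2\alpha_1$, $\beta_3=s_1s_2(\alpha_1)=3\alpha_1+2\alpha_2$ and $\beta_4=s_1s_2s_1(\alpha_2)$, so $k=4$ and $\mathbf c\in\mathbb N^4$, and the four generators of $U_q^+(w)$ are $E_1$, $T_1(E_2)$, $T_1T_2(E_1)$, $T_1T_2T_1(E_2)$. One could in principle bypass the general theorem by computing these four elements explicitly from the formulas for $T_i$ given above, exhibiting the straightening relations mentioned in the introduction, and checking directly that the ordered monomials are a basis; but this is more laborious and not needed here, so I would simply cite Lusztig.

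The main obstacle, such as it is, is bookkeeping rather than mathematics: one must be careful that the ordering of the factors in $p_{\mathbf i}(\mathbf c)$ — with the highest-index divided power $(T_{i_1}\circ\cdots\circ T_{i_{k-1}})(E_{i_k}^{(c_k)})$ on the left and $E_{i_1}^{(c_1)}$ on the right — is precisely Lusztig's convention (some sources use the opposite order, which produces the ``opposite'' PBW basis), and that the divided powers commute with the automorphisms in the sense that $(T_{i_1}\cdots T_{i_{t-1}})(E_{i_t}^{(c_t)}) = \big((T_{i_1}\cdots T_{i_{t-1}})(E_{i_t})\big)^{(c_t)}$, which holds because each $T_i$ is an algebra homomorphism and $[c]!$ is a scalar. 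Once these conventions are pinned down the proof is a citation; the real content of the paper lies in the subsequent analysis of the canonical basis and its dual inside $U_q^+(w)$, not in this structural theorem.
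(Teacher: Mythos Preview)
Your proposal is correct and matches the paper's treatment: the paper does not give a proof of this theorem at all but simply attributes it to Lusztig, citing \cite[Proposition 40.2.1]{Lu1}, and then lists a few remarks (that $E_{\beta_t}\in U_q(\mathfrak n)$ is nontrivial, that the grading is by the root lattice, etc.)\ which are exactly the points you flag in steps (i)--(iv). Your write-up is if anything more detailed than the paper's own handling of this result.
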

Let us make some remarks.
\begin{enumerate}
\item The basis $\left\lbrace p_i(\textbf{c}) \colon \textbf{c} \in \mathbb{N}^k \right\rbrace$ is called a {\it PBW-type basis} of $U_q^+(w)$.
\item The basis is not canonical in the sense that it depends on the choice of the reduced expression for $w$. Every choice of a reduced expression gives a bijection between $\mathbb{N}^k$ and a basis of $U_q^+(w)$. The bijections are known as {\it Lusztig parametrizations}.
\item The same theorem holds for other quivers. For a Dynkin quiver $Q$ there is a unique longest element $w_0 \in W$. In this case $U_q^+(w_0)=U_q(\mathfrak{n})$. Thus, in the Dynkin case we get a PBW basis of the whole algebra $U_q(\mathfrak{n})$ whereas in the present case we have to restrict ourselves to an appropriate subalgebra.
\item It is not obvious that $(T_{i_1} \circ T_{i_2} \circ \cdots \circ T_{i_{l-1}})(E_{i_l}^{(c_l)}) \in U_q(\mathfrak{n})$ for all $1\leq l \leq k$ since the T-automorphisms are maps $T_i \colon U_q(\mathfrak{g}) \to U_q(\mathfrak{g})$.
\item The algebra $U_q(\mathfrak{n})$ is graded by the root lattice $R$ if we set ${\rm deg}(E_i)=\alpha_i$ for $i=1,2$. Then ${\rm deg}((T_{i_1} \circ T_{i_2} \circ \cdots \circ T_{i_{l-1}})(E_{i_l}^{(c_l)}))=s_{i_1}s_{i_2}\cdots s_{i_{l-1}}(\alpha_{i_{l}})$ for all $1 \leq l \leq k$.
\end{enumerate}
Let us consider the reduced expression $s_1s_2s_1s_2$ associated with the terminal $kQ$-module $M$ defined in Section \ref{sec:int}. It is an $Q^{op}$-adapted reduced expression for the given orientation. Note that under the bijection between positive roots and dimension vectors of indecomposable modules given by {\it Kac's theorem}, the positive roots
\begin{itemize}
\item $\alpha_1$,
\item $s_1(\alpha_2)=2\alpha_1+\alpha_2$,
\item $s_1s_2(\alpha_1)=3\alpha_1+2\alpha_2$,
\item $s_1s_2s_1(\alpha_2)=4\alpha_1+3\alpha_2$
\end{itemize}
correspond to the dimension vectors of the four preinjective modules $I_0$, $I_1$, $\tau(I_0)$, $\tau(I_1)$ that are the direct summands of the terminal $kQ$-module $M$ from Section \ref{sec:int}. Therefore we introduce the notation
\begin{itemize}
\item $v_0=E_1$,
\item $v_1=T_1(E_2)$,
\item $v_2=(T_1 \circ T_2)(E_1)$,
\item $v_3=(T_1 \circ T_2 \circ T_1)(E_2)$.
\end{itemize}
Monomials of the form $v[\textbf{a}]=v_3^{(a_3)}v_2^{(a_2)}v_1^{(a_1)}v_0^{(a_0)}$ with $\textbf{a}=(a_3,a_2,a_1,a_0) \in \mathbb{N}^4$ form a basis of the subalgebra $U_q^+(s_1s_2s_1s_2)$. Denote this basis by $\mathcal{P}=\left\lbrace v[\textbf{a}] \colon \textbf{a} \in \mathbb{N}^4\right\rbrace$. We call the basis $\mathcal{P}$ the {\it PBW basis} of $U_q^+(s_1s_2s_1s_2)$.

\subsection{The derivation of the straightening relations using Lusztig's T-automorphisms}
The aim of this subsection is to write arbitrary monomials in the elements $v_3,v_2,v_1,v_0$, for example $v_0^7v_2^3$, as a $\mathbb{Q}(q)$-linear combination of basis elements $v[\textbf{a}]$. Clearly, $v_0^7v_2^3 \in U_q^+(s_1s_2s_1s_2)$ but $v_0^7v_2^3$ is not in $\mathcal{P}$ since $v_0$ and $v_2$ are multiplied in a different order.

First of all let us compute $v_1$. We have
\begin{align*}
v_1 &= T_1(E_2)\\ &=E_2E_1^{(2)}-q^{-1}E_1E_2E_1+q^{-2}E_1^{(2)}E_2  \\
&= \frac{1}{[2]}\Bigg(E_2E_1^2-(q^{-2}+1)E_1E_2E_1+q^{-2}E_1^2E_2 \Bigg). 
\end{align*}
It follows that
\begin{align*}
v_1v_0 - q^{2}v_0v_1 = \frac{1}{[2]} \Bigg( E_2E_1^3-(q^{-2}+1) E_1E_2E_1^2 + q^{-2} E_1^2 E_2 E_1  \\
- q^2 E_1E_2E_1^2 + (1+q^2) E_1^2E_2E_1-E_1^3E_2 \Bigg)  =0
\end{align*}
by the quantum Serre relation. Thus we know that $v_0v_1=q^{-2}v_1v_0$. Applying the composition $T_1 \circ T_2$ to this equation we get $v_2v_3=q^{-2}v_3v_2$. By an analogous argument with interchanged role of $E_1$ and $E_2$ we get $v_1v_2=q^{-2}v_2v_1$.

Note that we my write $v_1$ as a commutator $$v_1=\frac{1}{[2]}(E_2E_1-q^{-2}E_1E_2)E_1-E_1\frac{1}{[2]}(E_2E_1-q^{-2}E_1E_2)$$ of $E_1$ and an element which we will abbreviate as $$A=\frac{1}{[2]}(E_2E_1-q^{-2}E_1E_2).$$ For symmetry let us introduce an element $$B=\frac{1}{[2]}(E_1E_2-q^{-2}E_2E_1).$$ The next lemma is useful for further computations.

\begin{lemma}
The equation $T_1(A)=B$ holds. Similarly, $T_2(B)=A$.
\end{lemma}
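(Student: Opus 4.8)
The statement $T_1(A) = B$ and $T_2(B) = A$ is a direct computation using the explicit formula for $T_i$ on $E_j$ with $j \neq i$, namely $T_i(E_j) = \sum_{r+s=2}(-1)^r q^{-r} E_i^{(r)} E_j E_i^{(s)}$, together with the fact that $T_i$ is an algebra homomorphism. First I would spell out $T_1(E_2) = E_2 E_1^{(2)} - q^{-1} E_1 E_2 E_1 + q^{-2} E_1^{(2)} E_2$ (already done in the excerpt as $v_1$), and note $T_1(E_1) = -K_1^{-1}F_1$, which we will \emph{not} need since $A$ only involves $E_1$ and $E_2$ — wait, $A$ does involve $E_1$, so we do need $T_1(E_1)$. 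Actually the cleaner route is to observe that $A$ can be rewritten so that the application of $T_1$ stays inside $U_q(\mathfrak{n})$; the safest approach is to recall the already-established commutation relations and the identity $v_1 = E_1 A - A E_1$ (equivalently $v_1 = [E_1, A]$, up to the stated normalization) and manipulate formally.

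\textbf{Main computation.} Concretely, I would proceed as follows. Using $v_1 = T_1(E_2)$ and the displayed expression $v_1 = \frac{1}{[2]}\bigl(E_2 E_1 - q^{-2} E_1 E_2\bigr)E_1 - E_1 \cdot \frac{1}{[2]}\bigl(E_2 E_1 - q^{-2} E_1 E_2\bigr) = [E_1, A]$ where the bracket is the ordinary commutator, I want to identify $T_1(A)$. The key trick: apply $T_1$ to the \emph{defining} relation. We have $A = \frac{1}{[2]}(E_2 E_1 - q^{-2} E_1 E_2)$. Since $T_1$ is an algebra map, $T_1(A) = \frac{1}{[2]}\bigl(T_1(E_2)T_1(E_1) - q^{-2} T_1(E_1) T_1(E_2)\bigr)$, and now I substitute $T_1(E_1) = -K_1^{-1}F_1$ and $T_1(E_2) = v_1$. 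After using the commutation relations between $K_1^{-1}, F_1$ and the $E$'s (relations \eqref{nice}, \eqref{commu} of the excerpt) to push the $F_1$ and $K_1^{-1}$ factors through, the $F$- and $K$-dependence must cancel, leaving an element of $U_q(\mathfrak{n})$; I then expand, collect terms, and check it equals $B = \frac{1}{[2]}(E_1 E_2 - q^{-2} E_2 E_1)$. The second identity $T_2(B) = A$ follows by the symmetry $E_1 \leftrightarrow E_2$, $T_1 \leftrightarrow T_2$ (which swaps $A \leftrightarrow B$ by their definitions), so only one of the two needs to be checked by hand.

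\textbf{Expected obstacle.} The only real obstacle is the bookkeeping when pushing $K_1^{-1}$ and $F_1$ through powers of $E_1$ and through $E_2$: one must use $K_1^{-1} E_1 = q^{-2} E_1 K_1^{-1}$, $K_1^{-1} E_2 = q^{2} E_2 K_1^{-1}$, and $F_1 E_1 - E_1 F_1 = \frac{K_1 - K_1^{-1}}{q - q^{-1}}$, and verify that all terms involving $F_1$, $K_1$, $K_1^{-1}$ cancel. This is a finite but slightly delicate calculation; the cleanest way to organize it is to avoid $T_1(E_1)$ altogether by instead applying $T_1$ to the identity $v_1 E_1 - q^{-2} E_1 v_1 = 0$ (equivalently $v_0 v_1 = q^{-2} v_1 v_0$, already proven) — actually the most economical path is: compute $T_1([2]\,B) = T_1(E_1 E_2 - q^{-2} E_2 E_1) = T_1(E_1) v_1 - q^{-2} v_1 T_1(E_1)$ is circular, so instead use that $T_1$ preserves $U_q^+(s_1 s_2 s_1 s_2)$ and that $B = \frac{1}{[2]^2}[v_1, E_1]$-type relations let one express $B$ in terms already known. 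I expect the author simply does the $F_1/K_1$ computation directly, and the payoff is that $T_1$ then acts nicely on the chain $A \mapsto B$, $v_1 = [E_1,A] \mapsto T_1(v_1) = [T_1(E_1), B]$, which is exactly the bridge needed to compute $v_2 = (T_1 \circ T_2)(E_1)$ and $v_3$ in the subsequent subsection. No deeper structural input is required beyond the explicit $T_i$-formulas and the $U_q(\mathfrak{g})$ relations.
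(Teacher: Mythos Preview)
Your plan is correct and matches the paper's approach: the author computes $T_1$ applied to the defining combination of $E_1,E_2$, substitutes $T_1(E_1)=-K_1^{-1}F_1$ and $T_1(E_2)=v_1$, then pushes $K_1^{-1}$ and $F_1$ through using relations \eqref{nice} and \eqref{commu} so that all $K$- and $F$-terms cancel; the second identity is dismissed by the $E_1\leftrightarrow E_2$ symmetry, exactly as you say. Your hesitations about avoiding $T_1(E_1)$ are unnecessary---the direct $F_1/K_1$ bookkeeping is precisely what the paper does, and it goes through without any trick.
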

\begin{proof}
We only prove the first statement. The second is similar. Note that
\begin{align}
T_1(E_1E_2-q^{-2}E_2E_1) &= -K_1^{-1}F_1 \Big(E_2E_1^2-(q^{-2}+1)E_1E_2E_1+q^{-2}E_1^2E_2 \Big) \nonumber \\
& \quad+q^{-2} \Big( E_2E_1^2-(q^{-2}+1)E_1E_2E_1+q^{-2}E_1^2E_2 \Big) K_1^{-1}F_1 \nonumber \\  
& =-K_1^{-1}F_1\Big(E_2E_1^2-(q^{-2}+1)E_1E_2E_1+q^{-2}E_1^2E_2 \Big)\nonumber \\
&\quad +K_1^{-1} \Big( E_2E_1^2-(q^{-2}+1)E_1E_2E_1+q^{-2}E_1^2E_2 \Big) F_1. \label{summe}
\end{align}
because $E_2K_1^{-1}=q^{-2}K_1^{-1}E_2$ and $E_1K_1^{-1}=q^2K_1^{-1}E_1$ by relation (\ref{nice}). Now we use (\ref{commu}) to deduce that
\begin{align}
E_2E_1^2F_1 &= F_1E_2E_1^2 + E_2 \frac{K_1-K_1^{-1}}{q-q^{-1}} E_1 + E_2E_1 \frac{K_1-K_1^{-1}}{q-q^{-1}}, \label{term1} \\
E_1E_2E_1F_1 &= F_1E_1E_2E_1 + \frac{K_1-K_1^{-1}}{q-q^{-1}} E_2 E_1 + E_1E_2 \frac{K_1-K_1^{-1}}{q-q^{-1}}, \label{term2} \\
E_1^2E_2F_1 &= F_1E_1^2E_2 + \frac{K_1-K_1^{-1}}{q-q^{-1}} E_1E_2 + E_1 \frac{K_1-K_1^{-1}}{q-q^{-1}}E_2, \label{term3}
\end{align}
The first terms on the RHS of equations (\ref{term1}),(\ref{term2}), and (\ref{term3}) cancel out with corresponding terms if we substitute in equation (\ref{summe}). We sum up the appropriate linear combinations of the remaining terms on the RHS of equations (\ref{term1}),(\ref{term2}) and (\ref{term3}). Using again relation (\ref{nice}) we get
\begin{align*}
T_1(E_1E_2-q^{-2}E_2E_1) &= \frac{K_1^{-1}K_1}{q-q^{-1}} \Bigg(q^2E_2E_1+E_2E_1-( q^{-2}+1) E_2E_1 \\ & \qquad -(q^{-2}+1)E_1E_2+q^{-2}E_1E_2+q^{-4}E_1E_2 \Bigg)\\
& \quad -\frac{K_1^{-1}K_1^{-1}}{q-q^{-1}} \Bigg( q^{-2}E_2E_1+E_2E_1-(q^{-2}+1)E_2E_1 \\ & \qquad-(q^{-2}+1)E_1E_2+q^{-2}E_1E_2+E_1E_2 \Bigg) \\
& =E_2E_1 -q^{-2}E_1E_2.
\end{align*}
\end{proof}

We know that $v_1=Av_0-v_0A$. Similarly, $T_2(E_1)=BE_2-E_2B$. Applying the automorphism $T_1$ yields to $v_2=Av_1-v_1A$. Applying $T_1 \circ T_2$ to the first equation yields to $v_3=Av_2-v_2A$. Thus, every $v_i$, for $1 \leq i \leq 3$, satifies the commutator relation $v_i=Av_{i-1}-v_{i-1}A$. 

\begin{lemma} The equations 
 \begin{align*}
 v_iv_{i+1} &= q^{-2}v_{i+1}v_i, &(0 \leq i \leq 2), \\
 v_iv_{i+2} &= q^{-2}v_{i+2}v_i+(q^{-2}-1)v_{i+1}^2, &(0 \leq i \leq 1), \\ 
 v_iv_{i+3} &= q^{-2}v_{i+3}v_i+(q^{-4}-1)v_{i+2}v_{i+1}, &(i=0). 
\end{align*} hold.
\end{lemma}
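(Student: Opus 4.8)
The plan is to take the first family of relations as essentially given — $v_0v_1=q^{-2}v_1v_0$ was already deduced from the quantum Serre relation (\ref{qs1}), $v_2v_3=q^{-2}v_3v_2$ from it by applying $T_1\circ T_2$, and $v_1v_2=q^{-2}v_2v_1$ from the symmetric computation based on (\ref{qs2}) — and to derive the remaining (width-two and width-three) straightening relations from the commutator presentation $v_{j+1}=Av_j-v_jA$, which holds for $0\leq j\leq 2$. The only tool I would use repeatedly is the rearranged form $v_jA=Av_j-v_{j+1}$, which moves $A$ from the right of a $v_j$ to its left at the cost of producing $v_{j+1}$. The strategy is always to push every occurrence of $A$ leftwards, substitute the already-known relations of smaller width as they appear, and collect, whereupon all $A$-dependent terms telescope into a commutator and disappear.

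First I would treat the width-two relations. For $i\in\{0,1\}$ write $v_iv_{i+2}=v_i(Av_{i+1}-v_{i+1}A)=v_iAv_{i+1}-v_iv_{i+1}A$; replace $v_iA$ by $Av_i-v_{i+1}$ in the first term, and in the second term use $v_iv_{i+1}=q^{-2}v_{i+1}v_i$ and then $v_iA=Av_i-v_{i+1}$ again. After one further application of $v_iv_{i+1}=q^{-2}v_{i+1}v_i$ the two $A$-terms recombine as $q^{-2}(Av_{i+1}-v_{i+1}A)v_i=q^{-2}v_{i+2}v_i$, and the residual scalar multiple of $v_{i+1}^2$ works out to $(q^{-2}-1)v_{i+1}^2$. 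The identical computation, with indices shifted by one, disposes of $i=0$ and $i=1$ at once.

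Next I would handle the single width-three relation. Starting from $v_0v_3=v_0(Av_2-v_2A)=v_0Av_2-v_0v_2A$ and again moving every $A$ to the left, the new feature is that the product $v_0v_2$ surfaces along the way; at that point I substitute the width-two relation $v_0v_2=q^{-2}v_2v_0+(q^{-2}-1)v_1^2$ just established, together with the subsidiary identity $v_1^2A=Av_1^2-(1+q^{-2})v_2v_1$ (itself obtained from two applications of $v_1A=Av_1-v_2$ and of $v_1v_2=q^{-2}v_2v_1$). Collecting terms, the $A$-monomials recombine into $q^{-2}(Av_2-v_2A)v_0=q^{-2}v_3v_0$, and the coefficient of $v_2v_1$ simplifies via $(q^{-2}-1)(1+q^{-2})=q^{-4}-1$ to the asserted value.

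The computations are elementary, so the main obstacle is purely one of bookkeeping: the intermediate expressions contain the element $A$, which does not lie in $U_q^+(s_1s_2s_1s_2)$, and one must be disciplined about always pushing $A$ leftwards via the single move $v_jA=Av_j-v_{j+1}$ and about tracking the scalars, so that the $A$-dependent contributions cancel exactly and leave an $A$-free right-hand side. One can also regard the three relations as the first instances of an induction on the gap $k$ between the indices, the base case $k=1$ being the first family; note however that $T_1\circ T_2$ shifts such a relation by $i\mapsto i+2$ with the same $k$, so it alone does not reach every case in the statement, which is why the direct computation above is the cleaner route.
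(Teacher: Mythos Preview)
Your proposal is correct and follows essentially the same route as the paper: use $v_{j+1}=Av_j-v_jA$ to write $v_iv_{i+2}$ and $v_0v_3$ as $A$-expressions, push $A$ leftwards, substitute the lower-width relations already known, and watch the $A$-terms telescope into the commutator defining $v_{i+2}$ (resp.\ $v_3$). The only cosmetic difference is that for the relation $v_1v_3=q^{-2}v_3v_1+(q^{-2}-1)v_2^2$ the paper invokes the $E_1\leftrightarrow E_2$ symmetry followed by $T_1$, whereas you simply note that the width-two computation is uniform in the index $i$ --- both are valid since $v_{j+1}=Av_j-v_jA$ holds for $j=0,1,2$.
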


\begin{proof}
The equations in the first line have already been checked. Now
\begin{align*}
v_0v_2 &= v_0Av_1-v_0v_1A \\&=(Av_0-v_1)v_1-q^{-2}v_1(Av_0-v_1) \\ &= q^{-2}Av_1v_0-v_1^2 -q^{-2}v_1Av_0+q^{-2}v_1^2 \\&= q^{-2}v_2v_0 +(q^{-2}-1)v_1^2. 
\end{align*}
By interchanging the role of $E_1$ and $E_2$ and applying $T_1$ we also get $v_1v_3=q^{-2}v_3v_1+(q^{-2}-1)v_2^2$. These are the equations in the second line. Furthermore
\begin{align*}
v_0v_3 &= v_0Av_2-v_0v_2A \\ &= (Av_0-v_1)v_2-(q^{-2}v_2v_0+(q^{-2}-1)v_1^2)A \\&=Av_0v_2 -v_1v_2 -q^{-2}v_2v_0A+(q^{-2}-1)v_1^2A \\
&= A(q^{-2}v_2v_0+(q^{-2}-1)v_1^2)-q^{-2}v_2v_1-q^{-2}v_2v_0A-(q^{-2}-1)v_1^2A \\ &= q^{-2}(Av_2-v_2A)v_0+(q^{-2}-1)Av_1^2-(q^{-2}-1)v_1^2A.
\end{align*}
Now the rest of the lemma follows from
\begin{align*}
Av_1^2-v_1^2A &=(v_1A+v_2)v_1-v_1(Av_1-v_2) \\ &=v_2v_1+v_1v_2 = (q^{-2}+1)v_2v_1.
\end{align*}
\end{proof}

These relations are called {\it straightening relations}; they enable us to write every element in $U_q^+(s_1s_2s_1s_2)$ as a $\mathbb{Q}(q)$-linear combination of basis elements in $\mathcal{P}$, i.e. elements of the form $v_3^{(a_3)}v_2^{(a_2)}v_1^{(a_1)}v_0^{(a_0)}$ with $a_3,a_2,a_1,a_0 \in \mathbb{N}$ and coefficients in $\mathbb{Q}(q)$. 

The straightening relations tell us that $U_q^+(s_1s_2s_1s_2)$ becomes a {\it commutative} subalgebra of $U(\mathfrak{n})$ if we specialize $q=1$. This is remarkable because $U(\mathfrak{n})$ is a non-commutative algebra. (For instance, $E_1E_2 \neq E_2E_1$ in $U(\mathfrak{n})$.) The specialization $q=1$ is sometimes called the {\it classical limit}.

\section{The subalgebra $U_q^+(s_1s_2s_1s_2)$}
\label{sec:sub}
The definitions, results, and proofs from Subsection \ref{dualcan} and Lemma \ref{plemma} from Subsection \ref{sec:rec} are due to Leclerc (\cite{Le1}). Since \cite{Le1} is not published we give a brief sketch. 

\subsection{The dual canonical basis}
\label{dualcan}
To study the algebra $U_q^+(s_1s_2s_1s_2)$ Lusztig (\cite{Lu1}) and Kashiwara (\cite{Ka}) introduced (slightly different) non-degenerate bilinear forms $$\left(.,.\right) \colon U_q(\mathfrak{n}) \times U_q(\mathfrak{n}) \to \mathbb{Q}(q).$$ We work with Kashiwara's form. As described in \cite{Le2} the {\it dual PBW basis} is defined to be the basis adjoint to $\mathcal{P}$ with respect to the bilinear form. The generators $v_i$, for $0 \leq i \leq 3$, satisfy (compare \cite[Section 4.7]{Le2} and note the difference in sign conventions) $$\left(v_i,v_i\right)=\left(E_{(i+1)\alpha_1+i\alpha_2},E_{(i+1)\alpha_1+i\alpha_2}\right)=\frac{(1-q^{-2})^{2i+1}}{1-q^{-2}}=(1-q^{-2})^{2i}.$$ Therefore, the duals are given by $u_i=\frac{1}{(1-q^{-2})^{2i}}v_i$. We see that the $u_i$, $0 \leq i \leq 3$, satisfy the {\it same} straightening relations,
\begin{align*}
 u_iu_{i+1} &= q^{-2}u_{i+1}u_i, &(0 \leq i \leq 2), \\
 u_iu_{i+2} &= q^{-2}u_{i+2}u_i+(q^{-2}-1)u_{i+1}^2, &(0 \leq i \leq 1), \\ 
 u_iu_{i+3} &= q^{-2}u_{i+3}u_i+(q^{-4}-1)u_{i+2}u_{i+1}, &(i=0). 
\end{align*}
It is also possible to derive the straightening relations using Leclerc's algorithm from \cite{Le2} which features quantum shuffles.

Leclerc (\cite{Le1}) also introduced the following structures related to the algebra $U_q^+(s_1s_2s_1s_2)$. Define
\begin{itemize}
 \item a ring anti-automorphism $\sigma \colon U_q^+(s_1s_2s_1s_2) \to U_q^+(s_1s_2s_1s_2)$ by $\sigma(q)=q^{-1}$ and $\sigma(u_i)=q^{2i}u_i$ for $i \in \left\lbrace 0,1,2,3 \right\rbrace$, 
 \item a norm $N \colon \mathbb{N}^4 \to \mathbb{Z}$ by $N(a_3,a_2,a_1,a_0)=(a_3+a_2+a_1+a_0)^2-7a_3-5a_2-3a_1-a_0$,
 \item a partial order $\lhd$ on $\mathbb{N}^4$ by saying that $\textbf{a},\textbf{b} \in \mathbb{N}$ satisfy $\textbf{a} \lhd \textbf{b}$ if and only if $\textbf{b}-\textbf{a} \in \mathbb{N}\left(-1,2,-1,0 \right) \bigoplus \mathbb{N}\left(0,-1,2,-1 \right)$,
 \item a set $S(\textbf{a})=\left\lbrace \textbf{b} \in \mathbb{N}^4 \colon \textbf{a} \lhd \textbf{b} \ {\rm and} \ \textbf{a} \neq \textbf{b}\right\rbrace$,
 \item a function $b \colon \mathbb{N}^4 \to \mathbb{Z}$ by $b(a_3,a_2,a_1,a_0)=\genfrac(){0cm}{0}{a_3}{2}+\genfrac(){0cm}{0}{a_2}{2}+\genfrac(){0cm}{0}{a_1}{2}+\genfrac(){0cm}{0}{a_0}{2}$. 
\end{itemize}

Using these definitions one can describe the dual PBW basis and construct another basis of $U_q^+(w)$, the {\it dual canonical basis}. Both bases are parametrized by $\mathbb{N}^4$. For every $\textbf{a} =(a_3,a_2,a_1,a_0) \in \mathbb{N}^4$ the dual PBW basis element corresponding to $\textbf{a}$, $E[\textbf{a}]$, is given by $$E[\textbf{a}]=q^{b(\textbf{a})}u_3^{a_3}u_2^{a_2}u_1^{a_1}u_0^{a_0},$$ 
see \cite[Section 5.5]{Le2}. It is a rescaling of the PBW basis and for every $\textbf{a} \in \mathbb{N}^4$ we have $(E[\textbf{a}],v[\textbf{a}])=1$. In what follows we often use the fact that $N(\textbf{a})=N(\textbf{b})$ if $\textbf{b} \lhd \textbf{a}$. 
\begin{theorem}[Leclerc, \cite{Le1}]
\label{combprop}
There is a basis $\left\lbrace B[\textbf{a}] \colon \textbf{a} \in \mathbb{N}^4\right\rbrace$ of $U_q^+(w)$ such that for every $\textbf{a} \in \mathbb{N}^4$ the following two conditions hold
\begin{itemize}
 \item $B[\textbf{a}]-E[\textbf{a}] \in \bigoplus_{\textbf{b} \in S(\textbf{a})}q\mathbb{Z}[q]E[\textbf{b}]$,
 \item $\sigma(B[\textbf{a}])=q^{-N(\textbf{a})}B[\textbf{a}]$.
\end{itemize}
\end{theorem}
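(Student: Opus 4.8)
The plan is to obtain $\{B[\textbf{a}]\}$ by the triangularity argument that is standard in the construction of canonical bases, the only substantial input being a precise description of how the anti-automorphism $\sigma$ acts on the dual PBW basis $\{E[\textbf{a}]\}$. So the first step is to compute $\sigma(E[\textbf{a}])$. Since $\sigma$ is a $\mathbb{Q}$-algebra anti-automorphism with $\sigma(q)=q^{-1}$ and $\sigma(u_i)=q^{2i}u_i$, one gets at once
\[ \sigma(E[\textbf{a}]) = q^{-b(\textbf{a})}\,\sigma(u_0)^{a_0}\sigma(u_1)^{a_1}\sigma(u_2)^{a_2}\sigma(u_3)^{a_3} = q^{-b(\textbf{a})+2a_1+4a_2+6a_3}\,u_0^{a_0}u_1^{a_1}u_2^{a_2}u_3^{a_3}, \]
and it remains to rewrite the anti-sorted monomial $u_0^{a_0}u_1^{a_1}u_2^{a_2}u_3^{a_3}$ in the dual PBW basis by means of the straightening relations for the $u_i$. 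Each of these relations has the shape $u_iu_j = q^{-2}u_ju_i + (\text{correction})$ with $i<j$, where the correction is either zero or a single already-sorted monomial whose multiplicity vector exceeds that of $u_ju_i$ by a nonzero element of the cone $\mathbb{N}(-1,2,-1,0)\oplus\mathbb{N}(0,-1,2,-1)$ defining $\lhd$; a short well-founded induction (on the multiplicity vector, ordered by the reverse of $\lhd$, together with the number of inversions of the word) then shows that this rewriting expresses $u_0^{a_0}u_1^{a_1}u_2^{a_2}u_3^{a_3}$ as $q^{-b(\textbf{a})-2\sum_{i<j}a_ia_j}E[\textbf{a}]$ plus a $\mathbb{Z}[q,q^{-1}]$-combination of terms $E[\textbf{b}]$ with $\textbf{b}\rhd\textbf{a}$ (the displayed coefficient is read off from the all-leading path, which performs exactly $\sum_{i<j}a_ia_j$ elementary transpositions, each carrying a factor $q^{-2}$). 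Combining this with the prefactor above yields
\[ \sigma(E[\textbf{a}]) \in q^{-N(\textbf{a})}E[\textbf{a}] + \bigoplus_{\textbf{b}\in S(\textbf{a})}\mathbb{Z}[q,q^{-1}]\,E[\textbf{b}], \]
the identification of the leading exponent coming down to the elementary identity $-2b(\textbf{a})+2a_1+4a_2+6a_3-2\sum_{i<j}a_ia_j=-N(\textbf{a})$, which follows from $2b(\textbf{a})=\sum_i a_i^2-\sum_i a_i$ and $2\sum_{i<j}a_ia_j=(\sum_i a_i)^2-\sum_i a_i^2$.

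Next I would record three structural facts. The map $\sigma$ is $\mathbb{Q}$-linear with $\sigma(q^r x)=q^{-r}\sigma(x)$ and, rescaling each $u_i$, preserves the root-lattice grading; hence every graded piece $U_q^+(w)_\lambda$ is a finite-dimensional $\mathbb{Q}(q)$-vector space with basis $\{E[\textbf{b}]\colon\deg\textbf{b}=\lambda\}$ and is $\sigma$-stable. The norm $N$ is constant on $U_q^+(w)_\lambda$: if $\lambda=m\alpha_1+n\alpha_2$ then any $\textbf{a}$ of degree $\lambda$ satisfies $a_0+2a_1+3a_2+4a_3=m$ and $a_1+2a_2+3a_3=n$, whence $7a_3+5a_2+3a_1+a_0=m+n$ and $N(\textbf{a})=(m-n)^2-(m+n)$. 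And $\sigma$ is an involution, since $\sigma^2(u_i)=\sigma(q^{2i}u_i)=q^{-2i}q^{2i}u_i=u_i$ and $\sigma^2(q)=q$.

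With these in hand, fix $\lambda$, let $N_\lambda$ denote the common value of $N$ on $U_q^+(w)_\lambda$, and set $\tau:=q^{N_\lambda}\sigma$ on $U_q^+(w)_\lambda$. By the two previous paragraphs $\tau$ is a semilinear involution, $\tau^2=\mathrm{id}$ and $\tau(q^r x)=q^{-r}\tau(x)$, which is unitriangular over $\mathbb{Z}[q,q^{-1}]$ with respect to $\lhd$ on the basis $\{E[\textbf{a}]\}$, i.e. $\tau(E[\textbf{a}])=E[\textbf{a}]+\sum_{\textbf{b}\in S(\textbf{a})}\rho_{\textbf{a}\textbf{b}}(q)\,E[\textbf{b}]$ with $\rho_{\textbf{a}\textbf{b}}\in\mathbb{Z}[q,q^{-1}]$. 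The elementary lemma underlying the construction of canonical bases (\cite[Lemma 24.2.1]{Lu1}; for the dual side see also \cite{lnt,Re,Le2}) then produces a unique basis $\{B[\textbf{a}]\}$ of $U_q^+(w)_\lambda$ with $\tau(B[\textbf{a}])=B[\textbf{a}]$ and $B[\textbf{a}]-E[\textbf{a}]\in\bigoplus_{\textbf{b}\in S(\textbf{a})}q\mathbb{Z}[q]\,E[\textbf{b}]$; unwinding the definition of $\tau$ turns the invariance into $\sigma(B[\textbf{a}])=q^{-N(\textbf{a})}B[\textbf{a}]$. Taking the union over all $\lambda$ gives the basis of $U_q^+(w)$ claimed in the theorem.

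The hard part is the first step. One must be certain that every straightening relation feeds back only $\lhd$-strictly larger monomials, so that the dual PBW expansion of $\sigma(E[\textbf{a}])$ is genuinely $\lhd$-triangular with $\lhd$-minimal term $E[\textbf{a}]$, and then keep careful track of the accumulated powers of $q$ in order to recognise the leading coefficient as exactly $q^{-N(\textbf{a})}$; both are bookkeeping, but they require the explicit form of the straightening relations and of the norm $N$. Once this normalization is in place the remainder is the purely formal Kazhdan--Lusztig recursion and presents no further difficulty.
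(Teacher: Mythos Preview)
Your argument is correct and follows essentially the same route as the paper: both establish that $\sigma(E[\textbf{a}])=q^{-N(\textbf{a})}E[\textbf{a}]+\sum_{\textbf{b}\in S(\textbf{a})}\rho_{\textbf{a}\textbf{b}}E[\textbf{b}]$ with $\rho_{\textbf{a}\textbf{b}}\in\mathbb{Z}[q,q^{-1}]$ via the straightening relations, and then run the Kazhdan--Lusztig-type recursion. The only cosmetic difference is that the paper carries out the backward induction explicitly (writing $q^{N(\textbf{a}_i)}d_i=\phi_i(q)-\phi_i(q^{-1})$ and setting $B[\textbf{a}_m]=E[\textbf{a}_m]+\sum\phi_i B[\textbf{a}_i]$), whereas you invoke the standard lemma; conversely, you make the identification of the leading exponent with $-N(\textbf{a})$ fully explicit, while the paper only says ``a short calculation shows that $c_m=q^{-N(\textbf{a})}$''.
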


\begin{proof}
For every $k \in \mathbb{N}$ consider the set $W_k=\left\lbrace \textbf{a} \in \mathbb{N}^4 \colon a_3+a_2+a_1+a_0=k \right\rbrace$. Extend the partial order $\lhd$ on $W_k$ to a total order $<$ so that $\textbf{a}_1,\textbf{a}_2,\ldots,\textbf{a}_l$ are the elements of $W_k$.

We induct backwards. We can start with $B[\textbf{a}_l]=E[\textbf{a}_l]$. For the induction step, suppose that $\textbf{a}_{m+1},\textbf{a}_{m+2},\ldots,\textbf{a}_l$ satisfy the two conditions of Theorem \ref{combprop}. Expand $\sigma\left(E[\textbf{a}_m]\right)$ in the dual PBW basis using the straightening relations. We get a $\mathbb{Z}[q,q^{-1}]$-linear combination of basis elements $E[\textbf{b}]$ with $b \in \left\lbrace\textbf{a}_m,\textbf{a}_{m+1},\ldots,\textbf{a}_l\right\rbrace$, so $$\sigma\left(E[\textbf{a}_m]\right)=\sum_{i=m}^l c_i E[\textbf{a}_i]$$ with $c_{i}\in \mathbb{Z}[q,q^{-1}]$. A short calculation shows that $c_m=q^{-N(\textbf{a})}$; to get $E[\textbf{a}_m]$ you always have to choose the first summand when straightening a monomial.

By induction hypothesis we know that each $B[\textbf{a}_i]$, for $m+1\leq i\leq l$, is a $\mathbb{Z}[q,q^{-1}]$-linear combination in the elements $E[\textbf{a}_i],E[\textbf{a}_{i+1}],\ldots,E[\textbf{a}_l]$. The vector $\left(B[\textbf{a}_{m+1}],B[\textbf{a}_{m+2}],\ldots,B[\textbf{a}_l] \right)$ is obtained from $\left(E[\textbf{a}_{m+1}],\ldots,E[\textbf{a}_l] \right)$ by multiplication with an upper triangular matrix with diagonal entries $1$. By inverting we may write each $B[\textbf{a}_i]$, for $m+1\leq i\leq l$, as a $\mathbb{Z}[q,q^{-1}]$-linear combination of $B[\textbf{a}_i],B[\textbf{a}_{i+1}],\ldots,B[\textbf{a}_l]$. Thus, $$\sigma\left(E[\textbf{a}_m]\right)=q^{-N(\textbf{a}_m)}E[\textbf{a}_m]+\sum_{i=m+1}^ld_iB[\textbf{a}_i]$$ for some $d_{i}\in \mathbb{Z}[q,q^{-1}]$. Apply $\sigma$, an involution, to get $$E[\textbf{a}_m]=q^{N(\textbf{a}_m)}\sigma\left( E[\textbf{a}_m]\right) +\sum_{i=m+1}^l\sigma(d_i)q^{-N(\textbf{a}_i)}B[\textbf{a}_i].$$

The $B[\textbf{a}_i]$, for $m+1 \leq i \leq l$, are linearly independent. Multiply the first equation with $q^{N(\textbf{a}_i)}$ (and remember that $N(\textbf{a}_i)=N(\textbf{a}_m)$) to get $q^{N(\textbf{a}_i)}d_i=-q^{-N(\textbf{a}_i)}\sigma(d_i)=-\sigma(q^{N(\textbf{a}_i)}d_i)$. Therefore, there are polynomials $\phi_i \in q \mathbb{Z}[q]$ such that $q^{N(\textbf{a}_i)}d_i=\phi_i(q)-\phi_i(q^{-1})$. Now $B[\textbf{a}_m]=E[\textbf{a}_m]+\sum_{i=m+1}^l\phi_iB[\textbf{a}_i]$ satisfies the two conditions of Theorem \ref{combprop}. 
\end{proof}

The two conditions of Theorem \ref{combprop} imply that the basis $\left\lbrace B[\textbf{a}] \colon \textbf{a} \in \mathbb{N}^4\right\rbrace$ is adjoint to a basis $\left\lbrace b[\textbf{a}] \colon \textbf{a} \in \mathbb{N}^4\right\rbrace$ with respect to the bilinear form from above that satisfies the following two properties. On one hand we have $(b[\textbf{a}],b[\textbf{a}]) \in 1+q\mathbb{Z}[[q]]$ for every $\textbf{a} \in \mathbb{N}^4$. On the other hand we have $\overline{b[\textbf{a}]}=b[\textbf{a}]$ for every $\textbf{a} \in \mathbb{N}^4$. Here, the symbol $\bar{\quad}$ denotes the bar involution from \cite[Proposition 6]{Le2}. It follows from \cite[Theorem 14.2.3]{Lu1} that $\left\lbrace b[\textbf{a}] \colon \textbf{a} \in \mathbb{N}^4\right\rbrace$ is Lusztig's canonical basis. Therefore, $\left\lbrace B[\textbf{a}] \colon \textbf{a} \in \mathbb{N}^4\right\rbrace$ is the dual of the canonical basis, or the {\it dual canonical basis} to put it shortly.

The two conditions of Theorem \ref{combprop} uniquely determine the dual canonical basis. 

The simplest elements in the dual canonical basis are $B[1,0,0,0]=u_3$, $B[0,1,0,0]=u_2$, $B[0,0,1,0]=u_1$, and $B[0,0,0,1]=u_0$. Further examples include 
\begin{itemize}
\item $B[1,0,1,0]=u_3u_1-q^2u_2^2$,
\item $B[0,1,0,1]=u_2u_0-q^2u_1^2$,
\item $B[1,0,0,1]=u_3u_0-q^2u_2u_1$,
\item $B[2,0,0,1]=qu_3^2u_0-(q+q^3)u_3u_2u_1+q^5u_2^3$,
\item $B[1,0,0,2]=qu_3u_0^2-(q+q^3)u_2u_1u_0+q^5u_1^3$,
\item $B[2,0,0,2]=q^2u_3^2u_0^2-(2q^3+q^4)u_3u_2u_1u_0-q^6u_2^3u_0-q^6u_3u_1^3+q^8u_2^2u_1^2$.
\end{itemize}
Note that $B[1,0,1,0]$ and $B[0,1,0,1]$ are $q$-deformations of the elements $P_1$ and $P_0$. We introduce the abbreviations $p_1=u_3u_1-q^2u_2^2$ and $p_0=u_2u_0-q^2u_1^2$. As observed by Leclerc, the elements $p_0$ and $p_1$ have the remarkable property that they $q$-commute with each other and with each of the generators $u_3,u_2,u_1$ and $u_0$. More precisely, there holds $p_0p_1=q^{-4}p_1p_0$ and
\begin{align*}
&p_0u_0 =q^2u_0p_0, & &p_0u_1=u_1p_0, &  &p_0u_2=q^{-2}u_2p_0,  &p_0u_3=q^{-4}u_3p_0,  \\
&p_1u_0=q^4u_0p_1, & &p_1u_1=q^2u_1p_1, & &p_1u_2=u_2p_1,  &p_1u_3=q^{-2}u_3p_1. 
\end{align*}
These relations can be checked by elementary calculations using the straightening relations.

\subsection{A recursion for dual canonical basis elements}
\label{sec:rec}
Let us introduce the convention that $B[\textbf{a}]=0$ for some $\textbf{a} = (a_3,a_2,a_1,a_0) \in \mathbb{Z}^4$ if there is an $i \in \left\lbrace 0,1,2,3 \right\rbrace $ such that $a_i<0$. Note that $B[0,0,0,0]=1$. 

\begin{lemma}[Leclerc, \cite{Le1}]
\label{plemma}
For every $\textbf{a} =(a_3,a_2,a_1,a_0) \in \mathbb{N}^4$ we have
\begin{align*} B[a_3,a_2+1,a_1,a_0+1] &= q^{a_2+2a_1+3a_0}B[\textbf{a}]p_0 = q^{4a_3+3a_2+2a_1+a_0}p_0 B[\textbf{a}],\\
B[a_3+1,a_2,a_1+1,a_0] &= q^{3a_3+2a_2+a_1}p_1 B[\textbf{a}] = q^{a_3+2a_2+3a_1+4a_0}B[\textbf{a}]p_1.
\end{align*}
\end{lemma}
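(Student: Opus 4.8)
The plan is to verify the two identities in Lemma~\ref{plemma} directly, using the defining characterization of the dual canonical basis given in Theorem~\ref{combprop}, i.e. the two bullet conditions together with their uniqueness. Concretely, to prove the first identity I would \emph{define} a candidate element $X[\textbf{a}] := q^{a_2+2a_1+3a_0} B[\textbf{a}]\, p_0$ and show that $X[\textbf{a}]$ satisfies the two conditions of Theorem~\ref{combprop} with index $(a_3, a_2+1, a_1, a_0+1)$; uniqueness then forces $X[\textbf{a}] = B[a_3, a_2+1, a_1, a_0+1]$. The second identity for $p_1$ is entirely symmetric (interchange the roles of $E_1$ and $E_2$, equivalently apply the composition $T_1\circ T_2$ and relabel), so it suffices to treat the $p_0$ case carefully; the middle equality $q^{a_2+2a_1+3a_0} B[\textbf{a}] p_0 = q^{4a_3+3a_2+2a_1+a_0} p_0 B[\textbf{a}]$ is then a consequence of the second and third conditions once we know both sides equal $B[a_3,a_2+1,a_1,a_0+1]$, or can be checked independently by a degree bookkeeping from the $q$-commutation relations of $p_0$ with the $u_i$.

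For the \emph{first bullet} ($\sigma$-eigenvector property), I would compute $\sigma(X[\textbf{a}])$. Since $\sigma$ is a ring anti-automorphism with $\sigma(q)=q^{-1}$, we have $\sigma(X[\textbf{a}]) = q^{-(a_2+2a_1+3a_0)} \sigma(p_0)\, \sigma(B[\textbf{a}])$. By Theorem~\ref{combprop} applied to $\textbf{a}$ we know $\sigma(B[\textbf{a}]) = q^{-N(\textbf{a})} B[\textbf{a}]$, and from $p_0 = B[0,1,0,1]$ and the definition of $\sigma$ on the generators one computes $\sigma(p_0) = q^{-N(0,1,0,1)} p_0 = q^{-(2^2 - 5 - 1)} p_0 = q^{2} p_0$ (I would double-check this small exponent against $N$). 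Then I would use the $q$-commutation relation $p_0 B[\textbf{a}] = q^{?} B[\textbf{a}] p_0$ — derivable by induction on $|\textbf{a}|$ from the listed commutations $p_0 u_0 = q^2 u_0 p_0$, $p_0 u_1 = u_1 p_0$, $p_0 u_2 = q^{-2} u_2 p_0$, $p_0 u_3 = q^{-4} u_3 p_0$, which give $p_0 B[\textbf{a}] = q^{2a_0 + 0\cdot a_1 - 2 a_2 - 4 a_3} B[\textbf{a}] p_0$ since $B[\textbf{a}]$ is homogeneous with ``content'' $(a_3,a_2,a_1,a_0)$ in the generators — to move $\sigma(p_0) = q^2 p_0$ past $B[\textbf{a}]$, and finally match the resulting total power of $q$ against $-N(a_3,a_2+1,a_1,a_0+1)$. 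This is the step I expect to be the main obstacle: it is purely an exponent-arithmetic verification, but it requires being scrupulous about (a) the precise value of $N$, (b) the sign conventions in $\sigma$, and (c) the fact, noted in the text, that $N(\textbf{b}) = N(\textbf{a})$ whenever $\textbf{b} \lhd \textbf{a}$ — so that the shift by $p_0$ genuinely lands in the right graded piece and the combinatorial norm changes in a controlled way.

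For the \emph{second bullet} (unitriangularity with respect to the dual PBW basis), I would expand $X[\textbf{a}]$ in the dual PBW basis $\{E[\textbf{b}]\}$. Writing $B[\textbf{a}] = E[\textbf{a}] + \sum_{\textbf{b} \in S(\textbf{a})} \gamma_{\textbf{b}}(q) E[\textbf{b}]$ with $\gamma_{\textbf{b}} \in q\mathbb{Z}[q]$, and using $E[\textbf{b}] = q^{b(\textbf{b})} u_3^{b_3} u_2^{b_2} u_1^{b_1} u_0^{b_0}$, I would multiply on the right by $p_0 = u_2 u_0 - q^2 u_1^2$ and apply the straightening relations of Subsection~\ref{dualcan} to re-expand in the ordered monomials $u_3^{c_3} u_2^{c_2} u_1^{c_1} u_0^{c_0}$, hence in the $E[\textbf{c}]$. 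The key points to extract are: first, that the ``leading'' term — the one obtained by always choosing the first summand in every straightening step, applied to $E[\textbf{a}] \cdot p_0$ — produces exactly $q^{\text{(correct power)}} E[a_3, a_2+1, a_1, a_0+1]$, so that after multiplying by $q^{a_2+2a_1+3a_0}$ the coefficient of $E[a_3,a_2+1,a_1,a_0+1]$ in $X[\textbf{a}]$ is $1$; and second, that every other monomial that appears has index strictly $\lhd$-above $(a_3,a_2+1,a_1,a_0+1)$ with coefficient in $q\mathbb{Z}[q]$ — this uses both that $S(\textbf{a}) + (0,1,0,1) \subseteq S(a_3,a_2+1,a_1,a_0+1)$ (a direct check from the definition of $\lhd$ via the two generating vectors $(-1,2,-1,0)$ and $(0,-1,2,-1)$, noting $p_0$'s two monomials $u_2 u_0$ and $q^2 u_1^2$ differ precisely by $(0,1,0,1) - (0,-1,2,-1) = (0,2,-2,2)$... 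I would recheck this) and that the straightening relations only introduce correction terms that move strictly up in $\lhd$ and carry positive powers of $q$. Once both bullets are established, Theorem~\ref{combprop}'s uniqueness clause closes the argument, and the second stated equality plus the entire $p_1$ half follow by the symmetry $E_1 \leftrightarrow E_2$ and the corresponding relabeling $u_i \leftrightarrow u_{3-i}$, $p_0 \leftrightarrow p_1$.
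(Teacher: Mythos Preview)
Your proposal is correct and follows essentially the same route as the paper: define the candidate $q^{\text{(exponent)}}\,p\cdot B[\textbf a]$ (the paper happens to do the $p_1$-from-the-left case, you the $p_0$-from-the-right case), verify the $\sigma$-eigenvector property via the $q$-commutation of $p$ with the PBW monomials appearing in $B[\textbf a]$, verify unitriangularity by straightening, and invoke uniqueness in Theorem~\ref{combprop}. One point to tighten: your phrase ``$B[\textbf a]$ is homogeneous with content $(a_3,a_2,a_1,a_0)$ in the generators'' is not literally true---the PBW expansion of $B[\textbf a]$ involves monomials $u_3^{b_3}u_2^{b_2}u_1^{b_1}u_0^{b_0}$ with varying $\textbf b\lhd\textbf a$---so what you actually need (and what the paper checks) is that the commutation exponent $2b_0-2b_2-4b_3$ is constant along the $\lhd$-cone below $\textbf a$, which is an easy linear check against the generators $(-1,2,-1,0)$ and $(0,-1,2,-1)$.
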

\begin{proof}
We only prove the equation $B[a_3+1,a_2,a_1+1,a_0] = q^{3a_3+2a_2+a_1}p_1 B[\textbf{a}]$. The other equations are similar. We prove that $q^{3a_3+2a_2+a_1}p_1 B[\textbf{a}]$ satisfies the two conditions of Theorem \ref{combprop}. Let us expand $B[\textbf{a}]$ in the dual PBW basis, i.e. $$B[\textbf{a}]=\sum c_{b_3,b_2,b_1,b_0}q^{b(b_3,b_2,b_1,b_0)}u_3^{b_3}u_2^{b_2}u_1^{b_1}u_0^{b_0}$$ where the sum is taken over all $\textbf{b}=(b_3,b_2,b_1,b_0) \in \mathbb{N}^4$ such that $\textbf{b} \lhd \textbf{a}$ and $c_{\textbf{b}} \in \mathbb{Z}[q]$. If $\textbf{b} \neq \textbf{a}$, then $c_{\textbf{b}} \in q\mathbb{Z}[q]$. 

Next, $u_3^{b_3}u_2^{b_2}u_1^{b_1}u_0^{b_0}p_1=q^{2b_3-2b_1-4b_0}p_1u_3^{b_3}u_2^{b_2}u_1^{b_1}u_0^{b_0}$. If $\textbf{b} \lhd \textbf{a}$, then $2b_3-2b_1-4b_0=2a_3-2a_1-4a_0$. Therefore,
\begin{align*}
\sigma\left(q^{3a_3+2a_2+a_1}p_1B[\textbf{a}]\right) &= q^{-3a_3-2a_2-a_1}q^{-N(1,0,1,0)}q^{-N(\textbf{a})}B[\textbf{a}]p_1 \\
&=q^{-3a_3-2a_2-a_1}q^{-N(1,0,1,0)}q^{-N(\textbf{a})}q^{2a_3-2a_1-4a_0}p_1B[\textbf{a}]\\
&=q^{-N(a_3+1,a_2,a_1+1,a_0)}q^{3a_3+2a_2+a_1}p_1B[\textbf{a}]. 
\end{align*}

Recall that $p_1=u_3u_1-q^2u_2^2$. Thus, $q^{3a_3+2a_2+a_1}p_1B[\textbf{a}]$ is equal to $$\sum c_{\textbf{b}}q^{b(\textbf{b})}q^{3a_3+2a_2+a_1}\left( u_3u_1u_3^{b_3}u_2^{b_2}u_1^{b_1}u_0^{b_0}-q^2u_2^2u_3^{b_3}u_2^{b_2}u_1^{b_1}u_0^{b_0}\right).$$ One can check by induction that for every positive integer $l$ there holds $$u_1u_3^l=q^{-2l}u_3^{l}u_1+\left(q^{-4l+2}-q^{-2l+2}\right)u_3^{l-1}u_2^2.$$
The sum above simplifies to
\begin{align*}
\sum &c_{\textbf{b}}q^{b(\textbf{b})}q^{3a_3+2a_2+a_1}\left(q^{-b_3}u_3^{b_3+1}u_1u_2^{b_2}u_1^{b_1}u_0^{b_0}-q^{-2b_3+2}u_3^{b_3}u_2^{b_2+2}u_1^{b_1}u_0^{b_0}\right)\\
&=\sum c_{\textbf{b}}q^{b(b_3+1,b_2,b_1+1,b_0)}u_3^{b_3+1}u_2^{b_2}u_1^{b_1+1}u_0^{b_0}\\ & - \sum c_{\textbf{b}}q^{b_3+b_1+1}q^{b(b_3,b_2+2,b_1,b_0)}u_3^{b_3}u_2^{b_2+2}u_1^{b_1}u_0^{b_0}.
\end{align*}
The coefficients $c_{\textbf{b}}$ are in $q\mathbb{Z}[q]$ except for $c_{\textbf{a}}=1$.
\end{proof}

The preceding lemma enables us two write every dual canonical basis element $B[a_3,a_2,a_1,a_0]$ as a product of powers of dual canonical basis elements $p_1$, $p_0$ and a power of the parameter $q$ and an element of the form $B[a_3,a_2,0,0]$, $B[0,a_2,a_1,0]$, $B[0,0,a_1,a_0]$ or $B[a_3,0,0,a_0]$. 

We have $B[a_3,a_2,0,0]=E[a_3,a_2,0,0]$, $B[0,a_2,a_1,0]=E[0,a_2,a_1,0]$ and $B[0,0,a_1,a_0]=E[0,0,a_1,a_0]$ because these dimension vectors are maximal elements with respect to the partial order $\lhd$. Therefore, dual canonical basis elements of the form $B[a_3,0,0,a_0]$ are particularly interesting. The elements $B[a_3,0,0,a_0]$ with $\vert a_3 -a_0 \vert \leq 1$ can be computed recursively.  
 
\begin{theorem}
\label{thm:recursion}
For every $n\geq 1$ the dual canonical basis elements parametrized by $(n,0,0,n-1),(n-1,0,0,n),(n,0,0,n)$ satisfy the following recursions 
\begin{align} B[n,0,0,n-1] &= q^{n-1} u_3  B\left[ n-1,0,0,n-1 \right]-q^{2n-1}u_2B\left[ n-1,0,1,n-2 \right]  \nonumber \\
&= q^{3n-3} B\left[ n-1,0,0,n-1 \right] u_3 -q^{2n-3}B\left[ n-1,0,1,n-2 \right] u_2, \nonumber \\[1em]
B[n-1,0,0,n] &= q^{n-1} B\left[ n-1,0,0,n-1 \right] u_0 -q^{2n-1}B\left[ n-2,1,0,n-1 \right] u_1  \nonumber \\
&= q^{3n-3} u_0 B\left[ n-1,0,0,n-1 \right]-q^{2n-3} u_1 B\left[ n-2,1,0,n-1 \right],\nonumber \\[1em]
B[n,0,0,n] &= q^{n-1} B\left[ n,0,0,n-1 \right] u_0 -q^{2n}B\left[ n-1,1,0,n-1 \right] u_1 \nonumber \\ 
&= q^{3n-1}u_0 B\left[ n,0,0,n-1 \right] -q^{2n-2} u_1 B\left[ n-1,1,0,n-1 \right] \nonumber \\
&= q^{n-1} u_3 B\left[ n-1,0,0,n \right] -q^{2n} u_2 B\left[ n-1,0,1,n-1 \right] \nonumber \\ 
&= q^{3n-1} B\left[ n-1,0,0,n \right] u_3 -q^{2n-2} B\left[ n-1,0,1,n \right] u_2.\nonumber
\end{align}
\end{theorem}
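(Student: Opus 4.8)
The plan is to verify the defining characterization of the dual canonical basis from Theorem \ref{combprop}: each right-hand side is manifestly a linear combination of dual PBW basis elements, so it suffices to check (i) that the leading term (with respect to $\lhd$) is $E[\textbf{a}]$ with coefficient $1$ for the appropriate $\textbf{a}$, and all other coefficients lie in $q\mathbb{Z}[q]$, and (ii) that the expression is a $\sigma$-eigenvector with eigenvalue $q^{-N(\textbf{a})}$. Because the uniqueness part of Theorem \ref{combprop} is already established, once both conditions hold for a given right-hand side it must equal the dual canonical basis element on the left-hand side. I would treat the three families one at a time, but really they are all instances of one mechanism, and the first identity in each family (the ``left-multiplication'' version) is the one to prove; the ``right-multiplication'' versions then follow by commuting $u_3$, $u_2$, $u_0$, $u_1$ past the known dual canonical basis elements using the quasi-commutation rules, or alternatively by applying $\sigma$ (an anti-automorphism) to the left-multiplication identity and using that $\sigma(u_i) = q^{2i}u_i$ together with the already-proven $\sigma$-eigenvalue of each $B[\cdot]$.

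First I would set up the $\sigma$-eigenvalue check, which is the cleaner half. For $B[n,0,0,n-1] = q^{n-1} u_3 B[n-1,0,0,n-1] - q^{2n-1} u_2 B[n-1,0,1,n-2]$, apply $\sigma$: since $\sigma$ is an anti-automorphism with $\sigma(q)=q^{-1}$, $\sigma(u_3)=q^6 u_3$, $\sigma(u_2)=q^4 u_2$, and $\sigma(B[\textbf{b}])=q^{-N(\textbf{b})}B[\textbf{b}]$, each of the two summands is rescaled by an explicit power of $q$; one then checks that both powers coincide and equal $-N(n,0,0,n-1)$ plus the sign adjustment, using $N(n-1,0,0,n-1)$ and $N(n-1,0,1,n-2)$ — these are equal to each other since $(n-1,0,1,n-2)\lhd(n-1,0,0,n-1)$ is false but both are related via $\lhd$ to a common element, and in any case $N$ is constant on $\lhd$-intervals within a fixed graded piece, which has total degree $2n-1$. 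Actually the clean statement to invoke is the remark after Theorem \ref{combprop} that $N(\textbf{a})=N(\textbf{b})$ whenever $\textbf{a},\textbf{b}$ are $\lhd$-comparable; since all relevant $\textbf{b}$ have the same total sum $2n-1$, they all have the same $N$, call it $N_0$, and one verifies $N_0 = N(n,0,0,n-1)+(\text{explicit shift})$ by a direct substitution into the formula $N(a_3,a_2,a_1,a_0)=(a_3+a_2+a_1+a_0)^2-7a_3-5a_2-3a_1-a_0$. I expect this half to be bookkeeping only.

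The substantive half is condition (i), the triangularity of the leading term, and this is where I would spend the effort. The idea is: expand $B[n-1,0,0,n-1]$ and $B[n-1,0,1,n-2]$ in the dual PBW basis (their expansions are known inductively, with leading term $E[\cdot]$ of coefficient $1$ and all lower terms in $q\mathbb{Z}[q]$), left-multiply by $u_3$ resp. $u_2$, and straighten back into the dual PBW basis using the $u_i$-straightening relations from Subsection \ref{dualcan}. The key combinatorial lemma needed is a closed form for $u_3 \cdot u_3^{b_3}u_2^{b_2}u_1^{b_1}u_0^{b_0}$ and $u_2\cdot u_3^{b_3}u_2^{b_2}u_1^{b_1}u_0^{b_0}$ — the former is trivial, but $u_2 u_3^{b_3}$ must be straightened via $u_2 u_3 = q^2 u_3 u_2$ (this is $u_iu_{i+1}=q^{-2}u_{i+1}u_i$ read with $i=2$, so $u_2u_3 = q^{-2}\cdot$? — careful: the relation is $u_iu_{i+1}=q^{-2}u_{i+1}u_i$, hence $u_2u_3=q^{-2}u_3u_2$, so $u_2u_3^{b_3}=q^{-2b_3}u_3^{b_3}u_2$), which is immediate with no correction terms. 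So in fact the straightening here is mild: the only genuinely nontrivial move is $u_1u_3^l = q^{-2l}u_3^l u_1 + (q^{-4l+2}-q^{-2l+2})u_3^{l-1}u_2^2$, exactly the auxiliary identity already used in the proof of Lemma \ref{plemma}. One then matches the two straightened sums term by term and checks that the $q$-power bookkeeping produces exactly the dual PBW basis element $E[n,0,0,n-1]$ with coefficient $1$ coming uniquely from the leading term of $B[n-1,0,0,n-1]$ (and nothing else can contribute to it, by degree reasons in the $\lhd$-order), while every other contribution carries a coefficient in $q\mathbb{Z}[q]$ — here one must be slightly careful that the rescaling powers $q^{n-1}$, $q^{2n-1}$ are chosen precisely so that no spurious $q^0$ or negative power appears off the leading term and so that cancellations between the $u_3$-term and the $u_2$-term do not accidentally produce a lower-order term with a $q$-free coefficient. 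I expect this last point — controlling the potential partial cancellations between the two summands and confirming they leave the leading term untouched while keeping all other coefficients in $q\mathbb{Z}[q]$ — to be the main obstacle, and I would handle it by pushing the induction through on the explicit dual PBW expansions rather than trying to argue abstractly. The $B[n,0,0,n]$ family is proved the same way, simply with one more layer ($u_0$ or $u_1$ multiplication, using $u_0u_1=q^{-2}u_1u_0$ and the analogue $u_1u_0^l$ expansion), feeding off the already-established $B[n,0,0,n-1]$ and $B[n-1,0,0,n]$.
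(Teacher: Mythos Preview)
Your overall framework is the same as the paper's: verify the two characterizing conditions of Theorem~\ref{combprop} for the right-hand side and conclude by uniqueness. But you have inverted the difficulty of the two conditions, and the one you dismiss as ``bookkeeping only'' is in fact the heart of the argument.

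Condition~(i) (triangularity) really is routine. Left-multiplying a dual PBW monomial $E[\textbf{a}]$ by $u_3$ or by $u_2$ requires only the relations $u_3 u_3^{a_3}=u_3^{a_3+1}$ and $u_2 u_3^{a_3}=q^{-2a_3}u_3^{a_3}u_2$, neither of which produces correction terms; the identity $u_1 u_3^l = q^{-2l}u_3^l u_1 + (\cdots)u_3^{l-1}u_2^2$ that you flag is irrelevant here (it was used in Lemma~\ref{plemma}, not in this proof). One then checks directly that the exponent of $q$ in front of every non-leading term is strictly positive---the paper does exactly this in two short paragraphs.

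Condition~(ii) (the $\sigma$-eigenvector property) is where your proposal has a genuine gap. You write that ``each of the two summands is rescaled by an explicit power of~$q$'' under~$\sigma$. This is false: $\sigma$ is an \emph{anti}-automorphism, so
\[
\sigma\bigl(q^{n-1}u_3\,B[n-1,0,0,n-1]\bigr)=q^{7-n-N(n-1,0,0,n-1)}\;B[n-1,0,0,n-1]\,u_3,
\]
and $B[n-1,0,0,n-1]\,u_3$ is \emph{not} a scalar multiple of $u_3\,B[n-1,0,0,n-1]$, because $u_3$ does not $q$-commute with $u_0$ or $u_1$ (the straightening relations $u_0u_3=q^{-2}u_3u_0+(q^{-4}-1)u_2u_1$ and $u_1u_3=q^{-2}u_3u_1+(q^{-2}-1)u_2^2$ both have nonzero correction terms). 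What $\sigma$ actually gives you is the \emph{right-multiplication} expression $q^{3n-3}B[n-1,0,0,n-1]u_3-q^{2n-3}B[n-1,0,1,n-2]u_2$, and establishing $\sigma(f)=q^{-N}f$ amounts precisely to proving that this equals the original \emph{left-multiplication} expression~$f$. That equality is not formal.

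The paper proves it by a genuinely inductive argument: one expands $B[n-1,0,0,n-1]$ and $B[n-1,0,1,n-2]$ via the recursions for $B[m,0,0,m]$ and $B[m,0,0,m-1]$ at smaller $m$ (which are assumed by induction), obtaining four terms, and then regroups them pairwise using Lemma~\ref{plemma} and the same inductive recursions once more to recover~$f$. This is why the three families must be proved by \emph{simultaneous} induction on~$n$: the $\sigma$-check for $B[n,0,0,n-1]$ consumes the already-established recursions for $B[n-1,0,0,n-1]$ and $B[n-2,0,0,n-2]$, which belong to a different family. Your plan to ``treat the three families one at a time'' and to handle the $\sigma$-step without induction will not close.
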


\begin{proof}
We prove the three statements simultaneously by mathematical induction on $n$. For $n=1$ the equations become $B[1,0,0,0]=u_3=u_3$, $B[0,0,0,1]=u_0=u_0$, and $B[1,0,0,1]=u_3u_0-q^2u_2u_1=q^2u_0u_3-u_1u_2$. Using the explicit formulae for $B[2,0,0,1],B[1,0,0,2]$ and $B[2,0,0,2]$ from above and the straightening relations one can check that the equations are true for $n=2$. Suppose that $n \geq 3$ and that the three statements are true for all smaller $n$. Define $$f=q^{n-1}u_3B[n-1,0,0,n-1]-q^{2n-1} u_2 B[n-1,0,1,n-2].$$ We claim that 
\begin{itemize}
 \item[(i)] $f-E[n,0,0,n-1] \in \bigoplus_{\textbf{b} \in S\left( (n,0,0,n-1) \right) } q \mathbb{Z} E[\textbf{b}]$,
 \item[(ii)] $\sigma (f)=q^{-N(n,0,0,n-1)} f$.
\end{itemize}
The two claims imply that $f=B[n,0,0,n-1]$. 

Let us expand $B[n-1,0,0,n-1]$ in the dual PBW basis, i.e. $$B[n-1,0,0,n-1]=\sum c_{a_3,a_2,a_1,a_0}q^{b(a_3,a_2,a_1,a_0)}u_3^{a_3}u_2^{a_2}u_1^{a_1}u_0^{a_0}$$ where the sum is taken over all $\textbf{a}=(a_3,a_2,a_1,a_0) \in \mathbb{N}^4$ such that $(n-1,0,0,n-1) \lhd \textbf{a}$. This implies that $a_3 \leq n-1$. By definition of the dual canonical basis all coefficients obey $c_{a_3,a_2,a_1,a_0} \in q \mathbb{Z}[q]$ except for $c_{n-1,0,0,n-1}=1$. Then
\begin{align*}
q^{n-1}u_3 B[n-1,0,0,n-1]= &\\ \sum c_{\textbf{a}}q^{-\genfrac(){0cm}{2}{a_3+1}{2}+\genfrac(){0cm}{2}{a_3}{2}+n-1}&q^{b(a_3+1,a_2,a_1,a_0)}u_3^{a_3+1}u_2^{a_2}u_1^{a_1}u_0^{a_0}.  
\end{align*}
But $q^{-\genfrac(){0cm}{2}{a_3+1}{2}+\genfrac(){0cm}{2}{a_3}{2}+n-1}=q^{n-1-a_3} \in \mathbb{Z}[q]$, so $c_{\textbf{a}}q^{n-1-a_3} \in q \mathbb{Z}[q]$ except for $q^{n-1-(n-1)}c_{n-1,0,0,n-1}=1$. 

Now let us expand $B[n-1,0,1,n-2]$ in the dual PBW basis, i.e. $$B[n-1,0,1,n-2]=\sum d_{a_3,a_2,a_1,a_0}q^{b(a_3,a_2,a_1,a_0)}u_3^{a_3}u_2^{a_2}u_1^{a_1}u_0^{a_0}$$ where the sum is taken over all $\textbf{a}=(a_3,a_2,a_1,a_0) \in \mathbb{N}^4$ such that $(n-1,0,1,n-2) \lhd \textbf{a}$. This implies that $a_3 \leq n-1$. By definition of the dual canonical basis all coefficients obey $d_{a_3,a_2,a_1,a_0} \in \mathbb{Z}[q]$.
\begin{align*}
q^{2n-1}u_2 B[n-1,0,1,n-2]&= \\ \sum d_{\textbf{a}}q^{-a_2+2n-1-2a_3}&q^{b(a_3,a_2+1,a_1,a_0)}u_3^{a_3}u_2^{a_2+1}u_1^{a_1}u_0^{a_0}.  
\end{align*}
Since $(n-1,0,1,n-2) \lhd \textbf{a}$, there exists non-negative integers $r,s$ such that $(a_3,a_2,a_1,a_0)=(n-1,0,1,n-2)+s(-1,2,-1,0)+r(0,-1,2,-1)$. Thus, $a_2=2s-r$ and $a_3=n-1-s$ so that $-a_2+2n-1-2a_3=r+1$. So $d_{\textbf{a}}q^{-a_2+2n-1-2a_3} \in q \mathbb{Z}[q]$.

Note that $N(n,0,0,n-1)=4n^2-12n+2 = \colon N$. We apply the anti-automorphism $\sigma$ to $f$ and use the fact that $B[n-1,0,0,n-1]$ and $B[n-1,0,1,n-2]$ are, up to a power of $q$, invariant under $\sigma$ to get
\begin{align*}
 q^N&\sigma(f) \\ 
&= q^{3n-3}B[n-1,0,0,n-1]u_3 - q^{2n-3} B[n-1,0,1,n-1] u_2\\
&= q^{3n-3}B[n-1,0,0,n-1]u_3 - q^{5n-9} p_1 B[n-2,0,0,n-2] u_2\\
&= q^{3n-3}\Bigg( q^{n-2}u_3 B[n-2,0,0,n-1]-q^{2n-2}u_2B[n-2,0,1,n-2] \Bigg) u_3\\
& \quad -q^{5n-9} p_1 \Bigg( q^{n-3}u_3 B[n-3,0,0,n-2] - q^{2n-4}u_2B[n-3,0,1,n-3] \Bigg) u_2 \\
&= q^{4n-5}u_3 B[n-2,0,0,n-1] u_3 -q^{5n-5}u_2 B[n-2,0,1,n-2] u_3\\
& \quad -q^{6n-12} p_1 u_3 B[n-3,0,0,n-2] - q^{7n-13} p_1 u_2 B[n-3,0,1,n-3] u_2. 
\end{align*}
The first and the third summand in the last expression add up to 
\begin{align*}
&q^{n-1}u_3 \Bigg( q^{3n-4}B[n-2,0,0,n-1]u_3-q^{5n-13}p_1 B[n-3,0,0,n-2]u_2\Bigg) \\
\quad &=q^{n-1}u_3 \Bigg( q^{3n-4}B[n-2,0,0,n-1]u_3-q^{2n-4}p_1 B[n-2,0,1,n-2]u_2\Bigg) \\
\quad &=q^{n-1}u_3B[n-1,0,0,n-1];
\end{align*}
whereas the second and the fourth summand add up to 
\begin{align*}
&q^{5n-5}u_2B[n-2,0,1,n-2]u_3-q^{7n-13}p_1u_2B[n-3,0,1,n-3]u_2 \\
&=q^{8n-14}u_2p_1B[n-3,0,0,n-2]u_3-q^{7n-13}p_1u_2B[n-3,0,1,n-3]u_2 \\
&=q^{5n-7} u_2p_1 \Bigg( q^{3n-7}B[n-3,0,0,n-1]u_3-q^{2n-6} B[n-3,0,1,n-3]u_2\Bigg) \\
&=q^{5n-7}u_2p_1B[n-2,0,0,n-2] \\
&=q^{2n-1}u_2B[n-1,0,1,n-2].
\end{align*}
Altogether we get $q^N\sigma(f)=q^{n-1}u_3B[n-1,0,0,n-1]-q^{2n-1}u_2B[n-1,0,1,n-2]=f$.

We see that $f=B[n,0,0,n-1]$ and that the equations of Theorem \ref{thm:recursion} hold. 

By the same argument one can show that
\begin{align*}
B[n-1,0,0,n] &= q^{n-1} B\left[ n-1,0,0,n-1 \right] u_0 -q^{2n-1}B\left[ n-2,1,0,n-1 \right] u_1  \nonumber \\
&= q^{3n-3} u_0 B\left[ n-1,0,0,n-1 \right]-q^{2n-3} u_1 B\left[ n-2,1,0,n-1 \right].\nonumber
\end{align*}

By a very similar argument with the established recursion for $B[n-1,0,0,n]$ and by the inductively known recursion for $B[n-2,0,0,n-1]$ one can show just as above that an element $$g=q^{n-1} u_3 B\left[ n-1,0,0,n \right] -q^{2n} u_2 B\left[ n-1,0,1,n-1 \right]$$
is indeed the dual canonical basis element $B[n,0,0,n]$ and derive the equations 
\begin{align*}
B[n,0,0,n] &= q^{n-1} u_3 B\left[ n-1,0,0,n \right] -q^{2n} u_2 B\left[ n-1,0,1,n-1 \right] \nonumber \\ 
&= q^{3n-1} B\left[ n-1,0,0,n \right] u_3 -q^{2n-2} B\left[ n-1,0,1,n \right] u_2;
\end{align*}
with the same technique one can derive the other equations
\begin{align*}
B[n,0,0,n] &= q^{n-1} u_3 B\left[ n-1,0,0,n \right] -q^{2n} u_2 B\left[ n-1,0,1,n-1 \right] \nonumber \\ 
&= q^{3n-1} B\left[ n-1,0,0,n \right] u_3 -q^{2n-2} B\left[ n-1,0,1,n \right] u_2.
\end{align*}
\end{proof}

From the previous lemma we get a corollary.

\begin{cor}
\label{cor}
 For every integer $n \geq 1$ the following equations hold
\begin{align*}
 &B[n,0,0,n-1]B[1,0,0,1] = q^{3-4n}B[n+1,0,0,n]+q^{4-4n}B[n,1,1,n-1], \\
 &B[1,0,0,1]B[n,0,0,n-1] = q^{1-4n}B[n+1,0,0,n]+q^{-4n}B[n,1,1,n-1],\\[0.5em]
 &B[n,0,0,n]B[1,0,0,1] = q^{-4n}B[n+1,0,0,n+1]+q^{-4n}B[n,1,1,n], \\
 &B[1,0,0,1]B[n,0,0,n] = q^{-4n}B[n+1,0,0,n+1]+q^{-4n}B[n,1,1,n].
\end{align*}
\end{cor}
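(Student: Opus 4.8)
The four identities are the quantized form of the linear three-term recursion (\ref{threeterm}), equivalently of (\ref{nq2}): here $B[1,0,0,1]$ is the quantum analogue of the element $z=U_3U_0-U_2U_1$ of Subsection \ref{bases}, and $B[n,1,1,n-1]$, $B[n,1,1,n]$ are --- by Lemma \ref{plemma} --- of the form $q^{\ast}\,p_1\,B[\,\cdot\,]\,p_0$, i.e.\ the quantum counterparts of $P_1P_0U_{n-1}$. The plan is to prove all four identities simultaneously by induction on $n$, ordering the basis elements $B[a_3,0,0,a_0]$ with $|a_3-a_0|\leq 1$ by the total degree $a_3+a_0$. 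The input for the base cases $n=1$ and $n=2$ is the identity $$B[1,0,0,1]=u_3u_0-q^2u_2u_1=q^2u_0u_3-u_1u_2$$ given by Theorem \ref{thm:recursion} at $n=1$, together with the explicit expansions of $B[2,0,0,1]$, $B[1,0,0,2]$, $B[2,0,0,2]$ recorded in Subsection \ref{dualcan}; any further small-index elements (such as $B[3,0,0,2]$ or $B[1,1,1,0]$) are reached from these by Theorem \ref{thm:recursion} and Lemma \ref{plemma}, after which the identities follow by substitution and repeated use of the straightening relations.

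For the induction step I would compute the left-hand side, say $B[n,0,0,n-1]\,B[1,0,0,1]$, by inserting one of the two expansions of $B[1,0,0,1]$ and then removing the resulting monomials $B[n,0,0,n-1]\,u_i$ one at a time: reading the recursions of Theorem \ref{thm:recursion} in the shape ``$B[\textbf{a}]=q^{\ast}B[\textbf{a}']u_i-q^{\ast}B[\textbf{a}'']u_j$'' lets one trade $B[\textbf{a}']u_i$ for $q^{\ast}B[\textbf{a}]$ together with the correction $q^{\ast}B[\textbf{a}'']u_j$, which is then treated by a further step of the same kind. In parallel I would simplify the right-hand side: $B[n+1,0,0,n]$ (resp.\ $B[n+1,0,0,n+1]$) is expanded by Theorem \ref{thm:recursion}, while $B[n,1,1,n-1]$ (resp.\ $B[n,1,1,n]$) is rewritten by Lemma \ref{plemma} as $q^{\ast}p_1\,B[n-1,0,0,n-2]\,p_0$ (resp.\ $q^{\ast}p_1\,B[n-1,0,0,n-1]\,p_0$), and the quasi-commutation rules of $p_0,p_1$ with $u_3,u_2,u_1,u_0$ listed after the examples of dual canonical basis elements then move every factor into standard position. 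Comparing the two sides collapses, after collecting powers of $q$, to instances of the four asserted identities of strictly smaller total degree, available by the induction hypothesis. The two identities with $B[1,0,0,1]$ on the left can be obtained the same way, or more cheaply by applying the ring anti-automorphism $\sigma$ to the corresponding identities with $B[1,0,0,1]$ on the right and using $\sigma(B[\textbf{a}])=q^{-N(\textbf{a})}B[\textbf{a}]$; one checks that the resulting exponent of $q$ equals the asserted one, e.g.\ $4n-3-N(n+1,0,0,n)+N(1,0,0,1)+N(n,0,0,n-1)=1-4n$.

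I expect the only genuine difficulty to be the $q$-exponent accounting: every use of a straightening relation, of the identity $u_1u_3^l=q^{-2l}u_3^lu_1+(q^{-4l+2}-q^{-2l+2})u_3^{l-1}u_2^2$ established in the proof of Lemma \ref{plemma}, and of Lemma \ref{plemma} itself contributes a power of $q$ that depends on the current exponent vector, and these have to be tracked exactly so that the coefficients come out as $q^{3-4n}$, $q^{4-4n}$, $q^{1-4n}$ and $q^{-4n}$. Two structural checks keep the bookkeeping honest. First, the right-hand sides of the third and fourth identities are literally equal, so the computation must exhibit $B[n,0,0,n]$ and $B[1,0,0,1]$ as commuting --- a nontrivial consistency constraint. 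Second, for the first identity the exponent vector $(n+1,0,0,n)$ is $\lhd$-minimal and does not lie in the dual PBW support of $B[n,1,1,n-1]$, so the coefficient of $E[n+1,0,0,n]$ in the left-hand side determines the coefficient of $B[n+1,0,0,n]$ on the right outright, and a single further monomial (for instance the coefficient of $E[n,1,1,n-1]$ after subtracting that contribution) then fixes the coefficient of $B[n,1,1,n-1]$; the same device works for the other three identities.
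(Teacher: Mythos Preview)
Your induction scheme is sound, and the device of applying $\sigma$ to turn the right-multiplication identities into the left-multiplication ones (with the exponent check $4n-3-N(n+1,0,0,n)+N(1,0,0,1)+N(n,0,0,n-1)=1-4n$) is a legitimate shortcut that the paper does not use. The proposal would go through.

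The paper organises the induction step more cleanly, however, and in a way that sidesteps exactly the bookkeeping difficulty you flag. It never expands $B[1,0,0,1]=u_3u_0-q^2u_2u_1$. Instead it writes the first recursion of Theorem \ref{thm:recursion} at three consecutive levels $n+1$, $n$, $n-1$ and multiplies each line on the right by a tailored factor: the level-$n$ line by $B[1,0,0,1]$ (kept intact), the level-$(n-1)$ line by $q^{4n-5}p_0p_1$ (which, via Lemma \ref{plemma}, turns its left-hand side into $q^{4-4n}B[n,1,1,n-1]$), and the level-$(n+1)$ line by $q^{3-4n}$. On the right-hand sides of the three multiplied lines the surviving nontrivial factors are precisely $B[n-1,0,0,n-1]B[1,0,0,1]$ and $B[n-2,0,0,n-2]B[1,0,0,1]$, i.e.\ the third identity of the Corollary at the smaller levels $n-1$ and $n-2$, which is the induction hypothesis. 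The $u_3$-, $u_2$-, and $p_1$-prefactors then match up termwise and a single subtraction gives the first identity at level $n$; the remaining three are handled ``in a similar way''.

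Your route, by contrast, requires simplifying $B[n,0,0,n-1]u_3$, $B[n,0,0,n-1]u_0$, $B[n,0,0,n-1]u_2u_1$ separately, and Theorem \ref{thm:recursion} does not supply all of these directly (it gives $B[n,0,0,n]u_3$ and $B[n,0,0,n-1]u_0$, for instance, but not $B[n,0,0,n-1]u_3$), so several nested substitutions are needed before the pieces recombine. Keeping $B[1,0,0,1]$ whole and multiplying the recursion itself by it, as the paper does, makes the $q$-exponent accounting almost automatic.
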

The last two equations were conjectured by Leclerc. 

\begin{proof}
We prove the statement by mathematical induction. The case $n=1$ can be checked in a short calculation using the straightening relations. Note also that the last two equations in Corollary \ref{cor} make sense and are true for $n=0$. Suppose that the equations hold for $n$ and all smaller numbers. Let us write down the first equation of Theorem (\ref{thm:recursion}) for three consecutive integers $n+1$, $n$, and $n-1$,
\begin{align}
&B[n+1,0,0,n]  = q^{n}u_3B[n,0,0,n]-q^{2n+1}u_2B[n,0,1,n-1], \label{r1} \\
&B[n,0,0,n-1]  = q^{n-1}u_3B[n-1,0,0,n-1] -q^{2n-1}u_2B[n-1,0,1,n-2], \label{r2} \\
&B[n-1,0,0,n-2] = q^{n-2}u_3B[n-2,0,0,n-2]-q^{2n-3}u_2B[n-2,0,1,n-3]. \label{r3} 
\end{align}
Multiply (\ref{r3}) from the right by $q^{4n-5}p_0p_1$to get
\begin{align}
q^{4-4n}&B[n,1,1,n-1] \nonumber \\ &= q^{-3n+3}u_3B[n-1,1,1,n-1]-q^{n+1}u_2p_1B[n-2,1,1,n-2], 
\end{align}
multiply (\ref{r2}) from the right by $B[1,0,0,1]$ to get
\begin{align}
B[n,0,0,n-1]&B[1,0,0,1] \nonumber \\ &=q^{n-1} u_3B[n-1,0,0,n-1]B[1,0,0,1] \nonumber \\ & \quad -q^{5n-3}u_2p_1B[n-2,0,0,n-2]B[1,0,0,1],
\end{align}
multiply (\ref{r1}) by $q^{3-4n}$ to get
\begin{align}
 q^{3-4n}&B[n+1,0,0,n] \nonumber \\ &= q^{3-3n}u_3B[n,0,0,n] -q^{n+1}u_2p_1B[n-1,0,0,n-1].
\end{align}
Using the induction hypothesis for $n-1$ and $n$ we see that $B[n,0,0,n-1]B[1,0,0,1] = q^{3-4n}B[n+1,0,0,n]+q^{4-4n}B[n,1,1,n-1]$. The other equations in Corollary \ref{cor} can be proved in a similar way. 
\end{proof}

The last two equations in Corollary \ref{cor} imply that the dual canonical basis element $B[1,0,0,1]$ commutes with every $B[n,0,0,n]$ $(n \in \mathbb{Z})$. A conjecture by Berenstein and Zelevinsky (see \cite{BZ}) says that the product of two dual canonical basis elements $b_1$ and $b_2$ is, up to a power of $q$, again a dual canonical basis element if and only if $b_1$ and $b_2$ $q$-commute, that is $b_1b_2=q^{s}b_2b_1$ for some $s \in \mathbb{Z}$. The conjecture turns out to be wrong. Using his quantum shuffle algorithm from \cite{Le2}, in \cite{Le3} Leclerc constructs five counterexamples. The last two equations of Corollary \ref{cor} give infinitely many counterexamples to Berenstein and Zelevinsky's conjecture. The commutativity of $B[1,0,0,1]$ and $B[n,0,0,n]$ implies $q$-commutativity, but the product $B[n,0,0,n]B[1,0,0,1]$ is a linear combination of {\it two} dual canonical basis elements. 

Several authors, e.g. Reineke in \cite{Re}, emphasized the importance of multiplicative properties of dual canoncial basis elements. Corollary \ref{cor} gives four series of expansions of products of dual canonical basis elements in the dual canonical basis.

\section{The quantum cluster algebra structure}
\subsection{The quantized version of the explicit formula for cluster variables}

We may view Corollary \ref{cor} as a quantization of the formulae (\ref{nq1}) and (\ref{nq2}) from Subsection \ref{bases}. Define the integral form $\mathcal{A}_{\mathbb{Z}}$ of $U_q^{+}(w)$ as $$\mathcal{A}_{\mathbb{Z}}=\bigoplus_{{\textbf a} \in \mathbb{N}^4}\mathbb{Z}[q,q^{-1}]u[{\textbf a}];$$ define an algebra $\mathcal{A}_1$ to be $$\mathcal{A}_1=\mathbb{Q} \otimes_{\mathbb{Z}[q,q^{-1}]} \mathcal{A}_{\mathbb{Z}}.$$ Then $\mathcal{A}_1=\mathbb{Q}[U_0,U_1,U_2,U_3]$ with $U_i=1 \otimes u_i$ for $i=0,1,2,3$. We see that $\mathcal{A}_1=\mathcal{A}(\mathcal{C}_M)$.

Note that $B[1,0,0,1]=u_3u_0-q^2u_2u_1 \in \mathcal{A}_{\mathbb{Z}}$ becomes $1 \otimes B[1,0,0,1]=z \in \mathcal{A}_1$ in the specialization $q=1$. The elements $p_1=u_3u_1-q^2u_2^2$ and $p_0=u_2u_0-q^2u_1^2$ specialize to $P_1=U_3U_1-U_2^2$ and $P_0=U_2U_0-U_1^2$. Corollary \ref{cor} means that the elements $B[n,0,0,n-1]$ are quantized cluster variables, because $B[n,1,1,n-1]$ is equal to $B[n-1,0,0,n-2]p_1p_0$ up to a power of $q$. Similarly, the specialization of $B[n,0,0,n]$ at $q=1$ is an element in Caldero-Zelevinsky's semicanonical basis of $\mathcal{A}(\mathcal{C}_M)$, because Corollary \ref{cor} provides a quantized version of the Chebyshev recursion (\ref{nq1}).

In the rest of this section we want to study the quantized cluster algebra structure of $U_q^+(w)$. We will work with quantum binomial coefficients instead of ordinary binomial coefficients as in Section \ref{sec:int}. The next lemma is a quantized version of the addition rule in Pascal's triangle.

\begin{prop}
\label{quantpas}
For quantum binomial coefficients the following relation holds 
\begin{eqnarray}
\genfrac[]{0cm}{0}{n}{k} &=& q^k \genfrac[]{0cm}{0}{n-1}{k}+q^{k-n}\genfrac[]{0cm}{0}{n-1}{k-1} \nonumber \\
&=& q^{-k} \genfrac[]{0cm}{0}{n-1}{k}+q^{n-k}\genfrac[]{0cm}{0}{n-1}{k-1}. \nonumber
\end{eqnarray}
\end{prop}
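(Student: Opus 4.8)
The plan is to prove the two identities directly from the definitions, using the recursion $[n] = q^{-1}[n-1] + q^{n-1}$ (equivalently $[n] = q^{n-1}[n-1]_{\text{adjusted}}$) for quantum integers, which is itself the content of Pascal's rule in the degenerate case $k=1$. First I would record the elementary identity
\begin{align*}
[n] = q^k[n-k] + q^{-(n-k)}[k] = q^{-k}[n-k] + q^{n-k}[k],
\end{align*}
which follows by writing out $\frac{q^n-q^{-n}}{q-q^{-1}}$ and splitting the numerator as $q^k(q^{n-k}-q^{-(n-k)}) + q^{-(n-k)}(q^k - q^{-k})$ for the first form, and symmetrically $q^{-k}(q^{n-k}-q^{-(n-k)}) + q^{n-k}(q^k-q^{-k})$ for the second. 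This is the key algebraic observation; everything else is bookkeeping.

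Next I would substitute into the definition $\genfrac[]{0cm}{0}{n}{k} = \frac{[n][n-1]\cdots[n-k+1]}{[k]!}$. Applying the first splitting $[n] = q^k[n-k] + q^{-(n-k)}[k]$ to the leading factor $[n]$ in the numerator, one gets
\begin{align*}
\genfrac[]{0cm}{0}{n}{k} = q^k \cdot \frac{[n-k][n-1][n-2]\cdots[n-k+1]}{[k]!} + q^{-(n-k)} \cdot \frac{[k][n-1]\cdots[n-k+1]}{[k]!}.
\end{align*}
The first term is exactly $q^k \genfrac[]{0cm}{0}{n-1}{k}$, since $[n-k][n-1][n-2]\cdots[n-k+1] = [n-1][n-2]\cdots[n-k]$ is the numerator of $\genfrac[]{0cm}{0}{n-1}{k}$. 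The second term simplifies by cancelling $[k]$, leaving $q^{-(n-k)} \frac{[n-1]\cdots[n-k+1]}{[k-1]!} = q^{k-n}\genfrac[]{0cm}{0}{n-1}{k-1}$. This establishes the first line. The second line is obtained identically, using instead the splitting $[n] = q^{-k}[n-k] + q^{n-k}[k]$ in the leading factor.

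The main obstacle, such as it is, is purely notational: one must be careful about the index ranges in the products and verify that reindexing $[n-k][n-1]\cdots[n-k+1]$ as a descending product $[n-1]\cdots[n-k]$ is legitimate (it is, since these are the same $k$ factors $[n-1], [n-2], \ldots, [n-k]$ reordered, and $\mathbb{Q}(q)$ is commutative), and similarly that $[n-1]\cdots[n-k+1]$ consists of exactly $k-1$ factors matching the numerator of $\genfrac[]{0cm}{0}{n-1}{k-1}$. One should also note that the identity extends to the edge cases $k=0$ and $k<0$ by the conventions stated in the excerpt (namely $\genfrac[]{0cm}{0}{n}{0}=1$ and $\genfrac[]{0cm}{0}{n}{k}=0$ for $k<0$), so no separate argument is needed there; for $k \geq 1$ the computation above is complete. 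Specializing $q=1$ recovers the ordinary Pascal rule $\genfrac(){0cm}{0}{n}{k} = \genfrac(){0cm}{0}{n-1}{k} + \genfrac(){0cm}{0}{n-1}{k-1}$, as a sanity check.
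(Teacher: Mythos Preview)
Your proof is correct. The paper itself does not give a proof but merely cites the textbook of Kac--Cheung; your direct computation from the definition, splitting the leading factor $[n]$ via the identity $[n]=q^{k}[n-k]+q^{-(n-k)}[k]$ (and its symmetric counterpart), is exactly the standard argument one finds in such a reference, so there is nothing to compare.
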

\begin{proof}
 See \cite{KC}, p. 17-18. 
\end{proof}

\begin{defn}
Define two functions $f,g \colon \mathbb{Z}^3 \to \mathbb{Z}$ by 
\begin{eqnarray}
 f(n,k,l)=n(n-2)+k(n+2)+l(n+1)-2kl, \nonumber \\
 g(n,k,l)=n(n-3)+k(n+1)+l(n+1)-2kl. \nonumber
\end{eqnarray}
\end{defn}

\begin{theorem}
\label{thm:clu}
For every natural number $n\geq 0$ we have 
\begin{align}
& u_2^nB\left[n+1,0,0,n \right]u_1^{n+1} = \sum_{k,l}q^{f(n,k,l)} \genfrac[]{0cm}{0}{n-k}{l}\genfrac[]{0cm}{0}{n+1-l}{k}p_1^{n+1-k}u_2^{2k}u_1^{2l}p_0^{n-l}, \label{clu1} \\
& u_2^nB\left[n,0,0,n \right]u_1^{n} = \sum_{k,l}q^{g(n,k,l)} \genfrac[]{0cm}{0}{n-k}{l}\genfrac[]{0cm}{0}{n-l}{k}p_1^{n-k}u_2^{2k}u_1^{2l}p_0^{n-l}. \label{clu2}
\end{align}
The summation in the first case runs over all pairs $(k,l) \in \mathbb{N}^2$ such that $k+l \leq n$ or $(k,l)=(n+1,0)$; the summation in the second case runs over all pairs $(k,l) \in \mathbb{N}^2$ such that $k+l \leq n$. 
\end{theorem}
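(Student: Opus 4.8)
The plan is to prove both formulas simultaneously by induction on $n$, using the three-term recursions for the dual canonical basis elements supplied by Theorem \ref{thm:recursion} together with the $q$-commutation relations among $u_3,u_2,u_1,u_0,p_0,p_1$ recorded after Theorem \ref{combprop} and the quantum Pascal rule from Proposition \ref{quantpas}. The base case $n=0$ reduces to $B[1,0,0,0]=u_3$ and $B[0,0,0,0]=1$, which match the right-hand sides directly once one checks that the only surviving term of each sum is the expected one (for \eqref{clu1} the pair $(k,l)=(1,0)$ giving $p_1 u_2^{2}\cdot$nothing, and $(0,0)$ giving $p_1$, etc.); the small cases $n=1,2$ can also be verified by hand against the explicit $B$'s listed in Subsection \ref{dualcan} as a sanity check.

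First I would set up the bookkeeping. For \eqref{clu1}, take the first recursion of Theorem \ref{thm:recursion}, namely $B[n+1,0,0,n]=q^{n}u_3B[n,0,0,n]-q^{2n+1}u_2B[n,0,1,n-1]$, and rewrite both terms on the right using (a) the inductive formula \eqref{clu2} for $u_2^{n}B[n,0,0,n]u_1^{n}$, and (b) Lemma \ref{plemma}, which expresses $B[n,0,1,n-1]$ as a power of $q$ times $p_1\,B[n-1,0,0,n-1]$, so that the second term is controlled by the inductive formula \eqref{clu1} at level $n-1$ (after a shift using the $p_1p_0$-versus-$B[\cdot,1,1,\cdot]$ identity implicit in Corollary \ref{cor}). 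Then I would conjugate the whole identity by $u_2^{n}(\,\cdot\,)u_1^{n+1}$: every factor $u_3$, $u_2$, $p_1$, $p_0$ has to be moved past the monomials $u_2^{2k}u_1^{2l}$ and past the outer $u_1,u_2$ powers, each move producing an explicit power of $q$ dictated by the straightening relations. Collecting all these $q$-shifts must reproduce the exponent $f(n,k,l)=n(n-2)+k(n+2)+l(n+1)-2kl$, and the two contributions (from the $u_3$-term and the $u_2$-term) must combine, via Proposition \ref{quantpas} applied to $\genfrac[]{0cm}{0}{n-k}{l}$ and/or $\genfrac[]{0cm}{0}{n+1-l}{k}$, into the single product of quantum binomials in \eqref{clu1}. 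The analogue for \eqref{clu2} uses the recursion $B[n,0,0,n]=q^{n-1}u_3B[n-1,0,0,n]-q^{2n}u_2B[n-1,0,1,n-1]$ together with the already-established \eqref{clu1} at level $n-1$ and again Lemma \ref{plemma}; the exponent to match is $g(n,k,l)=n(n-3)+k(n+1)+l(n+1)-2kl$.

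The routine but voluminous part is the exponent arithmetic: one has to track the $q$-powers from Lemma \ref{plemma}, from pushing $u_3,u_2,p_0,p_1$ through $u_2^{2k}u_1^{2l}$, and from the prefactors $q^{n},q^{2n+1}$ in the recursion, then verify that $f(n-1,k,l)$ (resp.\ the two shifted indices arising from the two terms) plus these corrections equals $f(n,k,l)$. I would organize this by writing each term of the right-hand side in the normal form $q^{(\cdots)}\genfrac[]{0cm}{0}{\cdot}{\cdot}\genfrac[]{0cm}{0}{\cdot}{\cdot}p_1^{\cdot}u_2^{2k}u_1^{2l}p_0^{\cdot}$ and reading off its coefficient of a fixed monomial $p_1^{n+1-k}u_2^{2k}u_1^{2l}p_0^{n-l}$; the $u_3$-term contributes the summand indexed by $(k-1,l)$ shifted, the $u_2$-term the summand indexed by $(k,l)$ (or $(k,l-1)$), and Proposition \ref{quantpas} is exactly the identity that glues them. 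The boundary terms of the summation range — the stray pair $(k,l)=(n+1,0)$ in \eqref{clu1}, and the faces $k+l=n$ — need separate checking, since there the recursion in one of the two binomials may degenerate (a binomial with upper index smaller than lower, hence zero, or equal, hence one); I would handle these by noting $\genfrac[]{0cm}{0}{m}{j}=0$ for $j<0$ or $j>m$ and confirming the quantum Pascal rule still produces the right edge coefficients.

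The main obstacle I anticipate is precisely the exponent matching: there are many non-commuting moves, each contributing a sign-free but index-dependent power of $q$, and it is easy to be off by a term linear in $n$, $k$, or $l$. Concretely, the delicate point is that the two summands coming from the recursion sit at \emph{different} values of the summation indices before Pascal is applied, so the $q$-exponents $f(n-1,\cdot,\cdot)$ have to be aligned not only with each other but with the shape required by Proposition \ref{quantpas} (which has the asymmetric form $q^{k}\genfrac[]{0cm}{0}{n-1}{k}+q^{k-n}\genfrac[]{0cm}{0}{n-1}{k-1}$, equal also to $q^{-k}\genfrac[]{0cm}{0}{n-1}{k}+q^{n-k}\genfrac[]{0cm}{0}{n-1}{k-1}$ — and choosing the correct one of the two presentations for each binomial is part of the puzzle). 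Once the correct normalization of $f$ and $g$ is pinned down (and the definitions in the statement are presumably reverse-engineered to make it work), the induction closes; I would present the computation compactly by isolating one lemma that records, for each of $u_3,u_2,p_0,p_1$, the power of $q$ incurred in commuting it past $u_2^{2k}u_1^{2l}$ and past $u_1^{m},u_2^{m}$, and then invoking it mechanically.
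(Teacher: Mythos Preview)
Your overall plan—simultaneous induction using the recursions of Theorem~\ref{thm:recursion}, Lemma~\ref{plemma}, and the quantum Pascal rule—is exactly the paper's strategy, and the description of the bookkeeping is accurate in outline. Two concrete points, however, need repair.

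First, your choice of recursion for \eqref{clu2} is the wrong one. The identity $B[n,0,0,n]=q^{n-1}u_3B[n-1,0,0,n]-q^{2n}u_2B[n-1,0,1,n-1]$ feeds in $B[n-1,0,0,n]$ and (via Lemma~\ref{plemma}) $B[n-2,0,0,n-1]$, both of which have $a_0>a_3$ and are covered by neither \eqref{clu1} nor \eqref{clu2}; you would have to prove the mirror formula for $B[m,0,0,m+1]$ in parallel. The paper avoids this by using the $u_0$-on-the-right recursion
\[
B[n,0,0,n]=q^{n-1}B[n,0,0,n-1]u_0-q^{2n}B[n-1,1,0,n-1]u_1,
\]
which only involves $B[n,0,0,n-1]$ and $B[n-1,0,0,n-2]$, i.e.\ \eqref{clu1} at levels $n-1$ and $n-2$. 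With this choice one first establishes \eqref{clu2} at level $n$, and only then \eqref{clu1} at level $n$ from \eqref{clu2} at levels $n$ and $n-1$ (so your ordering should be reversed).

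Second, you describe the commutation moves as if each one merely contributes a power of~$q$. The step that actually makes the induction close is not a $q$-commutation at all: after inserting the inductive expression and pushing the stray generator through the $p_1^{\,\cdot}u_2^{\,\cdot}u_1^{\,\cdot}p_0^{\,\cdot}$ block, one is left with a factor $u_2u_0$ (in the paper's version) or $u_3u_1$ (in yours), and one must use the defining identity
\[
u_2u_0=p_0+q^2u_1^2 \qquad\text{(resp.\ }u_3u_1=p_1+q^2u_2^2\text{)}
\]
to split that single term into two. Together with the term coming from $-u_2B[\cdot]$ (resp.\ $-u_1B[\cdot]$), this yields \emph{three} summands indexed at $(k,l)$, $(k,l-1)$, $(k-1,l-1)$ (or the analogous shifts), and it is this three-term combination that collapses under Proposition~\ref{quantpas}; your ``two contributions'' picture would not match a single Pascal identity. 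Once these two fixes are made, the exponent-matching you anticipate goes through exactly as in the paper.
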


\begin{proof}
We prove the theorem by mathematical induction. One can check that both equations hold for $n=0$ and $n=1$. Let $n \geq 2$ and suppose that the equations hold for all smaller values of $n$. By Theorem \ref{thm:recursion} we have 
\begin{align*}
B[n,0,0,n]&=q^{n-1}B[n,0,0,n-1]u_0-q^{2n}B[n-1,1,0,n-1]u_1 \\ &= q^{n-1}B[n,0,0,n-1]u_0-q^{5n-6}B[n-1,0,0,n-2]p_0u_1. 
\end{align*}
In the following calculations we intensively use the fact that the four variables $p_1,p_0,u_2,u_1$ $q$-commute with each other, see Subsection \ref{dualcan}. By induction hypothesis we can assume that 
\begin{align}
\label{erstesumme}
 u_2^n&q^{n-1}B[n,0,0,n-1]u_0u_1^n \nonumber \\&=q^{-n-1}u_2^nB[n,0,0,n-1]u_1^nu_0 \nonumber \\
&=\sum_{k,l} q^{f(n-1,k,l)-n-1}\genfrac[]{0cm}{0}{n-1-k}{l}\genfrac[]{0cm}{0}{n-l}{k}u_2p_1^{n-k}u_2^{2k}u_1^{2l}p_0^{n-1-l}u_0 \nonumber \\
&=\sum_{k,l} q^{f(n-1,k,l)+n-3+2l}\genfrac[]{0cm}{0}{n-1-k}{l}\genfrac[]{0cm}{0}{n-l}{k}p_1^{n-k}u_2^{2k}u_1^{2l}p_0^{n-1-l}u_2u_0.
\end{align}
Now we use the identity $u_2u_0=p_0+q^2u_1^2$. The sum (\ref{erstesumme}) splits into two summmands, namely
\begin{align}
\label{s1}
\sum_{k,l} q^{f(n-1,k,l)+n-3+2l}\genfrac[]{0cm}{0}{n-1-k}{l}\genfrac[]{0cm}{0}{n-l}{k}p_1^{n-k}u_2^{2k}u_1^{2l}p_0^{n-l},
\end{align}
and
\begin{align}
\label{s2}
&\sum_{k,l} q^{f(n-1,k,l)+n-1+2l}\genfrac[]{0cm}{0}{n-1-k}{l}\genfrac[]{0cm}{0}{n-l}{k}p_1^{n-k}u_2^{2k}u_1^{2l+2}p_0^{n-1-l} \nonumber \\
&\quad =\sum_{k,l} q^{f(n-1,k,l)+n-3+2l}\genfrac[]{0cm}{0}{n-1-k}{l-1}\genfrac[]{0cm}{0}{n+1-l}{k}p_1^{n-k}u_2^{2k}u_1^{2l}p_0^{n-l}. 
\end{align}
In the last step we shifted the index from $l$ to $l-1$. Again by induction hypothesis we have
\begin{align}
\label{s3}
u_2^n&q^{5n-6}B[n-1,0,0,n-1]p_0u_1^n \nonumber \\ 
&= \sum_{k,l} q^{f(n-2,k,l)+5n-6}\genfrac[]{0cm}{0}{n-2-k}{l}\genfrac[]{0cm}{0}{n-1-l}{k}p_1^{n-1-k}u_2^{2k+2}u_1^{2l+2}p_0^{n-1-l} \nonumber \\
&= \sum_{k,l} q^{f(n-2,k-1,l-1)+5n-6}\genfrac[]{0cm}{0}{n-1-k}{l-1}\genfrac[]{0cm}{0}{n-l}{k-1}p_1^{n-k}u_2^{2k}u_1^{2l}p_0^{n-l}.
\end{align}
A calculation shows that $f(n-1,k,l)-g(n,k,l)=-n+3-l$, $f(n-1,k,l-1)-g(n,k,l)=-2n+3-l$ and $f(n-2,k-1,l-1)-g(n,k,l)=-5n+7+k$. Thus, by comparing coefficients in (\ref{s1}), (\ref{s2}) and (\ref{s3}) it is enough to show that 
\begin{align*} 
\genfrac[]{0cm}{0}{n-k}{l}\genfrac[]{0cm}{0}{n-l}{k} = q^l \genfrac[]{0cm}{0}{n-1-k}{l}\genfrac[]{0cm}{0}{n-l}{k} & +q^{-n+l}\genfrac[]{0cm}{0}{n-1-k}{l-1}\genfrac[]{0cm}{0}{n+1-l}{k-1} \\ & -q^{k+1} \genfrac[]{0cm}{0}{n-1-k}{l-1}\genfrac[]{0cm}{0}{n-l}{k-1}. 
\end{align*}
But, by Proposition \ref{quantpas},
\begin{align*}
& \genfrac[]{0cm}{0}{n-k}{l}\genfrac[]{0cm}{0}{n-l}{k} - q^l \genfrac[]{0cm}{0}{n-1-k}{l}\genfrac[]{0cm}{0}{n-l}{k} \\ 
& \quad =\genfrac[]{0cm}{0}{n-l}{k} \Bigg( \genfrac[]{0cm}{0}{n-k}{l} - q^l \genfrac[]{0cm}{0}{n-k}{l} \Bigg) = q^{l+k-n} \genfrac[]{0cm}{0}{n-l}{k} \genfrac[]{0cm}{0}{n-k-1}{l-1},
\end{align*}
and
\begin{align*}& q^{-n+l}\genfrac[]{0cm}{0}{n-1-k}{l-1}\genfrac[]{0cm}{0}{n+1-l}{k} - q^{k+1} \genfrac[]{0cm}{0}{n-1-k}{l-1}\genfrac[]{0cm}{0}{n-l}{k-1} \\
& \quad = q^{n-l}\genfrac[]{0cm}{0}{n-k-1}{l-1} \Bigg( \genfrac[]{0cm}{0}{n+1-l}{k} - q^{l+k-n+1} \genfrac[]{0cm}{0}{n-l}{k-1} \Bigg) \\
& \quad = q^{k+l-n} \genfrac[]{0cm}{0}{n-1-k}{l-1}\genfrac[]{0cm}{0}{n-l}{k}.
\end{align*}
This proves (\ref{clu1}). Once we have established equation (\ref{clu1}), equation (\ref{clu2}) can be proved similarly using a recursion for $B[n+1,0,0,n]$ from Theorem \ref{thm:recursion} involving $B[n,0,0,n]$ and $B[n,0,1,n-1]$. 
\end{proof}

Theorem \ref{thm:clu} is a quantized version of the formula (\ref{nonquant}) for the cluster variables in Section \ref{sec:int}. There is an analogous formula for $B[n,0,0,n+1]$. 

It is possible to give an explicit expansion of dual canonical elements of the form $B[n,0,0,n-1]$ in the (dual) PBW basis. Therefore we have to write powers of $p_1$ and $p_0$ in the (dual) PBW basis. The following relations can be proved by induction. For every natural number $k$ the relations
\begin{align}
 p_1^k =\sum_{i=0}^k (-1)^iq^{2i^2-ik-k^2+i+k}\genfrac[]{0cm}{0}{k}{i}u_3^{k-i}u_2^{2i}u_1^{k-i}, \label{pot1}\\
 p_0^k =\sum_{i=0}^k (-1)^iq^{2i^2-ik-k^2+i+k}\genfrac[]{0cm}{0}{k}{i}u_2^{k-i}u_1^{2i}u_0^{k-i} \label{pot2}
\end{align}
hold. Substituting (\ref{pot1}) and (\ref{pot2}) in (\ref{clu1}) yields to
\begin{align*}
 B[n+1,0,0,n]=\sum_{k,l,r,s} (-1)^{k+l+s+r+1} \genfrac[]{0cm}{0}{n-k}{l}\genfrac[]{0cm}{0}{n+1-l}{k}\genfrac[]{0cm}{0}{n+1-k}{s}\genfrac[]{0cm}{0}{n-l}{r} \\
\cdot q^{-l-2kl+2n+kn+ln-3r-lr-r^2+s-ks+2rs-s^2} \\ \cdot E[s,n+2-2s+r,n-1-2r+s,r],
\end{align*}
here the sum is taken over all $k,l,r,s \in \mathbb{N}$ such that $0 \leq s \leq n+1-k$, $0 \leq r \leq n-l$ and either $k+l \leq n$ or $(k,l)=(n+1,0)$. 

The formula is an $q$-analogue of (\ref{coeffs}).

\subsection{The quasi-commutativity of adjacent quantized cluster variables and the quantum exchange relation}

We prove that adjacent quantized cluster quasi-commute, i.e. they are commutative up to a power of $q$.

\begin{lemma}
\label{qcomm}
For every $n \geq 1$ the elements $B[n+1,0,0,n]$ and $B[n,0,0,n-1]$ are $q$-commutative. More precisely, $$B[n,0,0,n-1]B[n+1,0,0,n]=q^2B[n+1,0,0,n]B[n,0,0,n-1].$$
\end{lemma}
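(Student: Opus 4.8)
The plan is to prove the statement by induction on $n$, using the recursions from Theorem \ref{thm:recursion} together with the commutation relations among $p_0,p_1,u_2,u_1$ recorded in Subsection \ref{dualcan}. First I would check the base case $n=1$ directly: $B[1,0,0,0]=u_3$ and $B[2,0,0,1]=qu_3^2u_0-(q+q^3)u_3u_2u_1+q^5u_2^3$, so the claimed identity $u_3 B[2,0,0,1]=q^2 B[2,0,0,1]u_3$ reduces to the straightening relations for how $u_3$ moves past $u_0$, $u_2$, $u_1$; this is a short finite computation. It is worth also recording $n=2$ explicitly if the induction step needs two previous values.

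For the induction step I would take the recursion (in the form used in the proof of Corollary \ref{cor})
\begin{align*}
B[n+1,0,0,n] &= q^{n}u_3B[n,0,0,n]-q^{2n+1}u_2B[n,0,1,n-1],\\
B[n,0,0,n-1] &= q^{n-1}u_3B[n-1,0,0,n-1]-q^{2n-1}u_2B[n-1,0,1,n-2],
\end{align*}
substitute $B[n,0,1,n-1]=B[n-1,0,0,n-1]p_1p_0\cdot q^{?}$ and $B[n-1,0,1,n-2]=B[n-2,0,0,n-2]p_1p_0\cdot q^{?}$ via Lemma \ref{plemma} (so that all four terms are expressed through $u_3$, $u_2$, the $p_i$, and dual canonical basis elements of the form $B[m,0,0,m]$, $B[m,0,0,m-1]$ with smaller index), and then expand the product $B[n,0,0,n-1]B[n+1,0,0,n]$ into four terms. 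In each of the four terms I would commute the leading $u_3$ or $u_2$ coming from the second factor to the left past everything in the first factor using the quasi-commutativity relations $u_iu_j=q^{\pm 2}u_ju_i$ and the relations $p_iu_j=q^{?}u_jp_i$, picking up an overall power of $q$; I also need the $q$-commutativity of $B[m,0,0,m]$ (resp. $B[m,0,0,m-1]$) with $u_3,u_2,p_1,p_0$, which either follows from the explicit PBW expansions in Theorem \ref{thm:clu} / equations (\ref{pot1})--(\ref{pot2}) or can be proved alongside the main induction as an auxiliary statement. Matching these exponents against the symmetric expansion of $B[n+1,0,0,n]B[n,0,0,n-1]$ (obtained the same way) should yield the factor $q^2$; the smaller-index instances of the lemma are exactly what lets the inner products $B[n-1,0,0,n-1]B[n,0,0,n-1]$ etc.\ be re-ordered.

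The main obstacle I expect is bookkeeping of the $q$-exponents: one must be careful that $u_2$ and $u_3$ do \emph{not} simply $q$-commute with all the $u_i$ (there are the quadratic correction terms $v_iv_{i+2}=q^{-2}v_{i+2}v_i+(q^{-2}-1)v_{i+1}^2$ in Lemma at the end of Section \ref{quant}), so moving $u_3$ past an expression containing $u_1$ produces extra $u_2^2$ terms. The clean way around this is to work entirely in terms of the variables $p_1,p_0,u_2,u_1$, which \emph{do} pairwise $q$-commute (Subsection \ref{dualcan}), together with the single "non-commutative" ingredient $u_3=$ (something)$/u_1$ handled only through the recursions — i.e.\ never commute a bare $u_3$ past a bare $u_1$ directly, but instead always feed $u_3\,B[\cdot,0,0,\cdot]$ back through Theorem \ref{thm:recursion}. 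Alternatively, and probably more robustly, one substitutes the explicit formula of Theorem \ref{thm:clu} for $u_2^nB[n+1,0,0,n]u_1^{n+1}$ and the analogue for $B[n,0,0,n-1]$, reducing the claim to a $q$-binomial identity that can be verified term-by-term using Proposition \ref{quantpas}; the cost there is that one must first clear the $u_2,u_1$ factors symmetrically on both sides, which again is just careful exponent tracking. I would present the recursion-based argument as the primary proof and note the Theorem \ref{thm:clu} substitution as an alternative check.
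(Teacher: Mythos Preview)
Your plan is workable in principle but takes a noticeably harder road than the paper does, and it contains one slip worth flagging.

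First the slip: from Lemma \ref{plemma} one has $B[n,0,1,n-1]=q^{3n-3}p_1B[n-1,0,0,n-1]$ and $B[n-1,0,1,n-2]=q^{3n-6}p_1B[n-2,0,0,n-2]$; these involve only $p_1$, not $p_1p_0$ as you wrote.

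More importantly, the obstacle you already identified is real and is precisely what the paper sidesteps. The elements $u_3$ (and $u_0$) do \emph{not} $q$-commute with $B[m,0,0,m]$ or $B[m,0,0,m-1]$: comparing the left and right forms of the recursions in Theorem \ref{thm:recursion} shows there is always a correction term involving $u_2$ and a smaller $B[\cdot]$. So in your four-term expansion you cannot simply push $u_3$ past the inner factor, and ``feeding back through Theorem \ref{thm:recursion}'' generates further terms whose bookkeeping you would have to organise into a simultaneous induction. This can be done, but it is substantially more work than necessary.

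The paper's proof avoids $u_3$ altogether. From Corollary \ref{cor} (together with Lemma \ref{plemma}) one obtains the two expressions
\[
B[n+1,0,0,n]=q^{4n-1}B[1,0,0,1]\,B[n,0,0,n-1]-q^{8n-14}B[n-1,0,0,n-2]\,p_1p_0
\]
and
\[
B[n+1,0,0,n]=q^{4n-3}B[n,0,0,n-1]\,B[1,0,0,1]-q^{8n-12}B[n-1,0,0,n-2]\,p_1p_0.
\]
Multiplying the first on the left and the second on the right by $B[n,0,0,n-1]$ reduces the claim to showing
\[
B[n,0,0,n-1]\,B[n-1,0,0,n-2]\,p_1p_0 = q^{4}\,B[n-1,0,0,n-2]\,p_1p_0\,B[n,0,0,n-1],
\]
which follows immediately from the induction hypothesis and the relation $B[n,0,0,n-1]\,p_1p_0=q^{6}p_1p_0\,B[n,0,0,n-1]$ (Lemma \ref{plemma}). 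The point is that $B[1,0,0,1]$ plays the role of your ``$u_3$'' but, unlike $u_3$, it already sits on both sides of $B[n,0,0,n-1]$ in Corollary \ref{cor}, so no straightening is needed. Your alternative of substituting Theorem \ref{thm:clu} and reducing to a $q$-binomial identity would also succeed, but is much longer than this three-line argument.
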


\begin{proof}
We prove the theorem by mathematical induction. We can verify the statement for $n=1$ in a short calculation using the straightening relations. Suppose that the statement holds for $n-1$. Combine Lemma \ref{plemma} with Corollary \ref{cor} to get
\begin{align*}
B[n+1,0,0,n] &= q^{4n-1}B[1,0,0,1]B[n,0,0,n-1] \\ & \quad -q^{8n-14}B[n-1,0,0,n-2]p_1p_0 \\
&= q^{4n-3} B[n,0,0,n-1]B[1,0,0,1] \\ & \quad -q^{8n-12}B[n-1,0,0,n-2]p_1p_0. 
\end{align*}
Multiply the first expression for $B[n+1,0,0,n]$ from the left and the second from the right with $B[n,0,0,n-1]$. It remains to show that $$B[n,0,0,n-1]B[n-1,0,0,n-2]p_1p_0=q^4B[n-1,0,0,n-2]p_1p_0B[n,0,0,n-1],$$ 
which follows from the induction hypothesis and the relation $$B[n,0,0,n-1]p_1p_0=q^6p_1p_0B[n,0,0,n-1],$$
which follows from Lemma \ref{plemma}.
\end{proof}

Lemma \ref{qcomm} says that every two adjacent quantized cluster variables $B[n+1,0,0,n]$ and $B[n,0,0,n-1]$ form a {\it quantum torus}. If we specialize $q=1$, the elements $B[n+1,0,0,n]$ and $B[n,0,0,n-1]$ become cluster variables in the same cluster of $\mathcal{A}(\mathcal{C}_M)$. 

\begin{lemma}
\label{qexch}
For $n \geq 2$ we have $$B[n+1,0,0,n]B[n-1,0,0,n-2]=q^2B[n,0,0,n-1]^2+q^{2n^2-6n+8}p_1^{n+1}p_0^{n-2}.$$ 
\end{lemma}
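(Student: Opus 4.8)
The statement to be proved, Lemma~\ref{qexch}, is the quantum exchange relation for the quantized cluster variables $B[n,0,0,n-1]$. Its classical shadow is the exchange relation \eqref{nonquantex}, and the proof should run by induction on $n$, exactly as in the proofs of Corollary~\ref{cor}, Theorem~\ref{thm:clu}, and Lemma~\ref{qcomm}. The base case $n=2$ is a finite computation with the explicit elements $B[1,0,0,1]$, $B[2,0,0,1]$, $B[3,0,0,2]$ (the last obtained from the recursion in Theorem~\ref{thm:recursion}) together with the straightening relations; I would simply assert it is checked directly. For the inductive step, assume the identity holds for $n-1$.

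\textbf{Main computation.} The idea is to take the recursion from Corollary~\ref{cor} in the form already used in the proof of Lemma~\ref{qcomm},
\begin{align*}
B[n+1,0,0,n] &= q^{4n-1}B[1,0,0,1]B[n,0,0,n-1] - q^{8n-14}B[n-1,0,0,n-2]p_1p_0,
\end{align*}
multiply it on the right by $B[n-1,0,0,n-2]$, and use the induction hypothesis to replace $B[n,0,0,n-1]B[n-1,0,0,n-2]$ by $q^{-2}\bigl(q^2B[n-1,0,0,n-2]^2+q^{2(n-1)^2-6(n-1)+8}p_1^{n}p_0^{n-3}\bigr)$ up to the appropriate power of $q$. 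One also needs the $q$-commutation of $B[1,0,0,1]$ past $B[n-1,0,0,n-2]$, which follows from Lemma~\ref{qcomm} combined with the $q$-commutation of $B[1,0,0,1]$ with powers of $p_1p_0$ (via Lemma~\ref{plemma}), and the relation $B[n-1,0,0,n-2]p_1p_0 = q^{6}p_1p_0B[n-1,0,0,n-2]$ from Lemma~\ref{plemma} to move the $p_1p_0$ factor through. Assembling everything, the right-hand side organizes into a term proportional to $B[n,0,0,n-1]^2$ — recognized using the other form of the recursion, $B[n,0,0,n-1]=q^{n-1}u_3B[n-1,0,0,n-1]-q^{2n-1}u_2B[n-1,0,1,n-2]$, or more cleanly by re-expressing $B[n,0,0,n-1]^2$ through Corollary~\ref{cor} applied at level $n-1$ — plus a term proportional to $p_1^{n+1}p_0^{n-2}$, and the bookkeeping of $q$-powers yields the coefficients $q^2$ and $q^{2n^2-6n+8}$.

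\textbf{The obstacle.} The conceptual content is modest; the real work is the exponent bookkeeping. Every step introduces a power of $q$ from a straightening or $q$-commutation relation — Lemma~\ref{plemma} alone contributes exponents linear in the $a_i$, and $p_1p_0=q^{-4}p_0p_1$, $p_0u_i$, $p_1u_i$ each contribute more — so the danger is an arithmetic slip. I would handle this by fixing, once and for all, the exponent $E(n):=2n^2-6n+8$ of the inductive term and tracking the difference $E(n)-E(n-1)=4n-10$ against the explicit exponents $4n-1$, $8n-14$, $6$ appearing in the recursions, reducing the verification to a single linear identity in $n$. A secondary subtlety is making sure the two forms of $B[n+1,0,0,n]$ in Corollary~\ref{cor} (one with $B[1,0,0,1]$ on the left, one on the right) are used consistently so that the final answer is symmetric and matches the stated power $q^2$; multiplying one form on the left and the other on the right by $B[n-1,0,0,n-2]$, as in Lemma~\ref{qcomm}, is the cleanest way to force this.
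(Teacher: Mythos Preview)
Your overall strategy---induction on $n$ using the three-term recursion derived from Corollary~\ref{cor} and Lemma~\ref{plemma}---is exactly what the paper does. But two steps in your plan do not go through as written.

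First, the induction hypothesis at level $n-1$ reads
\[
B[n,0,0,n-1]\,B[n-2,0,0,n-3]=q^{2}B[n-1,0,0,n-2]^{2}+q^{2(n-1)^{2}-6(n-1)+8}p_{1}^{\,n}p_{0}^{\,n-3},
\]
so it says nothing about the product $B[n,0,0,n-1]\,B[n-1,0,0,n-2]$ that appears when you multiply the recursion on the right by $B[n-1,0,0,n-2]$. Second, $B[1,0,0,1]$ does \emph{not} $q$-commute with $B[n-1,0,0,n-2]$: from Corollary~\ref{cor} one has
$B[m,0,0,m-1]B[1,0,0,1]=q^{3-4m}B[m+1,0,0,m]+q^{4-4m}B[m,1,1,m-1]$ and
$B[1,0,0,1]B[m,0,0,m-1]=q^{1-4m}B[m+1,0,0,m]+q^{-4m}B[m,1,1,m-1]$,
and the two coefficients scale by different powers of $q$, so no single $q^{\alpha}$ works. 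Lemma~\ref{qcomm} is only about \emph{adjacent} cluster variables and does not help here.

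The paper's remedy is precisely the trick you hint at in your last sentence: rather than manipulating one expansion, consider the triple product
\[
p \;=\; B[n,0,0,n-1]\cdot B[1,0,0,1]\cdot B[n-1,0,0,n-2]
\]
and expand it in two ways using Corollary~\ref{cor}. Grouping the last two factors and applying Corollary~\ref{cor} at level $n-1$ gives
\[
p=q^{5-4n}B[n,0,0,n-1]^{2}+q^{4-4n}B[n,0,0,n-1]\,B[n-1,1,1,n-2];
\]
now Lemma~\ref{plemma} turns $B[n-1,1,1,n-2]$ into a power of $q$ times $B[n-2,0,0,n-3]\,p_{1}p_{0}$, and \emph{that} is where the induction hypothesis applies. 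Grouping the first two factors and applying Corollary~\ref{cor} at level $n$ gives
\[
p=q^{3-4n}B[n+1,0,0,n]\,B[n-1,0,0,n-2]+q^{4-4n}B[n,1,1,n-1]\,B[n-1,0,0,n-2],
\]
and again Lemma~\ref{plemma} converts the second term into a multiple of $B[n-1,0,0,n-2]^{2}\,p_{1}p_{0}$. Equating the two expansions, the $B[n-1,0,0,n-2]^{2}p_{1}p_{0}$ terms cancel against those coming from the induction hypothesis, and what remains is exactly the claimed identity. The exponent bookkeeping is indeed the main labor, but only once you have this double expansion in hand.
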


\begin{proof}The statement is true in the case $n=2$. We use mathematical induction. Consider the product $p=B[n,0,0,n-1]B[1,0,0,1]B[n-1,0,0,n-2]$. We evaluate $p$ according to Corollary \ref{cor} in two different ways. On one hand we get 
\begin{align*}p &= B[n,0,0,n-1]\left(q^{5-4n}B[n,0,0,n-1]+q^{4-4n}B[n-1,0,0,n-2]\right)\\ 
&=q^{5-4n}B[n,0,0,n-1]^2+q^{4n-17}B[n,0,0,n-1]B[n-2,0,0,n-3]p_1p_0\\ 
&=q^{5-4n}B[n,0,0,n-1]^2\\& \quad +q^{4n-17}\left(q^2B[n-1,0,0,n-2]^2+q^{2n^2-10n+16}p_1^{n}p_0^{n-3}\right)p_1p_0. 
\end{align*}
In the above equations we have used Lemma \ref{plemma} and the induction hypothesis. On the other hand 
\begin{align*}
p&=\left(q^{3-4n}B[n+1,0,0,n]+q^{4-4n}B[n,1,1,n-1]\right)B[n-1,0,0,n-2]\\
&=q^{3-4n}B[n+1,0,0,n]B[n-1,0,0,n-2]+q^{4n-15}B[n-1,0,0,n-2]^2p_1p_0.
\end{align*}
Comparing both expressions for $p$ gives $$B[n+1,0,0,n]B[n-1,0,0,n-2]=q^2B[n,0,0,n-1]^2+q^{2n^2-6n+8}p_1^{n+1}p_0^{n-2}.$$
\end{proof}
Lemma \ref{qexch} is a quantized version of the exchange relation for the cluster algebra.

\subsection{Conclusion}
In the last subsection we draw the conclusion that $U_q^+(s_1s_2s_1s_2)$ carries a quantum cluster algebra structure as defined by Berenstein-Zelevinsky in \cite{BZ2}. 

To be in accord with \cite{BZ2} we rescale our quantized cluster variables. Recall that the generators $u_0$, $u_1$, $u_2$, and $u_3$ of $U_q^+(s_1s_2s_1s_2)$ correspond to the $\Lambda$-modules $T_0$, $T_1$, $T_2$, and $T_3$ of Subsection \ref{rigid}, the dual canonical basis elements $B[n-2,0,0,n-3]$ corresponds to the $\Lambda$-module $T_n$, and $p_0$ and $p_1$ correspond to the $\Lambda$-modules $P_1$ and $P_0$. We rescale each of the above elements by a power of $q$; the exponent is $-\frac{1}{2}$ times the dimension of the endomorphism algebra of the associated $\Lambda$-module. More precisely, introduce elements
\begin{itemize}
 \item $X_0 = q^{-\frac{1}{2}}u_0$, $X_1 = q^{-\frac{1}{2}}u_1$, $X_2 = q^{-\frac{1}{2}}u_2$, $X_3 = q^{-\frac{1}{2}}u_3$,
 \item $Y_0 = q^{-2}p_0$, $Y_1=q^{-2}p_1$,
 \item $X_n =q^{-\frac{1}{2}(2n-5)^2}B[n-2,0,0,n-3]$ for $n \geq 3$
\end{itemize}
in the the algebra $\mathbb{Q}[q^{\pm \frac{1}{2}}] \otimes_{\mathbb{Z}[q,q^{-1}]} U_q^+(w)$ and analogous elements $X_n$ for $n<0$. Here we have enlarged the field of coefficients $\mathbb{Q}(q)$ of $U_q^+(w)$ to contain a square root of $q$. 

Note that in the above examples, the dimension of the endomorphism algebra of the $\Lambda$-module corresponding to $B[a_3,a_2,a_1,a_0]$ is equal to $(a_3+a_2+a_1+a_0)^2$. The exact form of the rescaling exponent was suggested by Leclerc.

For $n \geq 3$ consider four variables $(X_n,X_{n+1},Y_0,Y_1)$ which we group into a cluster. By Lemma \ref{plemma} and Lemma \ref{qcomm} the variables $q$-commute; more precisely, we have $X_nX_{n+1}=q^2X_{n+1}X_n$, $X_nY_0=q^{2n-2}Y_0X_n$, $X_nY_1=q^{-2n+8}Y_1X_n$, $X_{n+1}Y_0 = q^{2n}Y_0X_{n+1}$, $X_{n+1}Y_1=q^{-2n+6}Y_1X_{n+1}$, and $Y_0Y_1=q^{-4}Y_1Y_0$. The matrix $$L=\begin{pmatrix} 0 & 2 & 2n-2 & -2n+8 \\ -2 & 0 & 2n & -2n+6 \\ -2n+2 & -2n & 0 & -4 \\ 2n-8 & 2n-6 & 4 & 0\end{pmatrix}$$ describes the exponents that occur in the commutation relations. The cluster $(X_n,X_{n+1},Y_0,Y_1)$ together with $L$ and the exchange matrix $$B=\begin{pmatrix}0&2 \\ -2&0 \\ n-3 & -n+4 \\ n & -n+1\end{pmatrix}$$ (which is the same as the exchange matrix for the ordinary cluster algebra $\mathcal{A}(\mathcal{C}_M)$) forms a {\it quantum seed} (compare \cite[Definition 4.5]{BZ2}). With every ${\textbf a}=(a_4,a_3,a_2,a_1)\in \mathbb{Z}^4$ Berenstein-Zelevinsky (see \cite[Equation 4.19]{BZ2}) associate an expression
\begin{align*}
M(a_1,a_2,a_3,a_4) &=q^{\frac{1}{2} \sum_{i>j} a_ia_j L_{ij}}X_n^{a_1}X_{n+1}^{a_2}Y_0^{a_3}Y_1^{a_4}\\
&=q^{-\frac{1}{2} \sum_{i>j} a_ia_j L_{ij}}Y_1^{a_4}Y_0^{a_3}X_{n+1}^{a_2}X_n^{a_1}.
\end{align*}
We have $\frac{1}{2} \sum_{i>j} a_ia_j L_{ij} = -a_1a_2-(n-1)a_1a_3+(n-4)a_1a_4-na_2a_3+(n-3)a_2a_4+2a_3a_4$. Lemma \ref{qexch}, written in terms of $X_n,X_{n+1},Y_0,$ and $Y_1$, says that \begin{align*}X_{n+2}X_n&=q^{-2}X_{n+1}^2+q^{-2n^2+6n-3}Y_1^nY_0^{n-3}\end{align*}
Thus, we get an equation for the quantized cluster variable $X_{n+2}$. There holds
\begin{align}X_{n+2}&=q^{-2}X_{n+1}^2X_n^{-1}+q^{-2n^2+6n-3}Y_1^nY_0^{n-3}X_n^{-1} \nonumber \\&= M(-1,2,0,0)+M(-1,0,n-3,n). \label{fin}
\end{align}
Equation (\ref{fin}) is equal to the exchange relation \cite[Equation 4.23]{BZ2} of Berenstein and Zelevinsky. 

The direct sum $\bigoplus_{{\textbf a} \in \mathbb{N}^4}\mathbb{Z}[q^{\pm \frac{1}{2}}]u[{\textbf a}]$ is a $\mathbb{Z}[q,q^{-1}]$-algebra, since the straightening relations involve only polynomials in $q$. It is an integral form of the algebra $\mathbb{Q}[q^{\pm \frac{1}{2}}] \otimes_{\mathbb{Z}[q,q^{-1}]} U_q^+(w)$ defined above. It is generated by $X_0$, $X_1$, $X_2$, and $X_3$ and furthermore contains each $X_n$ for $n \in \mathbb{Z}$. Therefore, it coincides with the $\mathbb{Z}[q,q^{-1}]$-algebra generated by all $X_n$ with $n\in \mathbb{Z}$ which is by definition equal to the quantum cluster algebra as defined in  \cite{BZ2}. 

We conclude with the theorem.
\begin {theorem}
The $\mathbb{Z}[q^{\pm \frac{1}{2}}]$-algebra $$\bigoplus_{{\textbf a} \in \mathbb{N}^4}\mathbb{Z}[q^{\pm \frac{1}{2}}]u[{\textbf a}] \subseteq \mathbb{Q}[q^{\pm \frac{1}{2}}] \otimes_{\mathbb{Z}[q,q^{-1}]} U_q^+(w)$$ is a quantum cluster algebra.
\end {theorem}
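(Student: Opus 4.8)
The plan is to read off from the constructions already assembled a \emph{quantum seed} in the sense of Berenstein--Zelevinsky whose associated quantum cluster algebra equals $\bigoplus_{\mathbf{a}\in\mathbb{N}^4}\mathbb{Z}[q^{\pm\frac12}]u[\mathbf{a}]$, and then to prove this equality by a double inclusion. For the initial quantum seed I would take the quadruple $(X_n,X_{n+1},Y_0,Y_1)$ (say with $n=3$) living in the skew-field of fractions of $U_q^{+}(w)$ — which is an Ore domain, being a subalgebra of $U_q(\mathfrak n)$ — equipped with the skew-symmetric matrix $L$ and the exchange matrix $B$ displayed just before the theorem. One checks that this is a quantum seed in the sense of \cite[Definition 4.5]{BZ2}: the pairwise $q$-commutation of $X_n,X_{n+1},Y_0,Y_1$ with exponents prescribed by $L$ follows from Lemma \ref{plemma} and Lemma \ref{qcomm}, and the compatibility condition $B^{T}L=(D\mid 0)$ with $D$ diagonal (in fact $D=\pm 2\,\mathrm{Id}_2$, matching the sign convention of \cite{BZ2}) is a finite computation with the explicit $4\times 4$ and $4\times 2$ matrices.

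Next I would describe the whole mutation graph. Mutating the seed at $n$ in the exchangeable direction $X_n$ produces $X_{n+2}$ via precisely the Berenstein--Zelevinsky exchange relation \cite[Equation 4.23]{BZ2}, which is Equation (\ref{fin}); mutating in the direction $X_{n+1}$ produces $X_{n-1}$ by the index-shifted form of the same argument, using the symmetric halves of Lemma \ref{plemma} and Lemma \ref{qexch}. It then remains to check that the Berenstein--Zelevinsky matrix mutation $\mu_k(L,B)$ carries the pair of the $n$-th seed to the pair of the $(n\pm 1)$-st seed: since $B$ is literally the classical exchange matrix of $\mathcal{A}(\mathcal{C}_M)$ its mutation is already known, and the mutation of $L$ is a short matrix computation. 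By induction the clusters are exactly $\{(X_m,X_{m+1},Y_0,Y_1):m\in\mathbb{Z}\}$, so the set of all cluster variables is $\{X_m:m\in\mathbb{Z}\}\cup\{Y_0,Y_1\}$ and the quantum cluster algebra $\mathcal{A}$ is the $\mathbb{Z}[q^{\pm\frac12}]$-subalgebra of the skew-field that they generate.

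For the identification $\mathcal{A}=\bigoplus_{\mathbf{a}\in\mathbb{N}^4}\mathbb{Z}[q^{\pm\frac12}]u[\mathbf{a}]$ I would argue both inclusions. For $\mathcal{A}\subseteq\bigoplus\mathbb{Z}[q^{\pm\frac12}]u[\mathbf{a}]$ it is enough that every cluster variable lies in the right-hand side: $X_0,X_1,X_2,X_3$ are scalar multiples of $u_0,u_1,u_2,u_3$; $Y_0,Y_1$ are quadratic in them; and for $m\geq 3$ one has $X_m=q^{-\frac12(2m-5)^2}B[m-2,0,0,m-3]$ (with the evident analogues for $m<0$), which expands in the dual PBW basis with coefficients in $\mathbb{Z}[q,q^{-1}]$ by the very definition of the dual canonical basis, hence $X_m\in\mathcal{A}_{\mathbb{Z}}\subseteq\bigoplus\mathbb{Z}[q^{\pm\frac12}]u[\mathbf{a}]$; and the right-hand side is closed under multiplication because the straightening relations have coefficients in $\mathbb{Z}[q,q^{-1}]$. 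For the reverse inclusion, that same fact about the straightening relations shows that the $\mathbb{Z}[q^{\pm\frac12}]$-algebra generated by $u_0,u_1,u_2,u_3$ is all of $\bigoplus\mathbb{Z}[q^{\pm\frac12}]u[\mathbf{a}]$, and since $u_i=q^{\frac12}X_i\in\mathcal{A}$ this algebra sits inside $\mathcal{A}$. Combining the two inclusions yields the theorem.

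The step I expect to be the main obstacle is the bookkeeping in the middle paragraph: matching Berenstein--Zelevinsky's matrix mutation $\mu_k(L,B)$ with the $(L,B)$-data of the neighbouring seed, and verifying that mutating alternately at $X_n$ and at $X_{n+1}$ is globally consistent, so that the seeds are genuinely indexed by $\mathbb{Z}$ with no unexpected coincidences and no further clusters. Everything else reduces to the explicit formulae of Theorem \ref{thm:clu} and the lemmas already established.
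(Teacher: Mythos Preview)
Your proposal is correct and follows essentially the same route as the paper: exhibit the quantum seed $(X_n,X_{n+1},Y_0,Y_1)$ with the displayed matrices $L$ and $B$, verify that the exchange relation of Lemma~\ref{qexch} rewrites as the Berenstein--Zelevinsky relation (\ref{fin}), and then identify the two algebras by the double inclusion ``generated by $X_0,\dots,X_3$'' versus ``contains every $X_m$'', using that the straightening relations have coefficients in $\mathbb{Z}[q,q^{-1}]$. If anything you are more scrupulous than the paper, which does not spell out the compatibility check $B^{T}L=(D\mid 0)$, the Ore property, or the matrix-mutation bookkeeping you flag as the main obstacle; the paper simply exhibits one exchange step and declares the generation argument in a single sentence.
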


\end{document}